\crefname{hypothesis}{Hypothesis}{Hypotheses}
\Crefname{ALC@unique}{Line}{Lines}
\newcommand{\xmark}{\ding{55}}%
\crefname{figure}{Fig.}{Figs.}
\Crefname{figure}{Fig.}{Figs.}
\colorlet{texcscolor}{blue!50!black}
\colorlet{texemcolor}{red!70!black}
\colorlet{texpreamble}{red!70!black}
\colorlet{codebackground}{black!25!white!25}
\newcommand{\kibitz}[2]{\ifnum\Comments=1\textcolor{#1}{#2}\fi}
\newcommand{\RZ}[1]    {\kibitz{teal}   {[RZ: #1]}}
\newcommand{\m}{}
\DeclareMathOperator{\Tr}{\mathrm{Tr}}
\DeclareMathOperator{\D}{\Delta}
\DeclareMathOperator{\St}{\mathrm{St}}
\newtheorem{thm}{Theorem}[section]
\newtheorem{lem}[thm]{Lemma}
\newtheorem{rem}[thm]{Remark}
\newtheorem{prob}[thm]{Problem}
\newtheorem{cond}[thm]{Condition}
\def \new {}
\def \newrev {}
\lstdefinestyle{siamlatex}{%
  style=tcblatex,
  texcsstyle=*\color{texcscolor},
  texcsstyle=[2]\color{texemcolor},
  keywordstyle=[2]\color{texemcolor},
  moretexcs={cref,Cref,maketitle,mathcal,text,headers,email,url},
}
\DeclareTotalTCBox{\code}{ v O{} }
{ 
  fontupper=\ttfamily\color{black},
  nobeforeafter,
  tcbox raise base,
  colback=codebackground,colframe=white,
  top=0pt,bottom=0pt,left=0mm,right=0mm,
  leftrule=0pt,rightrule=0pt,toprule=0mm,bottomrule=0mm,
  boxsep=0.5mm,
  #2}{#1}
\patchcmd\newpage{\vfil}{}{}{}
\title{An efficient algorithm for the Riemannian logarithm on the Stiefel manifold for a family of Riemannian metrics}
\author{Simon Mataigne\thanks{UCLouvain, ICTEAM, Louvain-la-Neuve, Belgium. Simon Mataigne is a Research Fellow of the Fonds de la Recherche Scientifique - FNRS. \email{simon.mataigne@uclouvain.be}}
\and Ralf Zimmermann \thanks{University of Southern Denmark, Department of Mathematics and Computer Science,
Odense, Denmark.
\email{zimmermann@imada.sdu.dk}}
\and Nina Miolane\thanks{UC Santa Barbara, Electrical and Computer Engineering, Santa Barbara, CA. Nina Miolane acknowledges funding from the NSF grant 2313150. \email{ninamiolane@ucsb.edu}}}
\begin{document}
\maketitle


\begin{abstract}
Since the popularization of the Stiefel manifold for numerical applications in 1998 in a seminal paper from Edelman et al., it has been exhibited to be a key to solve many problems from optimization, statistics and machine learning. In 2021, H\"uper et al.~proposed a one-parameter family of Riemannian metrics on the Stiefel manifold, subsuming the well-known Euclidean and canonical metrics. Since then, several methods have been proposed to obtain a candidate for the Riemannian logarithm given any metric from the family.  Most of these methods are based on the shooting method or rely on optimization approaches. For the canonical metric, Zimmermann proposed in 2017 a particularly efficient method based on a pure matrix-algebraic approach. In this paper, we derive a generalization of this algorithm 
that works for the one-parameter family of Riemannian metrics. The algorithm is proposed in two versions, termed backward and forward, for which we prove that it conserves the local linear convergence previously exhibited in Zimmermann's algorithm for the canonical metric.
\end{abstract}

\begin{keywords}
Stiefel manifold, Riemannian logarithm, geodesic distance, Riemannian metric
\end{keywords}
\begin{MSCcodes}
15B10, 15B57, 53Z50, 65B99, 	15A16
\end{MSCcodes}
\section{Introduction} 
The field of statistics on manifolds has experienced significant growth in recent years, driven by a multitude of applications involving manifold-valued data | such as  orthogonal frames, subspaces, fixed-rank matrices, diffusion tensors 
or shapes | that need to undergo processes like denoising, resampling, extrapolation, compression, clustering, or classification (see, e.g., \cite{LeBrigant2018, Jung2012AnalysisSpheres, Kent2005UsingStructure, Pennec2006b}). Given a Riemannian manifold $\mathcal{M}$, both statistical and numerical applications often require the computation of distances between data points $U$ and $\widetilde{U} \in \mathcal{M}$~\cite{frechet1948}. Calculating this distance relies on an oracle that provides a minimal geodesic between the two points. A  minimal geodesic is a curve whose length corresponds to the Riemannian distance between $U$ and $\widetilde{U}$.  Most geometries lack a known closed-form expression for the minimal geodesic between any two points. Consequently, algorithms must be developed for these geometries to yield a candidate for the minimal geodesic. The Stiefel manifold falls into this category. It has gained popularity since the publication of the seminal paper \cite{EdelmanArias98} by Edelman et al. This manifold finds applications in various fields, including statistics \cite{ChakrabortyVemuri2019}, optimization \cite{EdelmanArias98}, and deep learning \cite{GaoVaryAblinAbsil2022,Huang_Liu_Lang_Yu_Wang_Li_2018}. 
The implementation of the Stiefel manifold is notably available in software packages such as \href{https://geomstats.github.io/}{\texttt{Geomstats}} \cite{JMLR:v21:19-027}, \href{https://www.manopt.org/}{\texttt{Manopt}} \cite{manopt}, \href{https://manoptjl.org/stable/}{\texttt{Manopt.jl}} \cite{Bergmann:2022}.

It has been demonstrated in \cite{ZimmermannRalf22} that the Stiefel manifold, equipped with any Riemannian metric from the one-parameter family introduced in \cite{Huper2021}, features at least one minimal geodesic between any two points. However, computing \emph{any} geodesic between two given points on the Stiefel manifold is a challenging task, known as the \emph{geodesic endpoint problem}. Several research efforts have proposed numerical methods to address this issue \cite{Nguyengeodesics,RentmeestersQ, sutti2023shooting, Zimmermann17, ZimmermannRalf22}. Unfortunately, none of these methods guarantees the computation of a minimal geodesic, referred to as the \emph{logarithm problem}.  

The one-parameter family of metrics introduced in \cite{Huper2021} includes the well-known Euclidean and canonical metrics \cite{EdelmanArias98}. In \cite{nguyen2022curvature}, it was observed that this family of metrics can be considered as metrics arising from a Cheeger deformation. Cheeger deformation metrics were first introduced in \cite{Cheeger1973} and have since been used in Riemannian geometry to construct metrics with special features, e.g., non-negative curvature, or Einstein metrics. We refer to \cite{nguyen2022curvature} for further details and additional references.
Numerical algorithms for computing geodesics and Riemannian normal coordinates under this family of metrics were introduced in \cite{ZimmermannRalf22} and \cite{Nguyengeodesics}. 
The case of the canonical metric allowed for special treatment, which was exploited in \cite[Algorithm~4]{ZimmermannRalf22} and earlier in \cite{Zimmermann17}. The associated algorithm has proven local linear convergence. In the comparative study of \cite{ZimmermannRalf22}, it turned to be the most efficient and robust numerical approach. It was therefore a shortcoming that \cite[Algorithm~4]{ZimmermannRalf22} was only designed to work with the canonical metric.

\paragraph{Original contribution}
In this paper, we generalize \cite[Algorithm~4]{ZimmermannRalf22} to the family of metrics introduced in \cite{Huper2021}. The algorithm is provided in two versions, termed \emph{backward} and \emph{forward}. For the backward iteration, we show the local linear convergence of the algorithm and we obtain an explicit expression for the convergence rate, generalizing the one obtained in \cite[Proposition~7]{ZimmermannRalf22}. However, performing backward iterations require solving a nonlinear matrix equation. To alleviate the computational costs, we introduce three types of \emph{forward} iterations that avoid solving this nonlinear matrix equation. These forward iterations preserve the local linear convergence of the algorithm. 
\paragraph{Reproducibility statement} All the codes written to produce results and figures are available at the address \url{https://github.com/smataigne/StiefelLog.jl}.
\paragraph{Organization}
The paper is structured as follows. We start by recalling the necessary background on the Stiefel manifold in~\cref{sec:preliminaries}. In~\cref{sec:Zimmermannsalgorithm}, we recap
\cite[Algorithm~4]{ZimmermannRalf22} and its convergence result. We generalize the approach to the family of metrics in~\cref{sec:generalizedalgorithm}, where we provide a convergence analysis for the backward and forward iterations. Then, we compare the performance of the new algorithms with each other in~\cref{sec:forwardperformance}. Finally, the convergence radius  and the benchmark with methods from the state of the art is carried out in~\cref{sec:performance}.

\paragraph{Notations} For $p>0$, we denote the $p\times p$ identity matrix by $\m{I}_p$. For $n>p>0$, we define \begin{equation*}
    \m{I}_{n\times p }\coloneq\begin{bmatrix}
        \m{I}_p\\
        \m{0}_{(n-p)\times p}
    \end{bmatrix}\text{ and } \m{I}_{p\times n} \coloneq \m{I}_{n\times p }^T.
\end{equation*}
$\mathrm{Skew}(p)$ is the set of $p\times p$ skew-symmetric matrices ($\m{A}=-\m{A}^T)$ and the orthogonal group of $p\times p$ matrices is written as \begin{equation*}
    \mathrm{O}(p) \coloneq\{\m{Q}\in\mathbb{R}^{p\times p}\ | \ \m{Q}^T\m{Q}=\m{I}_p\}.
\end{equation*}
The special orthogonal group $\mathrm{SO}(p)$ is the subset of matrices of $ \mathrm{O}(p)$ with positive unit determinant. An orthogonal completion $\m{U}_\perp$ of $\m{U}\in\mathbb{R}^{n\times p}$ is $\m{U}_\perp\in\mathbb{R}^{n\times (n-p)}$ such that $\begin{bmatrix}
    \m{U}&\m{U}_\perp
\end{bmatrix}\in \mathrm{O}(n)$. Throughout this paper, $\exp$ and $\log$ always denote the matrix exponential and the principal matrix logarithm. The Riemannian exponential and logarithm are written with capitals, $\text{Exp}$ and $\text{Log}$. Finally, $\|\cdot\|_2$ denotes the spectral matrix norm.

\section{Background on the Stiefel manifold}\label{sec:preliminaries}
 The main references for this section are \cite{AbsMahSep2008,EdelmanArias98,ZimmermannRalf22}. Given integers $n \geq p > 0$, the Stiefel manifold of orthonormal $p$-frames in $\mathbb{R}^n$ is defined as
\begin{equation}
    \mathrm{St}(n,p)\coloneq\{ \m{U}\in \mathbb{R}^{n\times p}\  |\ \m{U}^T\m{U} = \m{I}_p, \ n\geq p \}.
\end{equation}
If $n=p$, the Stiefel manifold becomes the orthogonal group, $\mathrm{St}(n,n)=\mathrm{O}(n)$. This case allows for special treatment and is extensively studied. In particular, the minimal geodesics are known in closed form. Therefore, we restrict our considerations to $n>p$. The Stiefel manifold is a differentiable manifold of dimension $np-\frac{p(p+1)}{2}$~\cite{AbsMahSep2008}. The tangent space of $\mathrm{St}(n,p)$ at a point $\m{U}$ can be written as
\begin{align*}
    T_\m{U} \mathrm{St}(n,p) &=\{\m{\Delta}\in\mathbb{R}^{n\times p}\ | \ \m{U}^T\m{\Delta}+\m{\Delta}^T\m{U}=\m{0} \}.
\end{align*}
It follows that for all $\m{\Delta}\in T_\m{U} \mathrm{St}(n,p)$, we can write $\m{\Delta}=\m{UA}  + \m{U}_\perp \m{B}$ where $\m{A}\in \mathrm{Skew}(p)$, $\m{B} \in \mathbb{R}^{(n-p)\times p}$ and $\m{U}_\perp\in \mathrm{St}(n,n-p)$ is an orthogonal complement of $\m{U}$. The concept of tangent space is illustrated in~\cref{fig:exp_log}. Notice that for a given $\m{\Delta}\in T_\m{U} \mathrm{St}(n,p)$, $\m{A}$ is uniquely defined while $\m{B}$ depends on the chosen completion $\m{U}_\perp$. Indeed $\m{U}_\perp \m{B}=(\m{U}_\perp \m{R})(\m{R}^T \m{B})$ for all $\m{R}\in{\new \mathrm{O}(n-p)}$.
(For the initiated reader, we mention that selecting a specific completion $\m{U}_\perp$ corresponds to lifting the tangent vector $\Delta$ to a specific horizontal space. We omit the details.)
\subsection{Metrics and distances} 
Let $\mathcal{M}$ be a differentiable manifold. A Riemannian metric on $\mathcal{M}$ is a family {\new $\{\langle\cdot,\cdot\rangle^x:\mathrm{T}_x\mathcal{M}\times\mathrm{T}_x\mathcal{M}\mapsto\mathbb{R}\}_{x\in\mathcal{M}}$ of symmetric positive definite bi-linear forms that depends smoothly on the location $x\in\mathcal{M}$}.
In practice, the input arguments of the metric encode its dependency on $x$, so that we can write $\langle\cdot,\cdot\rangle$ without ambiguity. Edelman et al.~\cite{EdelmanArias98} introduced two natural metrics on the Stiefel manifold, called the \emph{Euclidean} and the \emph{canonical} metric, respectively. These two metrics are subsumed by the family of metrics introduced in~\cite{Huper2021}. For convenience, we use another parameterization of this family  and define the $\beta$-metric\footnote{\cite{absil2024ultimate} and \cite{Nguyengeodesics} also used this more convenient parameterization.} with $\beta>0$ as follows. For all $\m{\Delta},\widetilde{\m{\Delta}}\in\mathrm{T}_\m{U}\mathrm{St}(n,p)$, 
\begin{equation*}
    \langle\m{\Delta},\widetilde{\m{\Delta}}\rangle_\beta \coloneq \Tr \m{\Delta}^T(\m{I}_n-(1-\beta)\m{UU}^T)\widetilde{\m{\Delta}}=\beta \Tr \m{A}^T\widetilde{\m{A}} +\Tr \m{B}^T\widetilde{\m{B}}.
\end{equation*}
The Euclidean and canonical metrics correspond to $\beta = 1$ and $\beta = \frac{1}{2}$ respectively. Therefore, we are particularly interested in $\beta\in\left[\frac{1}{2},1\right]$ and our experiments will focus on this interval. The Euclidean metric is inherited from the ambient Euclidean space $\mathbb{R}^{n\times p}$ while the canonical metric is inherited from the quotient structure $\mathrm{St}(n,p)=\mathrm{SO}(n)/\mathrm{SO}(n-p)$~\cite{EdelmanArias98}. The norm induced by any $\beta$-metric is $\|\m{\Delta} \|_\beta= \sqrt{\langle \m{\Delta},\m{\Delta}\rangle_\beta}$, and the length of a continuously differentiable curve $\gamma:[0,1]\mapsto\mathrm{St}(n,p)$ is given by 
\begin{equation}\label{eq:length}
    l_\beta(\gamma) = \int_0^1 \|\dot{\gamma}(t)\|_\beta \ \mathrm{d}t,
\end{equation}
where $\dot{\gamma}$ denotes the time derivative of $\gamma$. For all $\m{U},\widetilde{\m{U}}\in \mathrm{St}(n,p)$, we obtain an induced distance function~\cite[Proposition~1.1]{sakai1996riemannian} (also termed \emph{Riemannian distance}) defined as 
\begin{equation}\label{eq:geodesicdistance}
     d_\beta(\m{U},\widetilde{\m{U}})=\inf\{l_\beta(\gamma)\ |\  \gamma(0)=\m{U},\ \gamma(1) = \widetilde{\m{U}}\}.
 \end{equation}
 Since $\beta$-geodesics exist for all times \cite{Huper2021}, the Hopf-Rinow theorem ensures that a curve achieves the minimal distance: the ``$\inf$'' in~\eqref{eq:geodesicdistance} is a ``$\min$'' when $\mathcal{M}=\mathrm{St}(n,p)$.
 \begin{figure}
     \centering
     \includegraphics[width = 7cm]{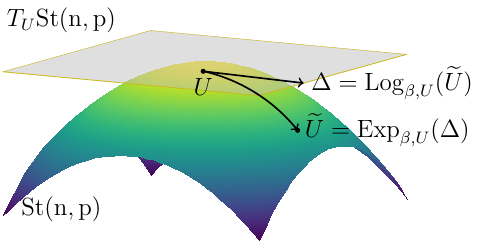}
     \vspace{-0.2cm}
     \caption{Conceptual illustration of the Stiefel manifold $\mathrm{St}(n,p)$, the tangent space $T_\m{U}\mathrm{St}(n,p)$, the exponential map $\mathrm{Exp}_{\beta,\m{U}}(\m{\Delta})$ and the logarithmic map $\mathrm{Log}_{\beta,\m{U}}(\widetilde{U})$ (see Section~\cref{subsec:exponential}).}
     \label{fig:exp_log}
     \vspace{-0.5cm}
 \end{figure}
 
 Riemannian metrics act on tangent vectors, {\new  while} the Riemannian distance is for points on the manifold. Since $\mathrm{St}(n,p)$ is embedded in $\mathbb{R}^{n\times p}$, the distance between $U,\tilde U\in\mathrm{St}(n,p)$ can also be measured in the ambient space. An upper bound for this distance in the Frobenius norm is
 \[
    \|U-\tilde U\|_F \leq \|U\|_F + \|\tilde U\|_F = 2\sqrt{\mathrm{Tr}(I_p)} = 2\sqrt{p},
 \]
 and is referred to as the {\em Frobenius diameter} of the Stiefel manifold.
 \subsection{The Riemannian exponential and logarithm}\label{subsec:exponential}  Consider $\mathrm{St}(n,p)$ endowed with $\langle\cdot,\cdot\rangle_\beta$, written $\left(\mathrm{St}(n,p),\langle\cdot,\cdot\rangle_\beta\right)$. Zimmermann and Hüper~\cite{ZimmermannRalf22} showed that the \emph{Riemannian exponential} $\mathrm{Exp}_{\beta,\m{U}}:T_\m{U}\mathrm{St}(n,p)\mapsto\mathrm{St}(n,p)$, i.e., the function that maps $\m{\Delta}\in T_\m{U}\mathrm{St}(n,p)$ to the point reached at unit time by the geodesic with starting point $\m{U}$ and initial velocity $\m{\Delta}$~(see, e.g., \cite[Chap.~II,~Sec.~2]{sakai1996riemannian}),  can be expressed according to~\cref{thm:geodesics}. The mode of operation of the Riemannian exponential is illustrated in~\cref{fig:exp_log}.
 \begin{thm}{\textsc{The Riemannian exponential} \cite[Equation~10]{ZimmermannRalf22}.}\label{thm:geodesics}
    \it
    For all $\m{U}\in \mathrm{St}(n,p)$ and $\m{\Delta} \in T_\m{U}\mathrm{St}(n,p)$, we have
    \begin{equation}\label{eq:paramgeogeneral}
        \mathrm{Exp}_{\beta,\m{U}}(\m{\Delta}) = \begin{bmatrix}
            \m{U} & \ \m{Q}
        \end{bmatrix}\exp\left(\begin{bmatrix}
            2\beta \m{A}& -\m{B}^T\\
            \m{B} & \m{0}
        \end{bmatrix}\right)\m{I}_{n\times p}\exp\left((1-2\beta)\m{A}\right),
    \end{equation}
    where $\m{A} = \m{U}^T \m{\Delta}\in \mathrm{Skew}(p)$ and $\m{Q}\m{B} = (\m{I}-\m{U}\m{U}^T)\m{\Delta}\in\mathbb{R}^{n\times p}$ is any matrix decomposition where $\m{Q}\in\mathrm{St}(n,n-p)$ with $\m{Q}^T \m{U}=\m{0}$ and $\m{B}\in\mathbb{R}^{(n-p)\times p}$.
\end{thm}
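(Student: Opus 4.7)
The plan is to verify that the time-parametrized curve
\[
\gamma(t) \coloneq \begin{bmatrix}\m{U} & \m{Q}\end{bmatrix}\exp(t\m{M})\,\m{I}_{n\times p}\exp\bigl(t(1-2\beta)\m{A}\bigr), \qquad \m{M} \coloneq \begin{bmatrix}2\beta\m{A}&-\m{B}^T\\\m{B}&\m{0}\end{bmatrix},
\]
is the $\beta$-geodesic satisfying $\gamma(0)=\m{U}$ and $\dot\gamma(0)=\m{\Delta}$; the claim then follows by setting $t=1$. Three items must be checked: the initial conditions, the manifold constraint $\gamma(t)\in\mathrm{St}(n,p)$, and the geodesic equation $\nabla^\beta_{\dot\gamma}\dot\gamma=0$.

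The first two are direct. Evaluating at $t=0$ gives $\gamma(0)=[\m{U}\ \m{Q}]\m{I}_{n\times p}=\m{U}$. Differentiating via the product rule and evaluating at $t=0$ yields
\[
\dot\gamma(0) = [\m{U}\ \m{Q}]\m{M}\m{I}_{n\times p} + \m{U}(1-2\beta)\m{A} = 2\beta\m{U}\m{A}+\m{Q}\m{B}+(1-2\beta)\m{U}\m{A} = \m{U}\m{A}+\m{Q}\m{B},
\]
which matches the decomposition $\m{\Delta}=\m{U}\m{U}^T\m{\Delta}+(\m{I}-\m{U}\m{U}^T)\m{\Delta}=\m{U}\m{A}+\m{Q}\m{B}$. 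Since both $\m{M}$ and $(1-2\beta)\m{A}$ are skew-symmetric, their matrix exponentials are orthogonal, and together with $[\m{U}\ \m{Q}]\in\mathrm{O}(n)$ this gives $\gamma(t)^T\gamma(t)=\m{I}_p$ for all $t$, so $\gamma$ remains on $\mathrm{St}(n,p)$.

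The main obstacle is the geodesic equation. The cleanest route exploits the quotient realization $\mathrm{St}(n,p)\simeq\mathrm{SO}(n)/\mathrm{SO}(n-p)$: as observed in~\cite{nguyen2022curvature}, the $\beta$-metric is a Cheeger deformation, equivalently the submersion metric descending from a specific left-invariant metric $g^{\mathrm{SO}}_\beta$ on $\mathrm{SO}(n)$ that rescales the vertical (fiber) distribution. The idea is to lift $\gamma$ to the curve
\[
\widetilde\gamma(t) \coloneq [\m{U}\ \m{Q}]\exp(t\m{M})\exp\!\left(t(1-2\beta)\begin{bmatrix}\m{A}&\m{0}\\\m{0}&\m{0}\end{bmatrix}\right)\in\mathrm{SO}(n),
\]
which satisfies $\widetilde\gamma(t)\m{I}_{n\times p}=\gamma(t)$ (the second exponential is block-diagonal and commutes through $\m{I}_{n\times p}$) and whose velocity at $t=0$ has zero bottom-right block, hence is horizontal. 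A short block-matrix computation, relying only on the structure of $\m{M}$ and its commutator with $\mathrm{diag}(\m{A},\m{0})$, then verifies the Euler--Arnold equation for $g^{\mathrm{SO}}_\beta$; the standard horizontal-lift lemma for Riemannian submersions transfers the geodesic property to $\gamma$ downstairs. A more pedestrian alternative would be to compute the Levi--Civita connection of $\langle\cdot,\cdot\rangle_\beta$ directly from the Koszul formula and check $\nabla^\beta_{\dot\gamma}\dot\gamma=0$ by projecting the ambient second derivative of $\gamma$ onto $T_{\gamma(t)}\mathrm{St}(n,p)$; this avoids the bundle machinery at the price of a considerably longer calculation.
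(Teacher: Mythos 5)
The paper does not prove this statement; it is cited verbatim from Zimmermann and H\"uper (2022, Eq.~10), which in turn rests on the variational derivation of H\"uper, Markina and Silva \cite{Huper2021}. Your plan --- check $\gamma(0)=U$, $\dot\gamma(0)=\Delta$, $\gamma(t)\in\mathrm{St}(n,p)$, and the geodesic equation --- is the right skeleton, and the first three items are carried out correctly: the product-rule computation of $\dot\gamma(0)=UA+QB=\Delta$ and the observation that $\exp(tM)$, $\exp(t(1-2\beta)A)$ and $[U\ Q]$ are orthogonal are all fine.

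The gap is in the fourth item, which is the entire content of the theorem. You assert that ``a short block-matrix computation\ldots verifies the Euler--Arnold equation for $g^{\mathrm{SO}}_\beta$,'' but you never write that computation down, and it is not short. To close this you would need to (i) pin down the specific left-invariant metric on $\mathrm{SO}(n)$ that submerses onto $\langle\cdot,\cdot\rangle_\beta$, i.e.\ exhibit the inertia operator that rescales the $\mathfrak{so}(p)$ and $\mathfrak{so}(n-p)$ blocks with $\beta$-dependent weights; (ii) compute the body velocity $\nu(t)=\widetilde\gamma(t)^{-1}\dot{\widetilde\gamma}(t)=\exp(-tY)M\exp(tY)+Y$ with $Y=(1-2\beta)\,\mathrm{diag}(A,0)$ and verify $\frac{d}{dt}\mathcal{I}\nu=\mathrm{ad}^*_\nu(\mathcal{I}\nu)$ block by block --- this is where the factor $1-2\beta$ must emerge from cross-terms, and it is precisely the step your proposal skips; and (iii) justify that the horizontal distribution is $\beta$-independent (true, because the metric is block-diagonal and the vertical space $\{0\}\oplus\mathfrak{so}(n-p)$ is one of the blocks, but this needs a sentence before you may conclude ``zero bottom-right block $\Rightarrow$ horizontal''). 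Your ``pedestrian alternative'' via the Koszul formula is likewise only named, not executed. As written, the proposal correctly identifies the strategy used in \cite{Huper2021,nguyen2022curvature} but stops exactly where the actual proof begins.
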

If $n>2p$, we can always reduce the decomposition of $\m{\Delta}$ such that $\m{Q}\in\mathrm{St}(n,p)$ and $\m{B}\in\mathbb{R}^{p\times p}$~\cite{RentmeestersQ, ZimmermannRalf22}.
The first matrix exponential of \eqref{eq:paramgeogeneral} belongs then to $\mathbb{R}^{2p\times 2p}$ instead of $\mathbb{R}^{n\times n}$. {\newrev This property yields significant computational savings if $n\gg p$.} Given $U, \widetilde{U}\in \mathrm{St}(n,p)$, the Riemannian logarithm $\mathrm{Log}_{\beta,\m{U}}(\widetilde{\m{U}})$ is the function returning the set of all minimal-norm tangent vectors $\Delta\in T_U\mathrm{St}(n,p)$ such that $\mathrm{Exp}_{\beta,\m{U}}(\m{\Delta})=\widetilde{U}$. Locally, the Riemannian logarithm is the inverse of the exponential.
    \begin{prob}{\textsc{The Logarithm problem}.}\label{prob:truelogproblem}
    \it
    Let $\m{U},\m{\widetilde{U}}\in \mathrm{St}(n,p)$ and $\beta>0$. The Riemannian logarithm $\mathrm{Log}_{\beta,\m{U}}(\widetilde{\m{U}})$ returns all $\m{\Delta}\in T_{\m{U}}\mathrm{St}(n,p)$ such that
    \begin{equation*}
         \mathrm{Exp}_{\beta,\m{U}}(\m{\Delta})=\widetilde{\m{U}} \text{ and } \|\m{\Delta}\|_\beta = d_\beta(\m{U},\widetilde{\m{U}}).
    \end{equation*}
    The curves $[0,1]\ni t\mapsto \mathrm{Exp}_{\beta,\m{U}}(t\m{\Delta})$ are then called \emph{minimal geodesics}.
\end{prob}
Because the Stiefel manifold is complete, the Hopf-Rinow theorem \cite[Chap.~7, Thm.~2.8]{DoCarmo2013riemannian} ensures the existence of at least one minimal geodesic between any two points on the Stiefel manifold. However, on compact Riemannian manifolds, every geodesic has a designated \emph{cut time} $t^*$ \cite{sakai1996riemannian}: the geodesic stays minimal as long as $t \leq t^*$, not beyond. The point $\mathrm{Exp}_{\beta,\m{U}}(t^*\m{\Delta})$ is termed the \emph{cut point}. Consequently, when a geodesic between two points is established, the shooting direction is a Riemannian logarithm if and only if the destination is reached before the cut point. 
In \cite[Thm.~3]{ZimmermannRalf22}, it is emphasized that the geodesic endpoint problem on the Stiefel manifold boils down to solving a nonlinear matrix problem. Very recent works \cite{absil2024ultimate, stoye2024injectivity} proposed a thin interval for the injectivity  radius $\mathrm{inj}_{\mathrm{St}(n,p)}$, i.e., the distance to the nearest cut point. In particular for $\beta =\frac{1}{2}$, they showed that $0.894\pi<\mathrm{inj}_{\mathrm{St}(n,p)}<0.914\pi$. Within the injectivity radius,  any solution to the geodesic endpoint problem gives the unique solution to~\cref{prob:truelogproblem}, namely, \emph{the} Riemannian logarithm.

\section{State of the art}\label{sec:Zimmermannsalgorithm}  \cite{RentmeestersQ,Zimmermann17}~introduced an efficient method to compute a geodesic between any two given points $\m{U},\m{\widetilde{U}}\in \mathrm{St}(n,p)$, but only in the case of the canonical metric ($\beta=\frac{1}{2}$) and $n\geq2p$. The algorithm was enhanced in \cite{ZimmermannRalf22} to obtain a better convergence rate. Its pseudo-code is given by~\cref{alg:Zimmermannsalgorithm}. Previous experiments~\cite[Table~1]{ZimmermannRalf22} highlighted the better performance of~\cref{alg:Zimmermannsalgorithm} compared to the shooting method, introduced first in \cite{BrynerDarshan17}. For the $\beta$-metric, \cite{Nguyengeodesics} also proposed a L-BFGS method inspired from the shooting method. The L-BFGS method was however shown to converge slower than the shooting method \cite[Table~2]{Nguyengeodesics}. This observation provides a strong incentive to generalize~\cref{alg:Zimmermannsalgorithm} to the parameterized family of $\beta$-metrics. 

\cref{alg:Zimmermannsalgorithm} finds a vector $\Delta$ satisfying $\widetilde{U} = \mathrm{Exp}_{\beta,U}(\Delta)$ when $\beta=\frac{1}{2}$. In this case,~\cref{prob:truelogproblem} simplifies and consists of finding matrices $\m{A}\in\mathrm{Skew}(p)$ and $\m{B}\in \mathbb{R}^{p \times p}$ such that 
\begin{equation}\label{eq:nonlinearproblem}
    \begin{bmatrix}
        \m{M}\\
        \m{N}
    \end{bmatrix} = \exp\begin{bmatrix}
        \m{A}&-\m{B}^T\\
        \m{B}&\m{0}
    \end{bmatrix}\m{I}_{2p\times p},
\end{equation}
where $\m{M}:=\m{U}^T\widetilde{U}$, $\m{N}:=\m{Q}^T\widetilde{U}$ with $\m{Q}\in\mathrm{St}(n,p)$ {\new such that $\mathrm{col}([U\ \widetilde{U}])\subseteq\mathrm{col}([U\ Q])$} and $\m{Q}^T\m{U}=\m{0}$. Equation \eqref{eq:nonlinearproblem} is solved iteratively by computing a sequence of matrices $\{\m{A}_k,\m{B}_k,\m{C}_k\}_{k\in\mathbb{N}}$ by
\begin{equation}\label{eq:sequenceforZimmermann}
    \begin{bmatrix}
        \m{A}_k&-\m{B}_k^T\\
        \m{B}_k&\m{C}_k
    \end{bmatrix} \coloneq\log \m{V}_k\in\mathrm{Skew}(2p)\text{ with } \m{V}_k\coloneq\begin{bmatrix}
        \m{M}&\m{O}_k\\
        \m{N}&\m{P}_k
    \end{bmatrix}\in \mathrm{SO}(2p).
\end{equation}
$O_0,P_0$ are chosen such that $V_0\in\mathrm{SO}(2p)$ and $\m{V}_k$ is updated to have $\lim_{k\rightarrow\infty} \|\m{C}_k\|_2 = 0$. \cref{thm:convergenceZimmermann} below shows that taking $\m{V}_{k+1} = \m{V}_k\begin{bmatrix}
    \m{I}_p&\m{0}\\
    \m{0}&\exp(\m{\Gamma}_k)
\end{bmatrix}$, where $\m{\Gamma}_k$ solves the \emph{Sylvester equation}
\begin{equation}\label{eq:sylvesterfirst}
    \m{S\Gamma}_k+\m{\Gamma}_k\m{S} = \m{C}_k \text{ with } \m{S}\coloneq\frac{1}{12}\m{B}_k\m{B}_k^T-\frac{\m{I}_p}{2},
\end{equation} yields local linear convergence, i.e., $\|\m{C}_{k+1}\|_2 \leq a \|\m{C}_k\|_2+\mathcal{O}(\|\m{C}_k\|_2^2)$ with $a<1$. Ultimately, \cref{alg:Zimmermannsalgorithm} outputs a geodesic $\gamma:[0,1]\mapsto\St(n,p)$ between $\m{U}$ and $\widetilde{U}$ defined by
\begin{equation}
    \gamma(t)\coloneq\mathrm{Exp}_{\frac{1}{2}, \m{U}}(t\m{\Delta})= \begin{bmatrix}
        \m{U}&\m{Q}
    \end{bmatrix}\exp \left(t\begin{bmatrix}
        \m{A}_\infty&-\m{B}^T_\infty\\
        \m{B}_\infty&\m{0}
    \end{bmatrix}\right)\m{I}_{2p \times p}.
\end{equation}
\cref{alg:Zimmermannsalgorithm} possesses two strengths. First, $\{\m{A}_k,\m{B}_k,\m{C}_k\}$ provides a curve between $\m{U}$ and $\widetilde{U}$ at any iteration $k$, which becomes a (numerical) geodesic at convergence when $\|\m{C}_k\|_2<\varepsilon$ for some tolerance $\varepsilon>0$. Second, unlike the shooting method, no time-discretization of the geodesic is needed. This is a strong advantage since i) the discretization is computationally expensive: a Riemannian exponential and an approximate parallel transport have to be computed at every discretized point along the geodesic and ii) the level of discretization is a user-parameter that has to be tuned, or guessed. 
On the other hand, the shooting method involves computing the matrix exponential of skew-symmetric matrices. At this day, it can be done more efficiently | such as in \href{https://github.com/JuliaLinearAlgebra/SkewLinearAlgebra.jl}{\texttt{SkewLinearAlgebra.jl}} | than computing the matrix logarithm in \eqref{eq:sequenceforZimmermann}. Nonetheless \cite[Table~1]{ZimmermannRalf22} observed that~\cref{alg:Zimmermannsalgorithm} runs faster due to its better convergence rate. It was thus a deficiency that the most efficient method among the considered ones could not be generalized to the $\beta$-metrics. We close this gap in the next sections.
\begin{algorithm}
     \caption{\cite[Algorithm~4]{ZimmermannRalf22} Improved algebraic Stiefel logarithm for the canonical metric ($\beta=\frac{1}{2}$).} \label{alg:Zimmermannsalgorithm}
    \begin{algorithmic}[1]
        \STATE \textbf{INPUT:} Given $\m{U},\widetilde{U}\in\St(n,p)$, $n\geq2p$ and $\varepsilon>0$, compute:
        \STATE Set $\m{M} = \m{U}^T\widetilde{U}$.
        \STATE Compute $\widehat{Q}\widehat{N} = (\m{I} - \m{U}\m{U}^T)\widetilde{U}$.\hspace{2.2cm} \#where $\widehat{Q}\in\mathrm{St}(n,p)$ and $\widehat{Q}^T U = 0$.
        \STATE Build $\m{V}_0 = \begin{bmatrix}
            \m{M}&\m{O}_0\\
            \m{N}&\m{P}_0
        \end{bmatrix}\in\mathrm{SO}(2p),\  Q\in\mathrm{St}(n,p)$. \hspace{-0.08cm} \#see~\cref{app:initialization} for $\m{N,O_0,P_0, Q}$.
        {\FOR{$k = 0, 1, ...$}
            \STATE Compute $\begin{bmatrix}
                \m{A}_{k} & -\m{B}_{k}^T\\
                \m{B}_{k}& \m{C}_{k}
            \end{bmatrix} = \log\m{V}_k$.
            \IF{$\|\m{C}_{k}\|\leq \varepsilon$}
                \STATE Break.
            \ENDIF
            \STATE Compute $\m{\Gamma}_k$ solving $\m{\Gamma}_k\m{S}+\m{S\Gamma}_k = \m{C}_k$ where $\m{S}\coloneq\frac{1}{12}\m{B}_k\m{B}_k^T-\frac{\m{I}_p}{2}$.
            \STATE Update $\m{V}_{k+1} = \m{V}_k\begin{bmatrix}
                \m{I}_p&0\\
                0&\exp(\m{\Gamma}_k)
            \end{bmatrix}$.
        \ENDFOR}
        \RETURN {\new $\m{\Delta} = \m{U}\m{A}_k+\m{QB}_k\in T_U \mathrm{St}(n,p)$.}
    \end{algorithmic}
\end{algorithm}
\begin{rem}\label{rem:rankdefficiency}
    In line 3 of~\cref{alg:Zimmermannsalgorithm}, \cite{Zimmermann17} proposed to compute a thin QR decomposition ${\new \widehat{Q}\widehat{N}} = (\m{I} - \m{U}\m{U}^T)\widetilde{U}$. It is without ambiguity if $(\m{I} - \m{U}\m{U}^T)\widetilde{U}$ has full column rank. However, if $(\m{I} - \m{U}\m{U}^T)\widetilde{U}$ is not full column rank, it is important to ensure ${\new \widehat{Q}}\in\mathrm{St}(n,p)$ with ${\new \widehat{Q}}^T\m{U}=\m{0}$.  Otherwise, \cref{alg:Zimmermannsalgorithm} may fail to find any $\m{\Delta}\in \mathrm{Log}_{\beta,U}(\widetilde{U})$. 
    Indeed, there are provable cases where
    \begin{equation}\label{eq:rankdef}
        \mathrm{rank}((\m{I} - \m{U}\m{U}^T)\Delta)>\mathrm{rank}((\m{I} - \m{U}\m{U}^T)\widetilde{U}). 
    \end{equation}
    We show in~\cref{app:rankdeficiency} that~\eqref{eq:rankdef} can only happen if $\widetilde{U}$ belongs to the cut locus of~$U$.
\end{rem}
The linear convergence result for~\cref{alg:Zimmermannsalgorithm} is given by~\cref{thm:convergenceZimmermann}. In~\cref{sec:convergencebackward}, 
we first generalize the algorithm to work with the parametric family of metrics in form of~\cref{alg:generalizedStiefel}.
The associated generalization of the convergence statement~\cref{thm:convergenceZimmermann} is then~\cref{thm:convergence}, which reduces to the original result for $\beta=\frac{1}{2}$.
\begin{thm}{\cite[Prop.~7]{ZimmermannRalf22}}\label{thm:convergenceZimmermann}
    Given $\m{U},\widetilde{U}\in\mathrm{St}(n,p)$ $(\m{U}\neq\widetilde{U})$ such that~\cref{alg:Zimmermannsalgorithm} converges. Assume further that there is $0<\delta<1$ such that for $\log(\m{V}_k)=
    \begin{bmatrix}
        \m{A}_{k} & -\m{B}_{k}^T\\
        \m{B}_{k}& \m{C}_{k}
    \end{bmatrix}$, it holds $\|\log(\m{V}_k)\|_2<\delta$ throughout the algorithm's iteration loop. Then, for $k$ large enough, it holds
    \begin{equation}
        \|\m{C}_{k+1}\|_2\leq\frac{6}{6-\delta^2}\frac{\delta^4}{1-\delta}\|\m{C}_{k}\|_2+\mathcal{O}(\|\m{C}_{k}\|_2^2).
    \end{equation}
\end{thm}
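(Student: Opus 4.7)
The plan is to analyze the $(2,2)$ block of $\log V_{k+1}$ by expanding it via the Baker--Campbell--Hausdorff (BCH) formula, exploiting the fact that the Sylvester update is designed to annihilate the leading linear-in-$Y$ contribution.

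Set $X = \log V_k = \begin{bmatrix} A_k & -B_k^T \\ B_k & C_k \end{bmatrix}$ and $Y = \begin{bmatrix} 0 & 0 \\ 0 & \Gamma_k \end{bmatrix}$, so that $V_{k+1} = e^X e^Y$. Since $\Gamma_k$ is $\mathcal{O}(\|C_k\|_2)$ (quantified in Step~2 below), I would use the closed form of the first-order-in-$Y$ expansion,
\begin{equation*}
\log(e^X e^Y) = X + \phi(\mathrm{ad}_X)(Y) + \mathcal{O}(\|Y\|_2^2), \qquad \phi(z) = \frac{z}{1-e^{-z}} = 1 + \frac{z}{2} + \frac{z^2}{12} + \sum_{m\geq 2}\frac{B_{2m}}{(2m)!}z^{2m},
\end{equation*}
and take the $(2,2)$ block on both sides. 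Note that replacing $X$ by its ``$C_k = 0$'' part only affects the result at order $\|C_k\|_2^2$, so the linear-in-$C_k$ analysis can be carried out with $X_0 = \begin{bmatrix} A_k & -B_k^T \\ B_k & 0 \end{bmatrix}$ in place of $X$.

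\textbf{Step 1 (first-order cancellation).} A direct block computation shows $[X_0,Y]_{(2,2)} = 0$ and $[X_0,[X_0,Y]]_{(2,2)} = -(B_k B_k^T \Gamma_k + \Gamma_k B_k B_k^T)$. Hence the $(2,2)$-block of the first three terms of $\phi(\mathrm{ad}_{X_0})(Y)$ equals $\Gamma_k - \tfrac{1}{12}(B_k B_k^T \Gamma_k + \Gamma_k B_k B_k^T)$, and the Sylvester equation $\Gamma_k S + S\Gamma_k = C_k$ is exactly equivalent to $\Gamma_k - \tfrac{1}{12}(B_k B_k^T \Gamma_k + \Gamma_k B_k B_k^T) = -C_k$. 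Adding back the $C_k$ from the $(2,2)$ block of $X$, the polynomial part of order $\leq 2$ in $\mathrm{ad}_{X_0}$ contributes $0$ to $C_{k+1}$, up to $\mathcal{O}(\|C_k\|_2^2)$.

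\textbf{Step 2 (Sylvester bound).} Since $\|B_k\|_2 \leq \|X\|_2 < \delta$, the spectrum of $S = \tfrac{1}{12}B_k B_k^T - \tfrac{1}{2}I_p$ lies in $[-\tfrac12,-\tfrac12+\tfrac{\delta^2}{12}]$ and pairwise eigenvalue sums of $S$ lie in $[-1,-1+\tfrac{\delta^2}{6}]$; standard spectral analysis of the Sylvester operator then gives
\begin{equation*}
\|\Gamma_k\|_2 \leq \frac{1}{1 - \delta^2/6}\,\|C_k\|_2 = \frac{6}{6-\delta^2}\|C_k\|_2,
\end{equation*}
which is the first factor in the claimed rate.

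\textbf{Step 3 (tail of the BCH series).} What remains of the first-order-in-$C_k$ contribution to $C_{k+1}$ is the $(2,2)$-block of $R(\mathrm{ad}_{X_0})(Y) := \sum_{m\geq 2}\tfrac{B_{2m}}{(2m)!}(\mathrm{ad}_{X_0})^{2m}(Y)$. I would bound this using $\|(\mathrm{ad}_{X_0})^n Y\|_2 \leq (2\|X_0\|_2)^n \|Y\|_2$ together with the Bernoulli asymptotics $|B_{2m}|/(2m)!\sim 2/(2\pi)^{2m}$, and then exploit the block structure of $Y$ (it has only a $(2,2)$ block, which restricts which nested commutators can contribute) to trade the factors of $2$ for a cleaner geometric bound summing to $\tfrac{\delta^4}{1-\delta}\|\Gamma_k\|_2$.

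\textbf{Step 4 (conclusion).} Combining the cancellation of Step~1, the tail estimate of Step~3, and the Sylvester bound of Step~2,
\begin{equation*}
\|C_{k+1}\|_2 \;\leq\; \frac{\delta^4}{1-\delta}\|\Gamma_k\|_2 + \mathcal{O}(\|C_k\|_2^2) \;\leq\; \frac{6}{6-\delta^2}\frac{\delta^4}{1-\delta}\|C_k\|_2 + \mathcal{O}(\|C_k\|_2^2),
\end{equation*}
which is the required rate. The main obstacle is Step~3: a naive $\|\mathrm{ad}_X\|\leq 2\|X\|$ bound together with Bernoulli asymptotics yields factors of $(2\pi)$, not the desired $(1-\delta)$, so obtaining the tight constant requires careful block-wise accounting that exploits the specific sparsity pattern of $Y$ and the skew-symmetric structure of $X$, rather than treating $\mathrm{ad}_X$ as a black-box operator.
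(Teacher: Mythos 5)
Your approach is genuinely different from the paper's. The paper (both \cite{ZimmermannRalf22} and the generalized proof of \cref{thm:convergence} here, specialized to $\beta=\tfrac12$) expands $\log(e^{X}e^{Y})$ in the \emph{Goldberg word series} $\sum_{l}z_l(X,Y)$, organized by total word length $l$, and bounds the tail $l\geq 5$ using the two facts that every word of length $l\geq 2$ contains at least one $Y$ and that the Goldberg coefficients at each length sum to at most $1$; this directly produces the geometric series $\sum_{l\geq 5}\delta^{l-1}=\tfrac{\delta^4}{1-\delta}$ multiplying $\|Y_k\|_2\leq\tfrac{6}{6-\delta^2}\|C_k\|_2$. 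You instead isolate the first-order-in-$Y$ part via the derivative-of-$\log$ formula $\phi(\mathrm{ad}_X)(Y)$ with $\phi(z)=z/(1-e^{-z})$ and track it to all orders in $\mathrm{ad}_X$; your Step~1 cancellation and Step~2 Sylvester bound are both correct, and the $\mathcal{O}(\|Y\|_2^2)$ remainder is indeed $\mathcal{O}(\|C_k\|_2^2)$ since the Goldberg series converges for $\|X\|_2,\|Y\|_2<1$ (Thompson).

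The one point where you undersell your own route is Step~3, which you present as an obstacle. It isn't: the factors of $2\pi$ coming from the Bernoulli asymptotics sit in the \emph{denominator}, so the naive estimate already gives a bound that is \emph{stronger} than what the theorem claims. Concretely, with $\|\mathrm{ad}_{X_0}^{2m}(Y)\|_2\leq(2\|X_0\|_2)^{2m}\|Y\|_2\leq(2\delta)^{2m}\|\Gamma_k\|_2$ and $|B_{2m}|/(2m)!=2\zeta(2m)/(2\pi)^{2m}\leq 2\zeta(4)/(2\pi)^{2m}$ for $m\geq 2$,
\begin{equation*}
\Bigl\|\sum_{m\geq 2}\tfrac{B_{2m}}{(2m)!}\mathrm{ad}_{X_0}^{2m}(Y)\Bigr\|_2
\;\leq\; 2\zeta(4)\,\|\Gamma_k\|_2\sum_{m\geq 2}\Bigl(\tfrac{\delta}{\pi}\Bigr)^{2m}
\;=\; 2\zeta(4)\,\frac{(\delta/\pi)^{4}}{1-(\delta/\pi)^2}\,\|\Gamma_k\|_2,
\end{equation*}
and for every $\delta\in(0,1)$ the prefactor $2\zeta(4)\,\tfrac{(\delta/\pi)^4}{1-(\delta/\pi)^2}$ is comfortably below $\tfrac{\delta^4}{1-\delta}$ (it is roughly $0.025\,\delta^4$). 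Multiplying by $\|\Gamma_k\|_2\leq\tfrac{6}{6-\delta^2}\|C_k\|_2$ from Step~2 then yields a linear rate dominated by the one in the statement, so no clever block-wise accounting is required; the stated bound follows \emph{a fortiori}. There is therefore no gap, only unnecessary self-doubt.

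What the paper's Goldberg-word organization buys over yours is not tightness (your route is in fact sharper for $\beta=\tfrac12$) but \emph{generalizability}: when $\beta\neq\tfrac12$, the update $Y_k$ acquires a nonzero $(1,1)$ block $\Theta_k$ whose size must be tied back to $\|C_k\|_2$ through a separate fixed-point argument, and the word-length bookkeeping with the Goldberg coefficient sum $\leq 1$ handles both diagonal blocks of $Y_k$ uniformly, which the single-block $\phi(\mathrm{ad}_X)$ picture does not do as cleanly. For the canonical-metric case at hand, however, your argument is sound and complete once Step~3 is recognized as already closed by the elementary Bernoulli estimate above.
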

\cref{thm:convergenceZimmermann} highlights that the closer $\m{U}$ and $\widetilde{U}$, the faster~\cref{alg:Zimmermannsalgorithm} should converge. This property was numerically confirmed in \cite[Sec.~5.3]{Zimmermann17}.

\section{The generalization of~\cref{alg:Zimmermannsalgorithm}}\label{sec:generalizedalgorithm}
We propose a new approach to generalize~\cref{alg:Zimmermannsalgorithm} to all the $\beta$-metrics. Still, we assume $n\geq2p$ which matches the `$n\gg p$' setting of practical big-data applications. 

When $\beta\neq\frac{1}{2}$, a generalization of~\cref{alg:Zimmermannsalgorithm}  consists of finding
a sequence of matrices $\{\m{A}_k,\m{B}_k,\m{C}_k\}_{k\in\mathbb{N}}$ with  $\m{A}_k,C_k \in\mathrm{Skew}(p),\m{B}_k\in \mathbb{R}^{p \times p}$ such that
\begin{equation}\label{eq:sequenceforbeta}
    \begin{bmatrix}
        \m{M}\\
        \m{N}
    \end{bmatrix} = \exp\begin{bmatrix}
        2\beta\m{A}_k&-\m{B}_k^T\\
        \m{B}_k&\m{C}_k
    \end{bmatrix}\m{I}_{2p\times p}\exp((1-2\beta)\m{A}_k),
\end{equation}
where $\m{M}\coloneq\m{U}^T\widetilde{U}$, $\m{N}\coloneq\m{Q}^T\widetilde{U}$ ($\m{Q}\in\mathrm{St}(n,p)$, $\m{Q}^T\m{U}=\m{0}$) 
and $\lim_{k\rightarrow\infty} \|\m{C}_k\|_2 = 0$. Since $A_k$ appears twice in~\eqref{eq:sequenceforbeta}, it is a challenging non-linear equation to solve. Moreover, straightforward usage of Newton's method is very expensive \cite{ZimmermannRalf22}. However, the equation can be tackled by constructing a sequence
\begin{equation}\label{eq:tacklingdifficulty}
    \begin{bmatrix}
        2\beta\m{A}_k&-\m{B}_k^T\\
        \m{B}_k&\m{C}_k
    \end{bmatrix}\coloneq \log\left(\m{V}_k\begin{bmatrix}
        \exp(-(1-2\beta)\widehat{\m{A}}_k)&0\\
        0&\m{I}_p
    \end{bmatrix}\right),
\end{equation}
where $\widehat{\m{A}}_k$ is a consistent approximation, i.e., $\lim_{k\rightarrow\infty}\|\widehat{\m{A}}_k-\m{A}_k\|_2 =0$. $V_k$ follows its definition from~\eqref{eq:sequenceforZimmermann}.
The matrix sequence $\{\m{V}_k\}_{k\in\mathbb{N}}$ is chosen such that $\lim_{k\rightarrow\infty} \|\m{C}_k\|_2 = 0$. If one knows $\widehat{\m{A}}_k=\m{A}_k$, we term  it a \emph{backward iteration}. This is too expensive in practice and we rather perform a \emph{forward iteration} by selecting $\widehat{\m{A}}_k$ as an approximation of $\m{A}_k$. Many different ideas can be exploited, we propose three of them. The simplest one is $\widehat{\m{A}}_k\coloneq\m{A}_{k-1}$. We term it a \emph{fixed forward iteration}. To improve this approximation, we can solve~\eqref{eq:sequenceforbeta} for $A_k$ approximately and at low computational cost, based on our knowledge of $\m{A}_{k-1}$. We term this a \emph{pseudo-backward iteration}. Finally, we can improve the fixed forward approximation by taking $\widehat{\m{A}}_k\coloneq \m{A}_{k-1}+h\m{Q}_{k-1} (\m{A}_{k-1}-\widehat{\m{A}}_{k-1})\m{Q}_{k-1}^T$ for some $h\in\mathbb{R}$ and $\m{Q}_{k-1}\in\mathrm{O}(p)$. We term this an \emph{accelerated forward iteration}. All three possibilities are 
investigated in~\cref{sec:forwardconvergence}. \cref{alg:generalizedStiefel} is a pseudo-code of the generalization of~\cref{alg:Zimmermannsalgorithm} to the $\beta$-metric. 
\begin{algorithm}
     \caption{Improved algebraic Stiefel logarithm for the $\beta$-metric family.} \label{alg:generalizedStiefel}
    \begin{algorithmic}[1]
        \STATE \textbf{INPUT:} Given $\m{U},\widetilde{U}\in\mathrm{St}(n,p)$, $n\geq 2p$ and $\varepsilon>0$, compute:
        \STATE Set $\m{M} = \m{U}^T\widetilde{U}$.
        \STATE Compute $\widehat{Q}\widehat{N} = (\m{I} - \m{U}\m{U}^T)\widetilde{U}$.\hspace{2.2cm} \#where $\widehat{Q}\in\mathrm{St}(n,p)$ and $\widehat{Q}^T U = 0$.
        \STATE Build $\m{V}_0 = \begin{bmatrix}
            \m{M}&\m{O}_0\\
            \m{N}&\m{P}_0
        \end{bmatrix}\in\mathrm{SO}(2p),\  Q\in\mathrm{St}(n,p)$. \hspace{-0.08cm} \#see~\cref{app:initialization} for $\m{N,O_0,P_0, Q}$.
        {\FOR{$k = 0,1, ...$}
            \STATE Take an approximation $\widehat{\m{A}}_{k}\approx\m{A}_{k}$. \hspace{3.6cm} \#see~\cref{sec:forwardconvergence}.
            \STATE Compute $\begin{bmatrix}
                2\beta \m{A}_{k} & -\m{B}_{k}^T\\
                \m{B}_{k}& \m{C}_{k}
            \end{bmatrix} = {\new \log}\left(\m{V}_k\begin{bmatrix}
                \exp(-(1-2\beta)\widehat{\m{A}}_{k})&0\\
                0&\m{I}_p
            \end{bmatrix}\right)$.
            \IF{$\|\m{C}_{k}\|+\|\widehat{\m{A}}_{k}-\m{A}_{k}\|\leq \varepsilon$}
                \STATE Break.
            \ENDIF
            \STATE Compute $\m{\Gamma}_k$ solving $\m{\Gamma}_k\m{S}+\m{S\Gamma}_k = \m{C}_k$ where $\m{S}:=\frac{1}{12}\m{B}_k\m{B}_k^T-\frac{\m{I}_p}{2}$.
            \STATE Update $\m{V}_{k+1} = \m{V}_k\begin{bmatrix}
                \m{I}_p&0\\
                0&\exp(\m{\Gamma}_k)
            \end{bmatrix}$.
        \ENDFOR}
        \RETURN {\new $\m{\Delta} = \m{U}\m{A}_k+\m{QB}_k\in T_U \mathrm{St}(n,p)$.}
    \end{algorithmic}
\end{algorithm}
\begin{rem}
    At line 3 of~\cref{alg:generalizedStiefel}, \cref{rem:rankdefficiency} still holds.
\end{rem}
\subsection{The fundamental equation of~\cref{alg:generalizedStiefel}} We write explicitly how an iteration of the algorithm relates to the previous one. This will facilitate our analysis of the algorithm.
{\new By line 7 of~\cref{alg:generalizedStiefel}, it holds that
\begin{equation*}
	V_k = \exp\left(\begin{bmatrix}
            2\beta \m{A}_{k} & -\m{B}_{k}^T\\
            \m{B}_{k}& \m{C}_{k}
        \end{bmatrix}\right)\exp\left(\begin{bmatrix}
           (1-2\beta)\widehat{A}_k&0\\
            0&0
        \end{bmatrix}\right).
\end{equation*}
Introducing line 12 of~\cref{alg:generalizedStiefel}, i.e., $\m{V}_{k+1} = \m{V}_k\left[\begin{smallmatrix}
                \m{I}_p&0\\
                0&\exp(\m{\Gamma}_k)
            \end{smallmatrix}\right]$, we obtain
}
\begin{equation}\label{eq:fundamentalequation}
        \begin{bmatrix}
            2\beta \m{A}_{k+1} & -\m{B}_{k+1}^T\\
            \m{B}_{k+1}& \m{C}_{k+1}
        \end{bmatrix} = \log\left(\exp\left(\begin{bmatrix}
            2\beta \m{A}_{k} & -\m{B}_{k}^T\\
            \m{B}_{k}& \m{C}_{k}
        \end{bmatrix}\right)\exp\left(\begin{bmatrix}
            \m{\Theta}_k&0\\
            0&\m{\Gamma}_k
        \end{bmatrix}\right)\right),
\end{equation}
where $\m{\Theta}_k$ is defined by
\begin{equation}\label{eq:thetadefinition}
    \m{\Theta}_k \coloneq\log\left(\exp((1-2\beta)\widehat{\m{A}}_{k}) \exp(-(1-2\beta)\widehat{\m{A}}_{k+1})\right).
\end{equation}
The loop generated by~\eqref{eq:fundamentalequation} is driven by two parameters: $\m{\Gamma}_k$ and $\widehat{\m{A}}_k$. The next sections are dedicated to the study of the possible choices for $\m{\Gamma}_k$ and $\widehat{\m{A}}_k$.

\subsection{BCH formulas for $\m{\Gamma}_k$ and $\widehat{\m{A}}_k$} \cref{alg:Zimmermannsalgorithm} chose $\m{\Gamma}_k$ using the Baker-Campbell-Hausdorff (BCH) formula. For $\m{X,Y}$ in the Lie algebra (e.g., $\mathrm{Skew}(n)$) of a Lie group (e.g., $\mathrm{SO}(n)$), the BCH formula gives
\begin{equation}\label{eq:BCHformula}
    \log(\exp(\m{X})\exp(\m{Y})) = \m{X}+\m{Y} + \frac{1}{2}\left[\m{X},\m{Y}\right]+\frac{1}{12}\left[\m{X}-\m{Y},\left[\m{X},\m{Y}\right]\right]+\text{H.O.T(4)},
\end{equation}
where $\left[\m{X},\m{Y}\right] := \m{XY}-\m{YX}$ is called the \emph{Lie bracket} or \emph{commutator}. H.O.T(4) stands for ``Higher Order Terms of 4th order''. 
The term `order' has to be read with care. It refers to terms in $X,Y$ of combined order 4 or higher. For example, $X^3Y$ and $XYXY$ are such $4$th-order terms.

Letting $\m{Z}:=\log(\exp(\m{X})\exp(\m{Y}))$, \cite[Thm.~1]{THOMPSON1989} showed that~\eqref{eq:BCHformula} was convergent for $\|\m{X}\|_2,\|\m{Y}\|_2\leq\mu<1$.
In the upcoming~\cref{cond:delta}, we state locality assumptions that are always sufficient for the convergence of~\eqref{eq:BCHformula}, namely conditions that yield $\|\m{X}\|_2,\|\m{Y}\|_2\leq\mu<1$. In view of the fundamental equation~\eqref{eq:fundamentalequation}, we define
\begin{equation}\label{eq:XandY}
    \m{X}_k = \begin{bmatrix}
                2\beta \m{A}_{k} & -\m{B}_{k}^T\\
                \m{B}_{k}& \m{C}_{k}
            \end{bmatrix} \text{ and } \m{Y}_k=\begin{bmatrix}
                \m{\Theta}_k&0\\
                0&\m{\Gamma}_k
            \end{bmatrix}.
\end{equation}
The BCH series expansion of the $p\times p$ bottom-right block up to fifth order of~\eqref{eq:BCHformula} yields
\begin{align}
    \label{eq:first}
    \m{C}_{k+1} &= \m{C}_{k}+\m{\Gamma}_k-\frac{1}{12}\left(\m{\Gamma}_k\m{B}_k\m{B}_k^T+\m{B}_k\m{B}_k^T\m{\Gamma}_k \right)\\
    \label{eq:highergamma}
    &+\mathcal{O}(\|\m{\Gamma}_k\|_2^i\|\m{C}_k\|_2^j)\\
    \label{eq:thetaterm}
    &+ \frac{1}{6}\m{B}_k\m{\Theta}_k\m{B}_k^T\\
    \label{eq:highertheta}
    &-\frac{1}{12}\left(\m{B}_k\m{\Theta}_k\m{B}_k^T\m{\Gamma}_k-\m{\Gamma}_k\m{B}_k\m{\Theta}_k\m{B}_k^T \right)\\
    \label{eq:hot5}
    &+\text{H.O.T}_{\m{C}}(5),
\end{align}
with $i,j\geq1$ in~\eqref{eq:highergamma}. The objective pursued in \cite{ZimmermannRalf22} to prove the linear convergence of~\cref{alg:Zimmermannsalgorithm} was to show that $\|\m{C}_{k+1}\|_2\leq a\|\m{C}_{k}\|_2 +\mathcal{O}(\|\m{C}_{k}\|_2^2)$ with $0<a<1$. The terms~\eqref{eq:first} and~\eqref{eq:highergamma} are the same as in \cite{ZimmermannRalf22}. The terms~\eqref{eq:first} can thus be cancelled similarly by taking $\m{\Gamma}_k$ as the solution to the \emph{Sylvester equation}~\eqref{eq:sylvesterfirst}. Assuming $\|\m{B}_k\|_2,\|\m{C}_k\|_2<\delta$ with $\delta<1$, $\m{\Gamma}_k$ satisfies
\begin{equation}\label{eq:bound_gamma_by_C}
{\new\|\m{\Gamma}_k\|_2\leq \frac{6}{6-\delta^2}\|\m{C}_k\|_2 \text{ (see \cite[p.967]{ZimmermannRalf22})}.}
\end{equation}
Hence, \eqref{eq:highergamma} turns out to be $\mathcal{O}(\|\m{C}_{k}\|_2^2)$, and is thus neglected.  The commonalities with \cite{ZimmermannRalf22} end here since the other terms are new or different. We need a bound on~\eqref{eq:thetaterm},~\eqref{eq:highertheta} and~\eqref{eq:hot5} in terms of $\|\m{C}_{k}\|_2$. From our experience, it would be a mistake to cancel~\eqref{eq:thetaterm} and/or~\eqref{eq:highertheta} with $\m{\Gamma}_k$ because of the negative influence it would have on~\eqref{eq:hot5}. Hence, the challenge is to choose $\widehat{\m{A}}_k$ where $\|\m{\Theta}_k\|_2$ converges linearly to $0$ when $\|\m{C}_k\|_2$ does. \cref{thm:convergence} shows that the \emph{backward iteration}, i.e., taking $\widehat{\m{A}}_k=\m{A}_k$, verifies this property. We also propose convergent \emph{forward iterations} in~\cref{sec:forwardconvergence} based on~\cref{thm:convergence}.

\subsection{Linear convergence of the backward iteration}\label{sec:convergencebackward} In this section, we generalize the analysis that was done for~\cref{alg:Zimmermannsalgorithm} in \cite{ZimmermannRalf22}, but for~\cref{alg:generalizedStiefel}. That is, assuming convergence,  we prove the local linear convergence of~\cref{alg:generalizedStiefel} implemented with backward iterations. 
In~\cref{sec:forwardconvergence}, we generalize the convergence result to forward iterations. \cref{thm:convergence} asks for a locality assumption as it was done in~\cref{thm:convergenceZimmermann} with $\left\|\left[\begin{smallmatrix}
        \m{A}_k&-\m{B}_k^T\\
        \m{B}_k&\m{C}_k
    \end{smallmatrix}\right]\right\|_2<\delta$. Given the \emph{fundamental equation}~\eqref{eq:fundamentalequation}, it would be natural for us to consider $\left\|\left[\begin{smallmatrix}
        2\beta\m{A}_k&-\m{B}_k^T\\
        \m{B}_k&\m{C}_k
    \end{smallmatrix}\right]\right\|_2<\delta$. It turns out that we need to bound both aforementioned matrices. The first option yields an easier expression for~\cref{thm:convergence} since it allows to write $\|\m{A}_k\|_2<\delta$ instead of $\|\m{A}_k\|_2<\frac{\delta}{2\beta}$. \cref{lem:normofX} shows that these two choices are equivalent up to a multiplicative factor. The initial choice is thus arbitrary in view of the asymptotic behavior.
    \begin{rem}
   Important efforts have been engaged in \cite{Zimmermann17} to obtain a sufficient condition on $\|\widetilde{U}-\m{U}\|_\mathrm{F}$ for the convergence of~\cref{alg:Zimmermannsalgorithm}. This led to $\|\widetilde{U}-\m{U}\|_\mathrm{F}<0.091$ \cite[Thm.~4.1]{Zimmermann17}. This value is not satisfying because it is not representative of the condition observed in practice. We want a notion of radius of convergence that is probabilistic and scales with the size of $\mathrm{St}(n,p)$, e.g., $2\sqrt{p}$, the Frobenius diameter. We propose the following approach. We obtain a theoretical linear rate of convergence for~\cref{alg:generalizedStiefel} in~\cref{thm:convergence} at the cost of feasibility assumptions (\cref{cond:delta}) and then we investigate the convergence radius numerically in~\cref{sec:logisticmodel}. 
    \end{rem}
\cref{thm:convergence} is a generalization of~\cref{thm:convergenceZimmermann}, borrowing voluntarily its format, and reducing to it when $\beta=\frac{1}{2}$ (canonical metric).
\begin{thm}\label{thm:convergence}
    Given {\new $\beta>\frac{1}{4}$, $\m{U},\widetilde{U}\in\mathrm{St}(n,p)$ $(\m{U}\neq\widetilde{U})$ and~\cref{alg:generalizedStiefel} with $\widehat{\m{A}}_k = \m{A}_k$ in step 6. Assume there is $k$ such that $X_k,Y_k$ from \eqref{eq:XandY} satisfy $\|Y_k\|_2\leq\|X_k\|_2$.} Assume further that there is $\delta>0$ satisfying~\cref{cond:delta} and such that for $\m{L}_k:=\begin{bmatrix}
        \m{A}_k&-\m{B}_k^T\\
        \m{B}_k&\m{C}_k
    \end{bmatrix}$ it holds $\|\m{L}_k\|_2<\delta$ throughout the algorithm's iteration loop. Then, it holds that
    \begin{equation}
    \|\m{C}_{k+1}\|_2\leq \underbrace{\left(\frac{\eta\delta^4}{6\xi(6-\delta^2)}+\left(1+\frac{\eta}{\xi}\right)\frac{\kappa\alpha}{1-\frac{\eta\alpha}{\xi}}\right)}_{\mathcal{O}(\delta^4)}\|\m{C}_k\|_2+\mathcal{O}(\|\m{C}_k\|_2^2),
\end{equation}
where the constant factors are defined by
\begin{align*}
    \tau &:= 1-2\beta,\hspace{2.7cm}\eta:= |\tau|\left(1+\delta|\tau|{\new +\frac{2}{3}\delta^2|\tau|^2}-\delta^2|\tau|^2\log\left(1-2\delta|\tau|\right)\right),\\
    \alpha&:=\frac{\delta^4 (1+|\tau|)^4}{1-\delta(1+|\tau|)},\hspace{1.6cm}\kappa:=\max\Big(\frac{6}{6-\delta^2},\frac{\eta\delta^2}{\xi(6-\delta^2)}\Big),\\
    \xi&:=2\beta - \eta\left(1+2\beta\delta+\frac{4\beta^2}{3}\delta^2+\frac{\delta^2}{6}+\frac{\delta^3}{6-\delta^2}\right).
\end{align*}
\end{thm}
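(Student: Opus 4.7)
The plan is to start from the BCH expansion of the bottom-right $p\times p$ block of the fundamental equation \eqref{eq:fundamentalequation}, i.e., lines \eqref{eq:first}--\eqref{eq:hot5}. As in the proof of \cref{thm:convergenceZimmermann}, the Sylvester step \eqref{eq:sylvesterfirst} cancels \eqref{eq:first} by construction, and the bound \eqref{eq:bound_gamma_by_C} immediately pushes \eqref{eq:highergamma} into $\mathcal{O}(\|\m{C}_k\|_2^2)$. The only genuinely new contributions are the $\m{\Theta}_k$-dependent terms \eqref{eq:thetaterm} and \eqref{eq:highertheta}, together with the remainder \eqref{eq:hot5}. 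Thus the whole argument reduces to proving an explicit linear estimate $\|\m{\Theta}_k\|_2 \leq C\|\m{C}_k\|_2$, after which $\|\m{B}_k \m{\Theta}_k \m{B}_k^T\|_2 \leq \delta^2\|\m{\Theta}_k\|_2$ furnishes the leading rate on $\|\m{C}_{k+1}\|_2$.

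Second, I would bound $\m{\Theta}_k$ for the backward iteration $\widehat{\m{A}}_k = \m{A}_k$, where $\m{\Theta}_k = \log(\exp(\tau \m{A}_k)\exp(-\tau \m{A}_{k+1}))$. Since $\|\m{A}_k\|_2,\|\m{A}_{k+1}\|_2 \le \delta$, Thompson's convergence theorem for the BCH series applies to $\tau \m{A}_k$ and $-\tau \m{A}_{k+1}$. Writing $\m{D}_k := \m{A}_{k+1}-\m{A}_k$ and expanding, the first-order piece is $-\tau \m{D}_k$; the remaining commutators are bounded by a geometric tail with ratio $2\delta|\tau|$, whose closed-form summation produces the $-\delta^2|\tau|^2\log(1-2\delta|\tau|)$ contribution and reconstitutes exactly the constant $\eta$ of the statement, yielding $\|\m{\Theta}_k\|_2 \leq \eta\|\m{D}_k\|_2$. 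The hypothesis $\beta>\tfrac{1}{4}$ enforces $2|\tau|<1$ so that, in combination with \cref{cond:delta}, we have $2\delta|\tau|<1$ and the logarithm is well-defined.

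Third, I would bound $\|\m{D}_k\|_2$ by reading off the top-left block of \eqref{eq:fundamentalequation}. Using \cref{lem:normofX} and the hypothesis $\|Y_k\|_2 \leq \|X_k\|_2$, both $\|X_k\|_2$ and $\|Y_k\|_2$ are at most $(1+|\tau|)\delta$, which, combined with \cref{cond:delta}, ensures the BCH series for $\log(\exp(X_k)\exp(Y_k))$ converges and gives a geometric tail of ratio $\delta(1+|\tau|)$ summing to $\alpha$. Extracting the top-left block yields
\begin{equation*}
    2\beta \m{D}_k = \m{\Theta}_k + \m{R}_k, \qquad \|\m{R}_k\|_2 \leq \alpha\|Y_k\|_2,
\end{equation*}
together with explicit commutator contributions involving $\m{A}_k, \m{B}_k^T \m{\Gamma}_k \m{B}_k$, and mixed $\m{\Theta}_k$-terms. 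Since $\|Y_k\|_2 \leq \max(\|\m{\Theta}_k\|_2,\|\m{\Gamma}_k\|_2)$, inserting $\|\m{\Theta}_k\|_2 \leq \eta\|\m{D}_k\|_2$ from Step~2 and \eqref{eq:bound_gamma_by_C} on the right-hand side produces a coupled inequality of the form $\xi\|\m{D}_k\|_2 \leq \kappa\alpha\|\m{C}_k\|_2 + \mathcal{O}(\|\m{C}_k\|_2^2)$. Tracking each commutator piece (the $2\beta\delta$ and $\tfrac{4\beta^2}{3}\delta^2$ terms from the $\tfrac12$- and $\tfrac1{12}$-order brackets of $X_k$ with $\m{\Theta}_k$, the $\delta^2/6$ from $\m{B}_k^T\m{\Gamma}_k\m{B}_k$-type top-left contributions, and the $\delta^3/(6-\delta^2)$ from the tail aligned with \eqref{eq:highertheta}) reproduces exactly the expression for $\xi$, while solving for $\|\m{D}_k\|_2$ gives the Neumann-series factor $1/(1-\eta\alpha/\xi)$.

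Fourth and finally, I would substitute back: $\|\m{\Theta}_k\|_2 \leq \eta\|\m{D}_k\|_2$ together with $\|\m{B}_k\|_2\leq\delta$ turns $\tfrac{1}{6}\m{B}_k\m{\Theta}_k\m{B}_k^T$ into the first term $\tfrac{\eta\delta^4}{6\xi(6-\delta^2)}\|\m{C}_k\|_2$; the subleading term \eqref{eq:highertheta}, controlled by $\|\m{\Gamma}_k\|_2\|\m{\Theta}_k\|_2 \leq \kappa\|\m{C}_k\|_2\cdot\eta\|\m{D}_k\|_2$, generates the $(1+\eta/\xi)\kappa\alpha/(1-\eta\alpha/\xi)$ contribution; and \eqref{eq:hot5} is absorbed into $\mathcal{O}(\|\m{C}_k\|_2^2)$. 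The main obstacle is the circular dependence in Step~3 ($\m{\Theta}_k$ is controlled by $\m{D}_k$, while $\m{D}_k$ is itself controlled by $\m{\Theta}_k$ through the top-left commutators) and the bookkeeping of BCH tail sums so that the constants $\eta$, $\xi$, $\kappa$, $\alpha$ emerge in exactly their stated closed forms; at the same time, \cref{cond:delta} together with $\beta>\tfrac{1}{4}$ must be shown to force $\xi>0$ and $\eta\alpha/\xi<1$ so the Neumann-series factor is meaningful.
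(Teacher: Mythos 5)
Your proposal follows essentially the same route as the paper's proof: read off the bottom-right block of the BCH expansion of the fundamental equation \eqref{eq:fundamentalequation}, cancel the linear terms with the Sylvester step, and reduce the whole argument to a linear estimate $\|\m{\Theta}_k\|_2 \lesssim \|\m{C}_k\|_2$, obtained by chaining a BCH estimate $\|\m{\Theta}_k\|_2 \leq \eta\|\m{A}_k-\m{A}_{k+1}\|_2$ (the content of \cref{lem:bound_hot_theta}) with a top-left-block estimate for $\|\m{A}_k-\m{A}_{k+1}\|_2$ in terms of $\|\m{C}_k\|_2$ (the content of \cref{lem:partialproofthmconvergence}), then solving the coupled inequality via $\xi>0$. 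Two items are misattributed, though, and you should repair them. First, $\beta>\tfrac14$ does \emph{not} force $2|\tau|<1$: for $\beta>\tfrac34$ one has $|\tau|=2\beta-1>\tfrac12$, hence $2|\tau|>1$, and the paper's range of interest includes $\beta=1$. The role of $\beta>\tfrac14$ is to make $\xi>0$ achievable at all, since by construction $\xi\leq 2\beta-\eta\leq 2\beta-|\tau|=2\beta-|1-2\beta|$, which is non-positive for $\beta\leq\tfrac14$; the inequality $2\delta|\tau|<1$ needed for the logarithm in $\eta$ is imposed directly in \cref{cond:delta}, not deduced from $\beta>\tfrac14$. Second, the Neumann factor $1/(1-\eta\alpha/\xi)$ does not come from solving $\xi\|\m{D}_k\|_2\leq\ldots$ for $\|\m{D}_k\|_2$ (that only contributes the factor $1/\xi$). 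It arises from a second, distinct self-reference: the Goldberg-series tail bound for the full $\|\text{H.O.T}(5)\|_2$ involves $\|\m{Y}_k\|_2\geq\|\m{\Theta}_k\|_2$, which via \eqref{eq:thetawithhot} contains $\tfrac{\eta}{\xi}\|\text{H.O.T}_{\m{A}}(5)\|_2$, and $\|\text{H.O.T}_{\m{A}}(5)\|_2\leq\|\text{H.O.T}(5)\|_2$ because it is a sub-block; unwinding this recursion is what produces the $1/(1-\eta\alpha/\xi)$. Your outline only flags the first circular dependence (between $\m{\Theta}_k$ and $\m{A}_k-\m{A}_{k+1}$) and misattributes the second; both need to be tracked separately to recover the stated constants.
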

\begin{proof}
    The proof consists of bounding~\eqref{eq:thetaterm},~\eqref{eq:highertheta} and~\eqref{eq:hot5} in terms of $\|\m{C}_{k}\|_2$. Although it is quite tedious, we demonstrate that it can be done. Recall that $\|\m{L}_k\|_2<\delta$ implies that $\|\m{A}_k\|_2,\|\m{B}_k\|_2,\|\m{C}_k\|_2<\delta$. {\new This proof involves the series expansions of several matrices with respective higher order terms. To avoid ambiguity, we append subscripts to ``H.O.T'' to distinguish them. There will be various conditions for the well-definedness of the parameters involved. These are only listed collectively in \cref{cond:delta} after the proof, as they would seem unmotivated at this point.} A first milestone is to bound $\|\m{\Theta}_k\|_2$ in terms of $\|\m{C}_{k}\|_2$. The BCH series expansion of $\m{\Theta}_k$, based on~\eqref{eq:thetadefinition}, leads to
\begin{equation}\label{eq:expansion_of_theta}
    \m{\Theta}_k= \tau (\m{A}_k-\m{A}_{k+1})-\frac{\tau^2}{2}[\m{A}_k, \m{A}_{k+1}]-\frac{\tau^3}{12}[{\new \m{A}_k+\m{A}_{k+1}}, [\m{A}_k, \m{A}_{k+1}]]+\text{H.O.T}_{\m{\Theta}}(4).
\end{equation}
By~\cref{lem:bound_hot_theta}, we have $\|\mathrm{H.O.T}_{\m{\Theta}}(4)\|_2\leq |\tau|(|\tau|\delta)^2\log\left(\frac{1}{1-2\delta|\tau|}\right)\|\m{A}_k-\m{A}_{k+1}\|_2=:|\tau|\zeta \|\m{A}_k-\m{A}_{k+1}\|_2$.
If we take the norm on both sides and mind that $[\m{A}_k,\m{A}_{k+1}]=[\m{A}_k,\m{A}_{k+1}-\m{A}_{k}]$, we get a bound on $ \|\m{\Theta}_k\|_2$ in terms of $\|\m{A}_k-\m{A}_{k+1}\|_2$:
\begin{align}
\nonumber
    \|\m{\Theta}_k\|_2 &\leq |\tau|(1+\zeta) \|\m{A}_k-\m{A}_{k+1}\|_2 +\left(\frac{|\tau|^2}{2}+\frac{|\tau|^3}{6}{\new \|\m{A}_k+\m{A}_{k+1}\|_2}\right)\| [\m{A}_k, \m{A}_{k+1}]\|_2\\
    \nonumber
    &\leq |\tau|(1+\zeta) \|\m{A}_k-\m{A}_{k+1}\|_2 +\left(\frac{|\tau|^2}{2}+\frac{|\tau|^3}{6}{\new 2\delta}\right)2\|\m{A}_{k}\|_2\|\m{A}_k-\m{A}_{k+1}\|_2\\
     \label{eq:thetafirstbound}
     &\leq \eta\|\m{A}_k-\m{A}_{k+1}\|_2,
\end{align}
where $\eta := |\tau|(1+\delta|\tau| + {\new \frac{2}{3}\delta^2 |\tau|^2}+\zeta)$. Now, we leverage the top-left $p\times p$ block of \eqref{eq:BCHformula} to bound $\|\m{A}_k-\m{A}_{k+1}\|_2$ in terms of $\|\m{C}_k\|_2$.  This is done in~\cref{lem:partialproofthmconvergence} and yields
\begin{align}
    \label{eq:equationfornormdeltaA}
    2\beta\|\m{A}_k-\m{A}_{k+1}\|_2&\leq \eta\left(1+2\beta\delta+\frac{4\beta^2}{3}\delta^2+\frac{\delta^2}{6}+\frac{\delta^3}{6-\delta^2}\right)\|\m{A}_k-\m{A}_{k+1}\|_2\\
    \nonumber
    &+ \frac{\delta^2}{6-\delta^2}\|\m{C}_k\|_2+\|\text{H.O.T}_{\m{A}}(5)\|_2+ \mathcal{O}(\|\m{A}_k-\m{A}_{k+1}\|_2^2).
\end{align}
By~\cref{cond:delta}, we have $\xi:=2\beta - \eta\left(1+2\beta\delta+\frac{4\beta^2}{3}\delta^2+\frac{\delta^2}{6}+\frac{\delta^3}{6-\delta^2}\right)>0$, which leads to
\begin{align}\label{eq:AkasCk}
    \|\m{A}_k-\m{A}_{k+1}\|_2&\leq \frac{\delta^2}{\xi(6-\delta^2)}\|\m{C}_k\|_2+\frac{1}{\xi}\|\text{H.O.T}_{\m{A}}(5)\|_2+ \mathcal{O}(\|\m{A}_k-\m{A}_{k+1}\|_2^2).
\end{align}
{\new By inserting \eqref{eq:AkasCk} into $\mathcal{O}(\|\m{A}_k-\m{A}_{k+1}\|_2^2)$, it follows that 
\begin{equation*}
\mathcal{O}(\|\m{A}_k-\m{A}_{k+1}\|_2^2)\in\mathcal{O}(\|\m{C}_k\|_2^2, \|\text{H.O.T}_{\m{A}}(5)\|_2^2,\|\m{C}_k\|_2 \|\text{H.O.T}_{\m{A}}(5)\|_2 ).
\end{equation*}
We introduce the short-hand notation $\Psi$ to represent all $\mathcal{O}(\|\m{A}_k-\m{A}_{k+1}\|_2^2)$ terms. In particular, we consider $\alpha\Psi=\Psi$ for all $\alpha>0$.} We will obtain further that $\m{\Psi}\in\mathcal{O}(\|\m{C}_k\|_2^2)$.
Inserting~\eqref{eq:AkasCk} into~\eqref{eq:thetafirstbound} finally yields
\begin{equation}\label{eq:thetawithhot}
    \|\m{\Theta}_k\|_2 \leq \frac{\eta\delta^2}{\xi(6-\delta^2)}\|\m{C}_k\|_2+\frac{\eta}{\xi}\|\text{H.O.T}_{\m{A}}(5)\|_2+ \m{\Psi}.
\end{equation}
Only the higher order terms are still to be eliminated. Notice that by the properties of $\|\cdot\|_2$, the block-expansions $\|\text{H.O.T}_{\m{A}}(5)\|_2$ from~\eqref{eq:thetawithhot} and $\|\text{H.O.T}_{\m{C}}(5)\|_2$ from~\eqref{eq:hot5} are both bounded from above by $\|\text{H.O.T(5)}\|_2$, the higher order terms of the complete series expansion of $\log(\exp(\m{X}_k)\exp(\m{Y}_k))$ from~\eqref{eq:XandY}. By definition of the BCH series expansion of $\m{X}_k$ and $\m{Y}_k$, we have
\begin{equation}\label{eq:boundhot5}
    \|\text{H.O.T(5)}\|_2\leq\sum_{l=5}^\infty\|z_l(\m{X}_k,\m{Y}_k)\|_2,
\end{equation}
where $z_l(\m{X}_k,\m{Y}_k)$ is the sum over all words of length $l$ in the alphabet $\{\m{X}_k,\m{Y}_k\}$ multiplied by their respective Goldberg coefficient of order $l$ \cite{Goldberg1956}. The goal is to obtain a relation between $\|\m{X}_k\|_2,\|\m{Y}_k\|_2$ and $\|\m{C}_k\|_2$. {\new By \eqref{eq:bound_gamma_by_C} and \eqref{eq:thetawithhot}}, it holds
\begin{align*}
    \|\m{Y}_k\|_2 &=\max(\|\m{\Gamma}_k\|_2,\|\m{\Theta}_k\|_2)\\
    &\leq\max\left(\frac{6}{6-\delta^2},\frac{\eta\delta^2}{\xi(6-\delta^2)}\right)\|\m{C}_k\|_2+\frac{\eta}{\xi}\|\text{H.O.T}_{\m{A}}(5)\|_2+\m{\Psi}.
\end{align*}
 It is now convenient to leverage $\|\m{Y}_k\|_2\leq \|\m{X}_k\|_2$ for $k$ large enough because it allows to simplify~\eqref{eq:boundhot5} greatly. It was already used in \cite{ZimmermannRalf22} for the proof of~\cref{thm:convergenceZimmermann}. It is a valid hypothesis since $\m{U}\neq\widetilde{U}$ must lead to $0=\|\m{Y}_\infty\|_2<\|\m{X}_\infty\|_2$ upon convergence.  By~\cref{lem:normofX}, we have $ \|\m{X}_k\|_2\leq (1+|\tau|)\|\m{L}_k\|_2$, {\new where we recall that $L_k=\left[\begin{smallmatrix}A_k&-B_k^T\\B_k&C_k\end{smallmatrix}\right]$}. Defining $\kappa:=\max\Big(\frac{6}{6-\delta^2},\frac{\eta\delta^2}{\xi(6-\delta^2)}\Big)$ and using the property that the sum over all Goldberg coefficients of order~$l$ is less than~$1$ (\cite{THOMPSON1989} and~\cite[Lem.~A.1]{Zimmermann17}), we have

\begin{align*}
     \|\text{H.O.T(5)}\|_2&\leq\sum_{l=5}^\infty\|\m{X}_k\|_2^{l-1}\|\m{Y}_k\|_2\\
     &\leq\sum_{l=5}^\infty(1+|\tau|)^{l-1}\|\m{L}_k\|^{l-1}_2  \left(\kappa\|\m{C}_k\|_2+\frac{\eta}{\xi}\|\text{H.O.T}_{\m{A}}(5)\|_2+\m{\Psi}\right)\\
     &\leq\left(\kappa\|\m{C}_k\|_2+\frac{\eta}{\xi}\|\text{H.O.T(5)}\|_2+\m{\Psi}\right)\sum_{l=5}^\infty(1+|\tau|)^{l-1}\delta^{l-1}\\
     &= \alpha\kappa\|\m{C}_k\|_2+\frac{\eta\alpha}{\xi}\|\text{H.O.T(5)}\|_2+\m{\Psi},
 \end{align*}
 where $\alpha:= \frac{\delta^4 (1+|\tau|)^4}{1-\delta(1+|\tau|)}$ and where the condition $\delta(1+|\tau|)<1$ for the convergence of the series was ensured by~\cref{cond:delta}. Also by~\cref{cond:delta}, we have $1-\frac{\eta\alpha}{\xi}>0$, leading to
 \begin{equation}\label{eq:boundhot}
     \|\text{H.O.T(5)}\|_2 \leq\frac{\kappa\alpha}{1-\frac{\eta\alpha}{\xi}}\|\m{C}_k\|_2 +\m{\Psi}
     \leq\frac{\kappa\alpha}{1-\frac{\eta\alpha}{\xi}}\|\m{C}_k\|_2 +\mathcal{O}(\|\m{C}_k\|_2^2).
 \end{equation}
 Inserting~\eqref{eq:boundhot} in $\m{\Psi}$ shows that $\m{\Psi}\in\mathcal{O}(\|\m{C}_k\|_2^2)$. Finally, we have
\begin{align*}
    \|\m{C}_{k+1}\|_2&\leq \left\|\frac{1}{6}\m{B}_k\m{\Theta}_k\m{B}_k^T\right\|_2+ \|\text{H.O.T}_{\m{C}}(5)\|_2+\mathcal{O}(\|\m{C}_k\|_2^2)\\
    &\leq \frac{\eta\delta^4}{6\xi(6-\delta^2)}\|\m{C}_k\|_2+\left(1+\frac{\eta}{\xi}\right)\|\text{H.O.T(5)}\|_2+\mathcal{O}(\|\m{C}_k\|_2^2)\\
    &\leq \left(\frac{\eta\delta^4}{6\xi(6-\delta^2)}+\left(1+\frac{\eta}{\xi}\right)\frac{\kappa\alpha}{1-\frac{\eta\alpha}{\xi}}\right)\|\m{C}_k\|_2+\mathcal{O}(\|\m{C}_k\|_2^2).
\end{align*}
This concludes the proof.
\end{proof}
As seen above, the proof of~\cref{thm:convergence} requires some technical conditions. These conditions are gathered in~\cref{cond:delta} to be seen at a glance. \cref{fig:condition} illustrates the admissible set of pairs $(\beta,\delta)$ satisfying~\cref{cond:delta}. Since all inequalities are strict, the set is an open set. In particular, notice that the largest feasible $\delta$ occurs at $\beta=\frac{1}{2}$ ($\tau=\eta=\zeta=0,\  \xi = 1,\ \alpha=\frac{\delta^4}{1-\delta},\ \kappa = \frac{6}{6-\delta^2}$), which corresponds to the canonical metric. {\new In~\cref{cond:delta}, we have by definition $\xi\leq 2\beta-\eta\leq 2\beta - |\tau| = 2\beta -|1-2\beta|$. When $\beta\leq\frac{1}{4}$, $2\beta -|1-2\beta|\leq 0$ and the condition $\xi>0$ can never be satisfied.}
\begin{cond}\label{cond:delta}
    Given $\beta,\delta>0$, $\delta$ is admissible for~\cref{thm:convergence} if it satisfies
    \begin{align*}
        &\delta (1+|\tau|)<1,\hspace{1cm}1-\frac{\eta\alpha}{\xi}>0, \hspace{1cm}2\delta|\tau|<1,\\
        &\xi:=2\beta - \eta\left(1+2\beta\delta+\frac{4\beta^2}{3}\delta^2+\frac{\delta^2}{6}+\frac{\delta^3}{6-\delta^2}\right)>0,\\
        &\left(\frac{\eta\delta^4}{6\xi(6-\delta^2)}+\left(1+\frac{\eta}{\xi}\right)\frac{\kappa\alpha}{1-\frac{\eta\alpha}{\xi}}\right)<1,
    \end{align*}
    where 
    $\tau := 1-2\beta,\ \eta := |\tau|(1+\delta|\tau|+ {\new \frac{2}{3}\delta^2 |\tau|^2}-\delta^2|\tau|^2\log\left(1-2|\tau|\delta\right)),\
    \alpha:=\frac{\delta^4 (1+|\tau|)^4}{1-\delta(1+|\tau|)}$ and $
    \kappa:=\max\Big(\frac{6}{6-\delta^2},\frac{\eta\delta^2}{\xi(6-\delta^2)}\Big).$
\end{cond}
\begin{figure}[H]
    \centering
    \includegraphics[width = 6.3cm]{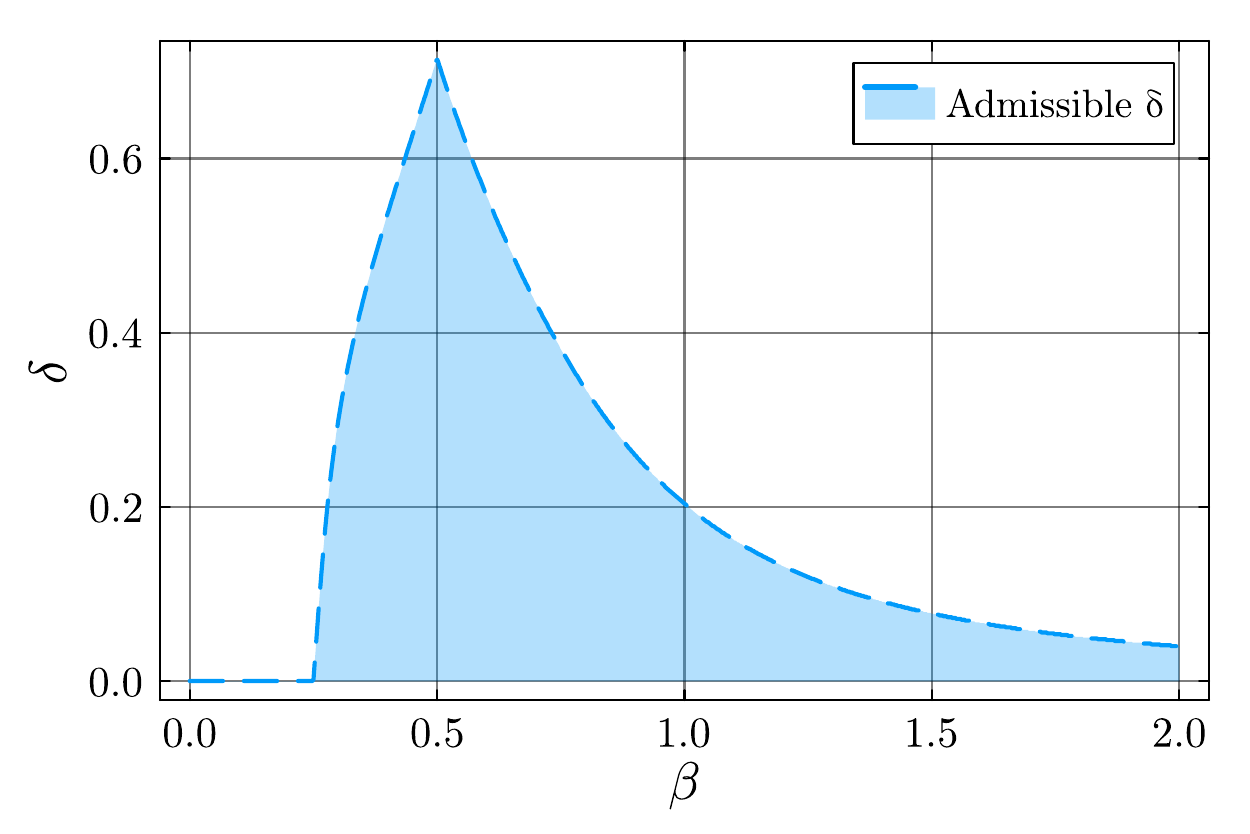}
    \vspace{-0.3cm}
    \caption{Open set of pairs $(\beta,\delta)$ satisfying~\cref{cond:delta}. \cref{cond:delta} gathers sufficient conditions to ensure~\cref{thm:convergence} on the convergence of the backward~\cref{alg:generalizedStiefel}.
    }
    \label{fig:condition}
    \vspace{-0.3cm}
\end{figure}

\subsection{Local linear convergence of three different forward iterations}\label{sec:forwardconvergence} We address now the question of the \emph{linear} convergence of~\cref{alg:generalizedStiefel} when we use an approximation $\widehat{\m{A}}_k\approx\m{A}_k$. As before, we examine the convergence rate only for instances for which the algorithm converges. The difference between the forward and the backward case comes from~\eqref{eq:thetadefinition}, thus depending completely on $\m{\Theta}_k$. Selecting $\widehat{A}_k$ where $\|\widehat{\m{A}}_k\|_2<\delta$ 
and replacing $A_k$ by $\widehat{A}_k$  in \eqref{eq:expansion_of_theta} yields
\begin{equation}\label{eq:forwardthetaexpansion}
    \|\m{\Theta}_k\|_2\leq \eta  \|\widehat{\m{A}}_k-\widehat{\m{A}}_{k+1}\|_2.
\end{equation}
If we can find $c>0$ (typically $c\geq 1$) such that 
\begin{equation}\label{eq:forwardcondition}
    \|\widehat{\m{A}}_k-\widehat{\m{A}}_{k+1}\|_2\leq c\|\m{A}_k-\m{A}_{k+1}\|_2,
\end{equation}
the new forward convergence theorem follows directly by replacing $\eta$ by $\widehat{\eta}:=c\eta$ in~\cref{thm:convergence} and~\cref{cond:delta}. In practice, \eqref{eq:forwardcondition} means that the approximation $\widehat{A}_k$ converges as fast as $A_k$ does. 
Obtaining $c$ explicitly in~\eqref{eq:forwardcondition} happens to be a burden for many approximations $\widehat{\m{A}}_k$. However, ensuring the  existence of $c\in[0,+\infty)$ is easier. In the next subsections, we propose three linearly convergent ways of choosing $\widehat{\m{A}}_k$.
\subsubsection{The fixed forward iteration}\label{subsec:pureforward} The computationally cheapest choice for the approximation is $\widehat{\m{A}}_k = \m{A}_{k-1}$, which we term \emph{fixed forward iteration}. In this case,~\eqref{eq:forwardcondition} becomes
\begin{equation}\label{eq:reversedconvergence}
    \|\m{A}_{k-1}-\m{A}_{k}\|_2\leq c\|\m{A}_k-\m{A}_{k+1}\|_2,
\end{equation}
Equation~\eqref{eq:reversedconvergence} is reversed when compared to the definition of linear convergence. {\new Indeed, linear convergence is characterized by the relation $\|\m{A}_k-\m{A}_{k+1}\|_2\leq r \|\m{A}_{k-1}-\m{A}_{k}\|_2 $ for some $r\in(0,1)$}. We will show that~\eqref{eq:reversedconvergence} holds for $k\in\mathbb{N}$ with the constant
 \begin{equation}\label{eq:cbar}
     \overline{c} := \max\left(\sup_{k\in\mathbb{N}}\frac{\|\m{A}_{k-1}-\m{A}_{k}\|_2}{\|\m{A}_k-\m{A}_{k+1}\|_2}, \frac{\|\widehat{\m{A}}_{0}-\m{A}_{0}\|_2}{\|\m{A}_0-\m{A}_{1}\|_2}\right)\in [0,+\infty).
 \end{equation} 
 The second argument of the $\max$ is only introduced for~\cref{subsec:acceleratedforward}. Assume there is no static iteration before convergence, i.e., no $k$ such that $\m{A}_{k}=\m{A}_{k+1}$. The property $\overline{c}\geq0$ is a direct consequence of its definition. The fact that $\overline{c}<+\infty$ holds if and only if there is no super-linear convergence of the sequence $\{\|\m{A}_k-\m{A}_{k+1}\|_2\}_{k\in\mathbb{N}}$ to $0$.
 In case this sequence converges super-linearly to $0$, the sequence $\{\|\m{\Theta}_k\|_2\}_{k\in\mathbb{N}}$ does too, by~\eqref{eq:forwardthetaexpansion}. Then, $\m{\Theta}_k$ can simply be ignored from our analysis and we are done. We can thus focus on the case of no super-linear convergence of $\{\|\m{A}_k-\m{A}_{k+1}\|_2\}_{k\in\mathbb{N}}$, which ensures  the existence of $\overline{c}\in[0,+\infty)$. The caveat of the fixed forward iteration is that we have no control on $\overline{c}$ | that is, on the convergence rate. It is fixed by the inputs of \cref{alg:generalizedStiefel} and we can only observe the consequences. This motivates looking for alternative forward iterations.
 In the next subsection, we introduce such an alternative, termed \emph{pseudo-backward}.
\begin{rem}
    For all types of forward iterations,  the first estimate $\widehat{\m{A}}_{0}$ must be chosen by a given rule. We propose to compute it using a BCH series expansion in~\cref{sec:forwardestimate}.
\end{rem}
\subsubsection{The pseudo-backward iteration \texorpdfstring{($c=1+\nu+\nu \overline{c}_\nu$ in \eqref{eq:forwardcondition})}{(4.23)}}\label{subsec:pseudobackward} By a pseudo-backward iteration, we mean an iteration scheme where $\widehat{\m{A}}_{k}$ satifies $\|\widehat{\m{A}}_{k}-\m{A}_{k}\|_2\leq \nu\|\m{A}_k-\m{A}_{k-1}\|_2$ with $\nu\in[0,1]$.  Said otherwise, $\widehat{\m{A}}_{k}$ ensures a sufficient improvement in each iteration compared to the fixed forward approximation $\m{A}_{k-1}$. This intentionally general definition matches very well all cases where we approximately obtain the backward iterate using an iterative method, for instance~\cref{alg:subproblem}. This intermediate type of iteration is designed to converge faster than the fixed forward case without requiring the exact computation of the backward iterate $\m{A}_k$.  Notice that $\nu=1$ corresponds to a fixed forward iteration while $\nu=0$ corresponds to a backward iteration. This new iteration leads to
    \begin{align}
        \label{eq:pseudobackwardconvergence}
        \|\widehat{\m{A}}_k-\widehat{\m{A}}_{k+1}\|_2 &= \|\widehat{\m{A}}_k-\widehat{\m{A}}_{k+1}+\m{A}_k-\m{A}_{k+1}-\m{A}_k+\m{A}_{k+1}\|_2\\
        \nonumber
        &\leq\|\widehat{\m{A}}_{k}-\m{A}_{k}\|_2 + \|\widehat{\m{A}}_{k+1}-\m{A}_{k+1}\|_2+\|\m{A}_{k+1}-\m{A}_{k}\|_2\\
        \nonumber
        &\leq \nu\|\m{A}_{k}-\m{A}_{k-1}\|_2 + \nu\|\m{A}_{k+1}-\m{A}_{k}\|_2+\|\m{A}_{k+1}-\m{A}_{k}\|_2\\
        \nonumber
        &\leq (1+ \nu + \nu \overline{c}_\nu)\|\m{A}_{k+1}-\m{A}_{k}\|_2,
    \end{align}
    where $\overline{c}_\nu$ is defined as in~\eqref{eq:cbar}, but is affected by $\nu$. The goal here is to enhance the convergence rate by moderating the constant $\overline{c}_\nu$ by the adaptive factor $\nu$ such that $1+\nu+\nu \overline{c}_\nu\ll \overline{c}$. We design a pseudo-backward algorithm in~\cref{sec:pseudobackward}.
\subsubsection{The accelerated forward iteration \texorpdfstring{($c=\frac{(1+|h|)\overline{c}_h}{1-\overline{c}_h|h|}$ in \eqref{eq:forwardcondition})}{(4.23)}}\label{subsec:acceleratedforward} The goal of this third and last type of iteration is to build an improved approximation $\widehat{\m{A}}_k$ using the information encoded in the previous-stage gap  $\m{A}_{k-1}-\widehat{\m{A}}_{k-1}$. We define the accelerated forward iteration by 
\begin{equation}\label{eq:accelerated_def}
\widehat{\m{A}}_k = \m{A}_{k-1}+h\m{Q}_{k-1} (\m{A}_{k-1}-\widehat{\m{A}}_{k-1})\m{Q}_{k-1}^T,
\end{equation}
 where $h\in\mathbb{R}$ and $\m{Q}_{k-1}\in\mathrm{O}(p)$. {\new The step size $h$ controls the importance given to the momentum while the matrix $Q_{k-1}$ performs a change of basis that aims at accounting that $A_{k-1}$ and $\m{A}_{k-1}-\widehat{\m{A}}_{k-1}$ are expressed in different bases. A concrete example is given at the end of this section.}
This new approximant $\widehat{\m{A}}_k$ should be consistent, i.e., the sequence $\{\|\widehat{\m{A}}_k-\m{A}_k\|_2\}_{k\in\mathbb{N}}$ should converge linearly to $0$ when $\{\|\m{A}_{k}-\m{A}_{k-1}\|_2\}_{k\in\mathbb{N}}$ does. This was true by definition in the two previous cases. For the accelerated forward iteration, it follows from
\begin{align*}
    \|\widehat{\m{A}}_k-\m{A}_k\|_2&\leq\|\m{A}_k-\m{A}_{k-1}\|_2 +|h| \|\m{Q}_{k-1}(\widehat{\m{A}}_{k-1}-\m{A}_{k-1})\m{Q}_{k-1}^T\|_2\\
    &=\|\m{A}_k-\m{A}_{k-1}\|_2 +|h| \|\widehat{\m{A}}_{k-1}-\m{A}_{k-1}\|_2\\
    &\leq\sum_{l=0}^{k-1} |h|^{l}\|\m{A}_{k-l}-\m{A}_{k-1-l}\|_2+|h|^k\|\widehat{A}_0-A_0\|_2\hspace{1.9cm}\text{(by recursion)}
    \end{align*}
\begin{align*}
    \hspace{1.8cm} &\leq\|\m{A}_k-\m{A}_{k-1}\|_2\sum_{l=0}^{k} \overline{c}_h^{l}|h|^{l}\hspace{3.7cm}\text{(by \eqref{eq:reversedconvergence} and \eqref{eq:cbar})}\\
    &\leq \frac{1}{1-\overline{c}_h|h|}\|\m{A}_k-\m{A}_{k-1}\|_2\hspace{5cm}\text{(if $\overline{c}_h|h|<1$)}
\end{align*}
Here, $\overline{c}_h$ is defined as in~\eqref{eq:cbar} but depends on $h$. Therefore, the accelerated forward iteration is consistent for $h$ small enough. In addition to consistency, $\{\widehat{A}_k\}_{k\in\mathbb{N}}$ should be convergent by satisfying~\eqref{eq:forwardcondition}. This holds since
\begin{align*}
    \nonumber
    \|\widehat{\m{A}}_{k+1}-\widehat{\m{A}}_{k}\|_2 &= \|\m{A}_{k}-\widehat{\m{A}}_{k}-h\m{Q}_{k}(\widehat{\m{A}}_{k}-\m{A}_{k})\m{Q}_{k}^T\|_2\\
    \nonumber
    &\leq (1+|h|)  \|\widehat{\m{A}}_k-\m{A}_k\|_2\\
    &\leq \frac{1+|h|}{1-\overline{c}_h|h|}\|\m{A}_k-\m{A}_{k-1}\|_2\\
    &\leq \frac{(1+|h|)\overline{c}_h}{1-\overline{c}_h|h|}\|\m{A}_{k+1}-\m{A}_{k}\|_2\hspace{4cm}\text{(if $\overline{c}_h|h|<1$)}
\end{align*}
 Notice that for $h=0$, we retrieve $c=\overline{c}$, the fixed forward iteration. Since $\overline{c}_h$ continuously depends on $h$ and since $\overline{c}_h h=0$ for $h=0$, there is an open neighbourhood of $h=0$ for which $\overline{c}_h|h|<1$. The constant $c= \frac{(1+|h|)\overline{c}_h}{1-\overline{c}_h|h|}$ is worse than $c=\overline{c}$, obtained for the fixed forward iteration. This is because a bad step size $h$, notably a wrong sign of $h$, can make~\cref{alg:generalizedStiefel} converge slower. 
{\newrev
\paragraph{Choosing $\m{Q}_k$ and $h$} We propose an accelerated forward iteration achieving fast numerical convergence, as will be confirmed in \cref{sec:fastforward}. To this end, let  $\tau\coloneq 1-2\beta$ and set $h\coloneq-\tau$, $Q_k\coloneq\exp(-\tau A_k)$ so that \eqref{eq:accelerated_def} becomes
\begin{equation} \label{eq:def_AFI}
	\widehat{A}_{k+1} = A_k - \tau \exp(-\tau A_k)(A_k - \widehat{A}_k)\exp(\tau A_k).
\end{equation}
This choice is based on a geometric motivation, illustrated in \cref{fig:AFI_update}. By extrapolating the shift of the approximation from $\widehat{A}_k$ to $A_k$, we update the current iterate $A_k$ using the parallel transport at zero time of a vector field $\Delta_k(t)$ along the geodesic $\gamma:[0,1]\mapsto\mathrm{SO}(p):t\rightarrow \exp(- \tau A_k t)$, see, e.g., \cite[p.~9]{EdelmanArias98}. Recall that any such vector field along $\gamma$ can be parameterized by $\Delta_k(t) = \exp(- \tau A_k t) W_k(t)$ where $ W_k(t)\in\mathrm{Skew}(p)$. Here, we choose the momentum $\Delta_k(t)$ at unit time by 
\begin{equation}
\label{eq:momentum_def}
	\Delta_k(1) \coloneq  \exp(-\tau A_k) W_k(1)\coloneq   \exp(-\tau A_k)\log\left(\exp(- \tau A_k)\exp(\tau \widehat{A}_k)\right).
\end{equation}
\begin{figure}
	\centering
	\label{fig:AFI_update}
	\includegraphics[width = 12cm]{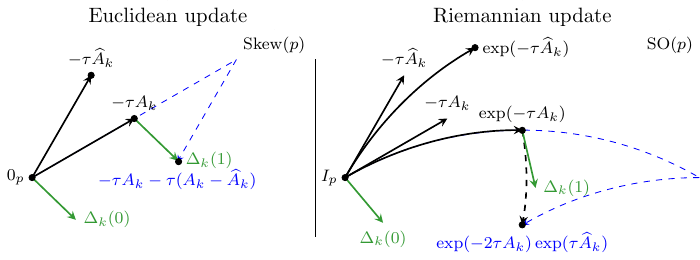}
	\vspace{-0.3cm}
	\caption{An artist view of the geometric construction of the momentum $\Delta_k$. On the left, the figure shows a construction that is equivalent to \eqref{eq:momentum_def} in the context of a flat space, e.g., $\mathrm{Skew}(p)$ equipped with the standard inner product $\langle\cdot,\cdot\rangle_\mathrm{F}$. On the right, the figure illustrates the analogous construction of $\Delta_k$  in the context of the Riemannian manifold $\mathrm{SO}(p)$ viewed as a subset of $(\mathrm{St}(p, p)$,$\langle\cdot,\cdot\rangle_{\beta=1})$.}
	\vspace{-0.5cm}
\end{figure}However, since $\Delta_k(1)\in T_{\exp(-\tau A_k)}\mathrm{SO}(p)$ and $A_k\in T_{I_p}\mathrm{SO}(p)$, these two quantities cannot be directly summed: we must first compute the parallel transport of $\Delta_k(1)$ to zero time, $\Delta_k(0)\in T_{I_p}\mathrm{SO}(p)$, before updating $\widehat{A}_{k+1} = A_k + \alpha_k W_k(0)$ for some step size $\alpha_k\in\mathbb{R}$. It follows from \cite[Eq.~2.18]{EdelmanArias98} that the parallel transport is given by
\begin{equation}\label{eq:transport_def}
	\Delta_k(1)= I_p  \exp\left(-\frac{\tau A_k}{2}\right) W_k(0) \exp\left(-\frac{\tau A_k}{2}\right).
\end{equation}
By combining \eqref{eq:momentum_def} and \eqref{eq:transport_def}, we have $W_k(0) =  \exp\left(-\frac{\tau A_k}{2}\right)  W_k(1)\exp\left(\frac{\tau A_k}{2}\right) $ and, therefore, the update is
\begin{align}
	\nonumber
	\widehat{A}_{k+1} =& A_k + \alpha_k \exp\left(-\frac{\tau A_k}{2}\right) \log\left[\exp(- \tau A_k)\exp(\tau \widehat{A}_k)\right]\exp\left(\frac{\tau A_k}{2}\right) \\
	\label{eq:modified_update}
	=&  A_k + \alpha_k \exp\left(-\tau A_k\left(\frac{1}{2}+\omega\right)\right)\cdot\\
	\nonumber
	& \log\left[ \exp\left(-\tau A_k(1-\omega)\right)\exp(\tau \widehat{A}_k) \exp\left(-\omega\tau A_k\right)\right]\exp\left(\tau A_k\left(\frac{1}{2}+\omega\right)\right),
\end{align}
where $\omega\in\mathbb{R}$ is a free parameter. The purpose of introducing $\omega$ is that the numerical scheme of~\eqref{eq:def_AFI} arises by choosing $\alpha_k=1$ and by linearizing the costly matrix logarithm of \eqref{eq:modified_update} using the BCH formula by
\begin{align}
	\label{eq:linearized_log}
	 \log\left[ \exp\left(-\tau A_k(1-\omega)\right) \exp(\tau \widehat{A}_k) \exp\left(-\omega\tau A_k\right)\right]& =\\
	 \nonumber
	 & \hspace{-4cm} -\tau (A_k- \widehat{A}_k)+\frac{\tau^2(1-2\omega)}{2}[\widehat{A}_k, A_k] +\ \text{H.O.T}(3).
\end{align}
Equation \eqref{eq:linearized_log} shows that choosing $\omega=\frac{1}{2}$ yields a linearization of higher-order accuracy. In addition to be cheaper than~\eqref{eq:modified_update}, experiments suggest that \eqref{eq:def_AFI} achieves the same performance as~\eqref{eq:modified_update} in terms of number of iterations of \cref{alg:generalizedStiefel}. A likely explanation is that the higher accuracy of \eqref{eq:modified_update} is lost throughout the iteration loop of \cref{alg:generalizedStiefel}. Current attempts to demonstrate theoretically the effectiveness of this choice of accelerated forward iteration remained unfruitful due to the several layers of nonlinearities that are involved. 
}

 \subsection{An efficient start point for the forward iteration}\label{sec:forwardestimate}
   A good initial guess for $\widehat{\m{A}}_0$ is important for the fast convergence of~\cref{alg:generalizedStiefel}. Starting from~\eqref{eq:tacklingdifficulty} and defining $\log(\m{V}_{0}):=\begin{bmatrix}
       \m{E}&-\m{F}^T\\
       \m{F}&\m{G}
   \end{bmatrix}\in\mathrm{Skew}(2p)$, the BCH series expansion of the $p\times p$ top-left block of~\eqref{eq:tacklingdifficulty} implies that
   \begin{equation}
        \label{eq:termtocancelforestimate}
       2\beta(\m{A}_{0} - \widehat{\m{A}}_{0}) = \m{E} - \widehat{\m{A}}_{0} {\new + \frac{\tau}{12}(\m{F}^T\m{F} \widehat{\m{A}}_{0}+\widehat{\m{A}}_{0}\m{F}^T\m{F})}-\frac{\tau}{2}[\m{E}, \widehat{\m{A}}_{0}]+\text{H.O.T.}
   \end{equation}
   To obtain a good approximation $ \widehat{\m{A}}_{0}\approx \m{A}_{0}$, we choose $ \widehat{\m{A}}_{0}$  in such a way that the three first right terms of~\eqref{eq:termtocancelforestimate} cancel out. Hence, $\widehat{\m{A}}_{0}$ must be a solution of the Sylvester equation
   \begin{equation}\label{eq:initialestimate}
       \m{S}\widehat{\m{A}}_{0}+\widehat{\m{A}}_{0}\m{S} = \m{E} \text{ where } \m{S}:=\frac{\m{I}_p}{2} - \frac{\tau}{12}\m{F}^T\m{F}.
   \end{equation}
   If $\beta\geq\frac{1}{2} \ (\tau \leq 0)$, the smallest eigenvalue of $S$ is bounded from below by $\frac{1}{2}$. It follows from~\cite[Thm.~VII.2.12]{bhatia97}, that $\|\widehat{A}_0\|_2\leq \|E\|_2\leq \delta$. If $0<\beta<\frac{1}{2} \ (\tau > 0)$, the smallest eigenvalue of $S$ is bounded from below by $\frac{1}{2} - \frac{\tau}{12}\delta^2$, thus $\|\widehat{A}_0\|_2\leq \frac{6}{6-\tau\delta^2}\|E\|_2\leq \frac{6\delta}{6-\tau\delta^2}$.
   In this paper, all the experiments are performed using this first initial guess $\widehat{\m{A}}_{0}$.
\subsection{A quasi-geodesic sub-problem for the backward iteration}\label{sec:subproblemalgorithm} We have not yet addressed the question of how to perform a \emph{backward iteration}. Equation \eqref{eq:sequenceforbeta} has to be solved. Following \cite{Bendokat_2021}, it can be termed ``finding a \emph{quasi-geodesic}'', stated in~\cref{prob:subproblem}.
\begin{prob}{\textsc{Quasi-geodesic sub-problem}}\label{prob:subproblem}
    Given $\m{V}\in \mathrm{SO}(2p)$ and $\beta>0$, find $\m{D}$, $\m{C}\in\mathrm{Skew}(p),\ \m{B}\in\mathbb{R}^{p\times p}$ such that 
\begin{equation}\label{eq:subproblemequation}
    \gamma(t) \coloneq \exp\left(t\begin{bmatrix}
        2\beta\m{D}&-\m{B}^T\\
        \m{B}&\m{C}
    \end{bmatrix}\right)\begin{bmatrix}
        \exp(t(1-2\beta)\m{D})&\m{0}\\
        \m{0}&\m{I}_p
    \end{bmatrix} \text{ ends in } \gamma(1)= \m{V}.
\end{equation}
\end{prob}
\cref{alg:subproblem} is a method to solve~\cref{prob:subproblem}. It is a simplified version of~\cref{alg:generalizedStiefel} where $\m{C}_k$ is not constrained to converge to $\m{0}$ anymore. Starting from an initial guess $\widehat{D}_0$ ($A_{k-1}$ in practice), we successively obtain $D_k$ and update $\widehat{D}_{k+1}$ as the accelerated forward approximation of $D_{k+1}$ for $k=0,1,...$. Then, we set $\widehat{A}_k = D_\infty$ and perform the next backward iteration of~\cref{alg:generalizedStiefel}. Solving~\cref{prob:subproblem} to $\varepsilon$-precision using~\cref{alg:subproblem} is however not competitive with the previously proposed methods \cite{BrynerDarshan17,sutti2023shooting,ZimmermannRalf22}. This is why we investigate a \emph{pseudo-backward} iteration in~\cref{sec:pseudobackward} | we only perform a few iterations of~\cref{alg:subproblem} to improve the approximation $\widehat{A}_k$. An alternative method to solve~\cref{prob:subproblem} based on shooting principle from \cite{BrynerDarshan17} is proposed in~\cref{app:shootingsubproblem}. In practice, we observed the better performance of~\cref{alg:subproblem}.
\begin{algorithm}
    \caption{The sub-problem's iterative algorithm}\label{alg:subproblem}
    \begin{algorithmic}[1]
        \STATE \textbf{INPUT:} Given $\m{V}\in\mathrm{SO}(2p)$, $\widehat{D}_0\in\mathrm{Skew}(p)$, $\beta>0$ and $\varepsilon>0$, compute:
        \STATE Define $\tau = 1-2\beta$.
        \FOR{$k=0,1,...$}
            \STATE Compute $\begin{bmatrix}
                2\beta\m{D}_k&-\m{B}_k^T\\
                \m{B}_k&\m{C}_k
            \end{bmatrix}=\log\left(\m{V}\begin{bmatrix}
                \exp(-\tau\widehat{\m{D}}_k)&\m{0}\\
                \m{0}&\m{I}_p
            \end{bmatrix}\right)$.
            \IF{$\|\m{D}_k-\widehat{\m{D}}_k\|_\mathrm{F}<\varepsilon$}
                \STATE Break.
            \ENDIF
            \STATE Set $\widehat{\m{D}}_{k+1} = \m{D}_k - \tau\exp(-\tau\m{D}_k)(\m{D}_k-\widehat{\m{D}}_k)\exp(\tau \m{D}_k) $.
        \ENDFOR
        \RETURN $\m{D}_k$
    \end{algorithmic}
\end{algorithm}
\vspace{-0.5cm}
 
\section{The performance of forward iterations}\label{sec:forwardperformance} This section investigates the numerical convergence of the different forward iterations introduced in~\cref{sec:forwardconvergence}. These variants of~\cref{alg:generalizedStiefel} are benchmarked in~\cref{sec:benchmark}.
 
\subsection{The accelerated forward iteration}\label{sec:fastforward} \Cref{subsec:acceleratedforward} defined the accelerated forward iteration. \cref{fig:fastforward} quantifies how much this strategy speeds-up~\cref{alg:generalizedStiefel} compared to using the fixed forward iteration. First, notice in~\cref{fig:fastforward} that the convergence rate of the fixed forward iteration is very sensitive to~$\beta$. In comparison, the accelerated forward method provides a convergence rate that is almost independent of $\beta$ when it converges. The improvement effect as a function of $\beta$ and $\|\widetilde{U}-U\|_F$ is summarized on the bottom right plot of~\cref{fig:fastforward}. The fast increase of the improvement factor for $\beta=1$ when the Frobenius distance gets larger is due to the increased convergence radius of the accelerated forward iteration, while the fixed forward iteration gets close to divergence. 
 \begin{figure}
     \centering
     \includegraphics[width=6.3cm]{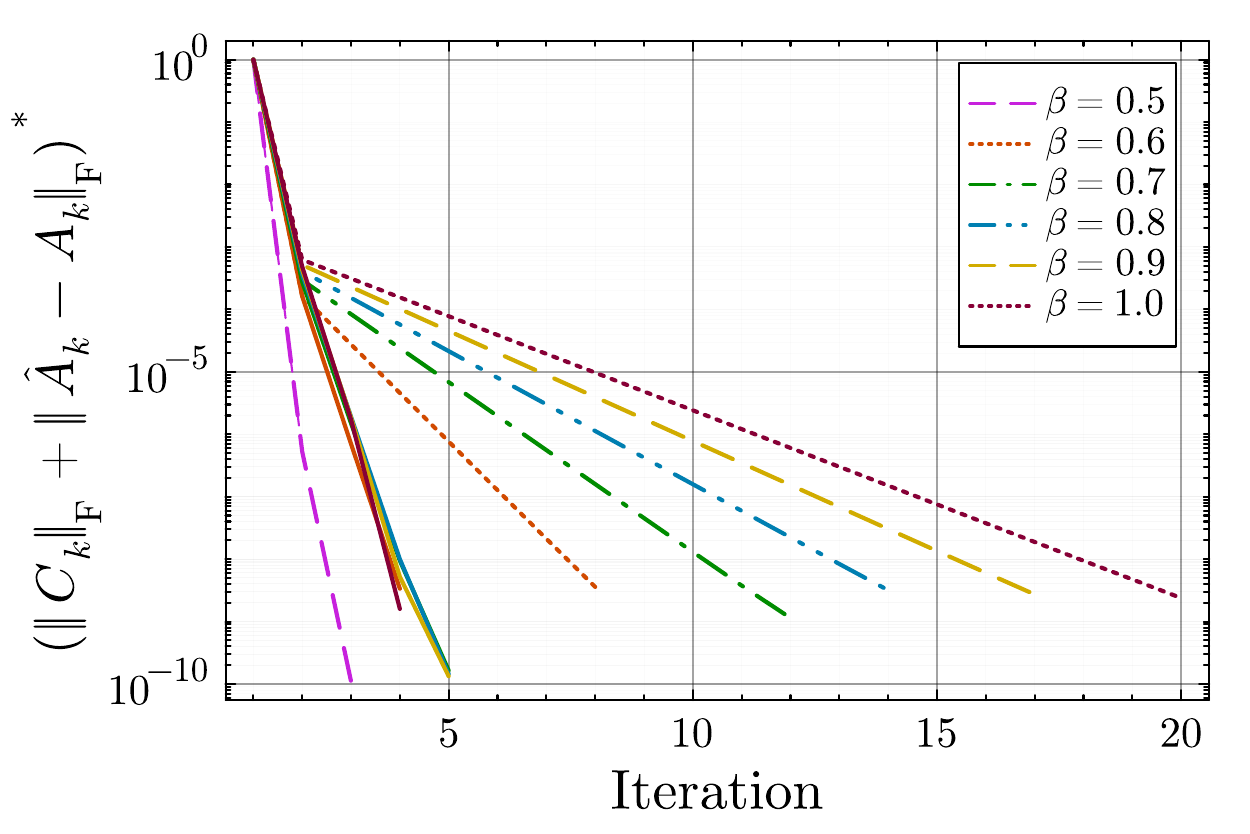}
     \includegraphics[width=6.3cm]{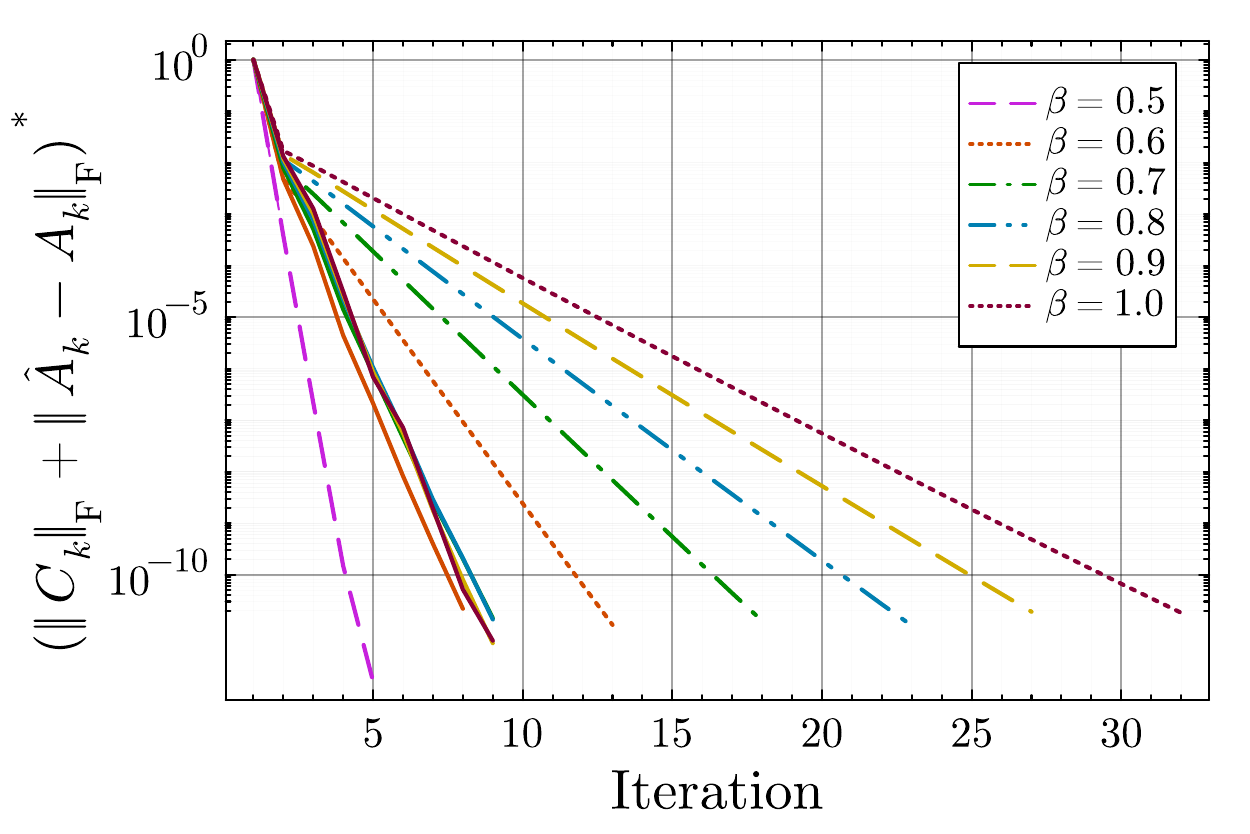}
     \includegraphics[width=6.3cm]{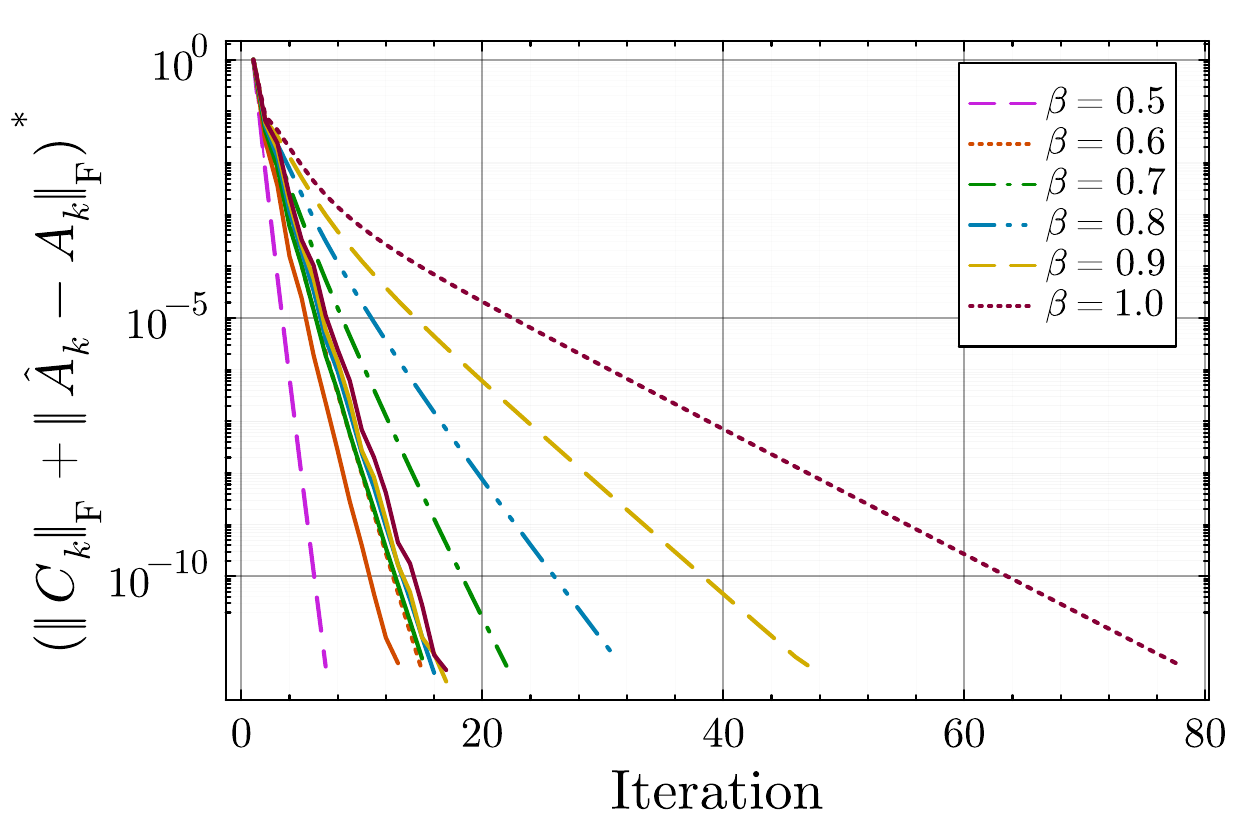}
     \includegraphics[width=6.3cm]{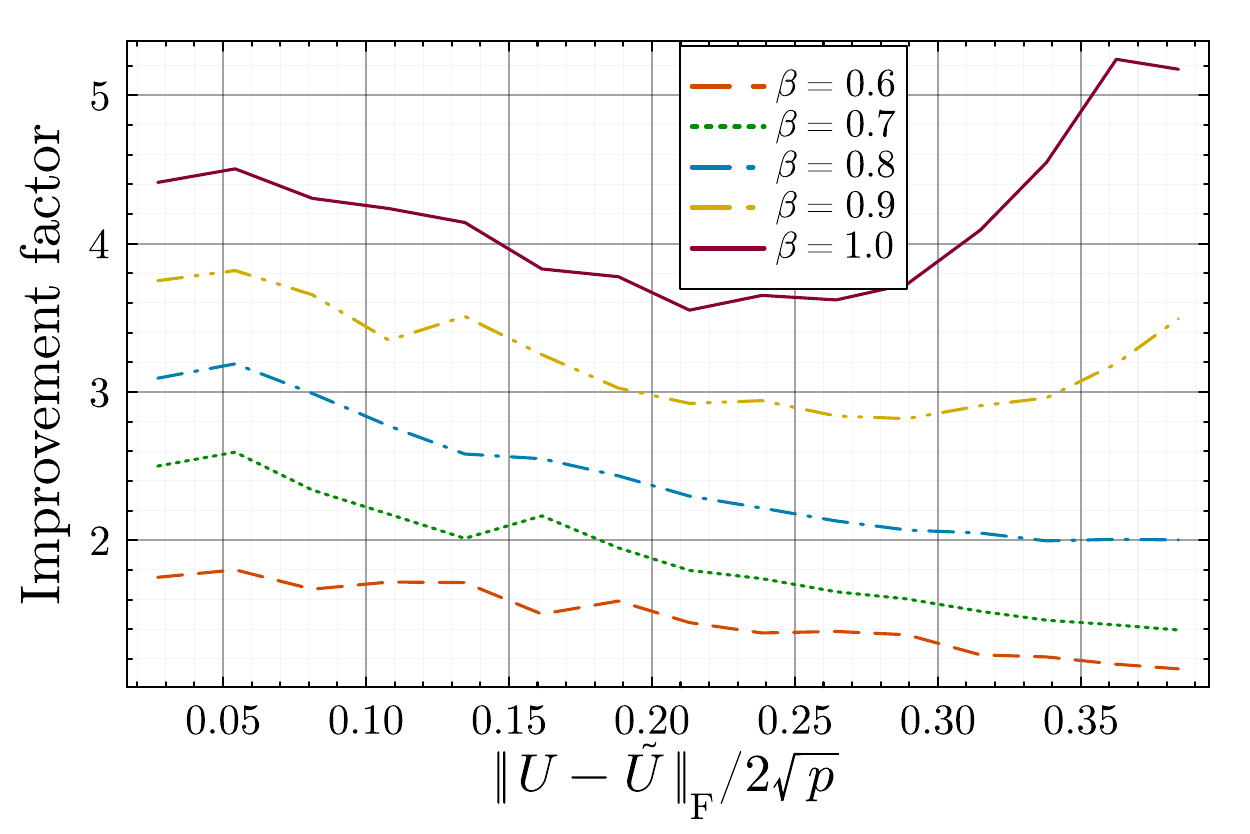}
     \vspace{-0.3cm}
     \caption{Convergence of~\cref{alg:generalizedStiefel} with an accelerated forward iteration ($\m{Q}_k = \exp((2\beta-1)\m{A}_k)$) from~\cref{sec:fastforward} (solid lines) and the fixed forward iteration (stylized lines) on $\mathrm{St}(n=120,p=50)$. The matrices $\m{U},\widetilde{U}$ are randomly generated at Frobenius distance $\|\m{U}-\widetilde{U}\|_\mathrm{F} \in \{0.03,0.19,0.37\}\cdot2\sqrt{p}$ for respectively the top left, top right and bottom left plots. The stars `` $*$'' on the y-axes specify that the residuals are normalized by the residual of the first iteration. The bottom right figure shows how the improvement factor (i.e., the ratio between the number of iterations of the fixed forward and the accelerated forward method) varies as the Frobenius distance increases in $\mathrm{St}(n=60, p=30)$.}
     \label{fig:fastforward}
     \vspace{-0.5cm}
 \end{figure}
 
\subsection{The pseudo-backward iteration compared to the fixed forward iteration}\label{sec:pseudobackward} Pseudo-backward iterations only perform a few sub-iterations of~\cref{alg:subproblem} with $\widehat{\m{D}}_0 := \m{A}_{k-1}$  to improve the quality of the estimation $\widehat{A}_k$ such that $\|\widehat{A}_k-A_k\|_F\leq \nu \|A_{k-1}-A_k\|_F$ with $\nu<1$. The left plot of~\cref{fig:subproblemconvergence} exemplifies that performing two sub-iterations can already provide $\nu \approx 0.1$. Combine this result with a second observation:~\cref{alg:generalizedStiefel} will produce a new iterate $\m{A}_{k+1}$ anyway so that it is not worth the computational effort to obtain $\widehat{\m{A}}_k$ with $\|\widehat{\m{A}}_k-\m{A}_k\|_\mathrm{F}\ll\|\m{A}_{k+1}-\m{A}_k\|_\mathrm{F}$. Hence, the first iterations of~\cref{alg:subproblem} speed-up~\cref{alg:generalizedStiefel} but the last ones are a waste of resource. \cref{fig:subproblemconvergence} investigates the optimal number of sub-iterations to perform. The further $\beta$ is from $\frac{1}{2}$, the more the sub-iterations are improving the performance. {\new For $\beta\in\left[0.7,1\right]$}, the optimal number of sub-iterations settles at $2$.
 
\cref{fig:subiterinfluence} illustrates that the more sub-iterations of~\cref{alg:subproblem} are performed, the fewer iterations of~\cref{alg:generalizedStiefel} are needed. For these experiments, 2 sub-iterations are enough to reach the performance of the backward iteration.

 \begin{figure}
     \centering
     \includegraphics[width = 6cm]{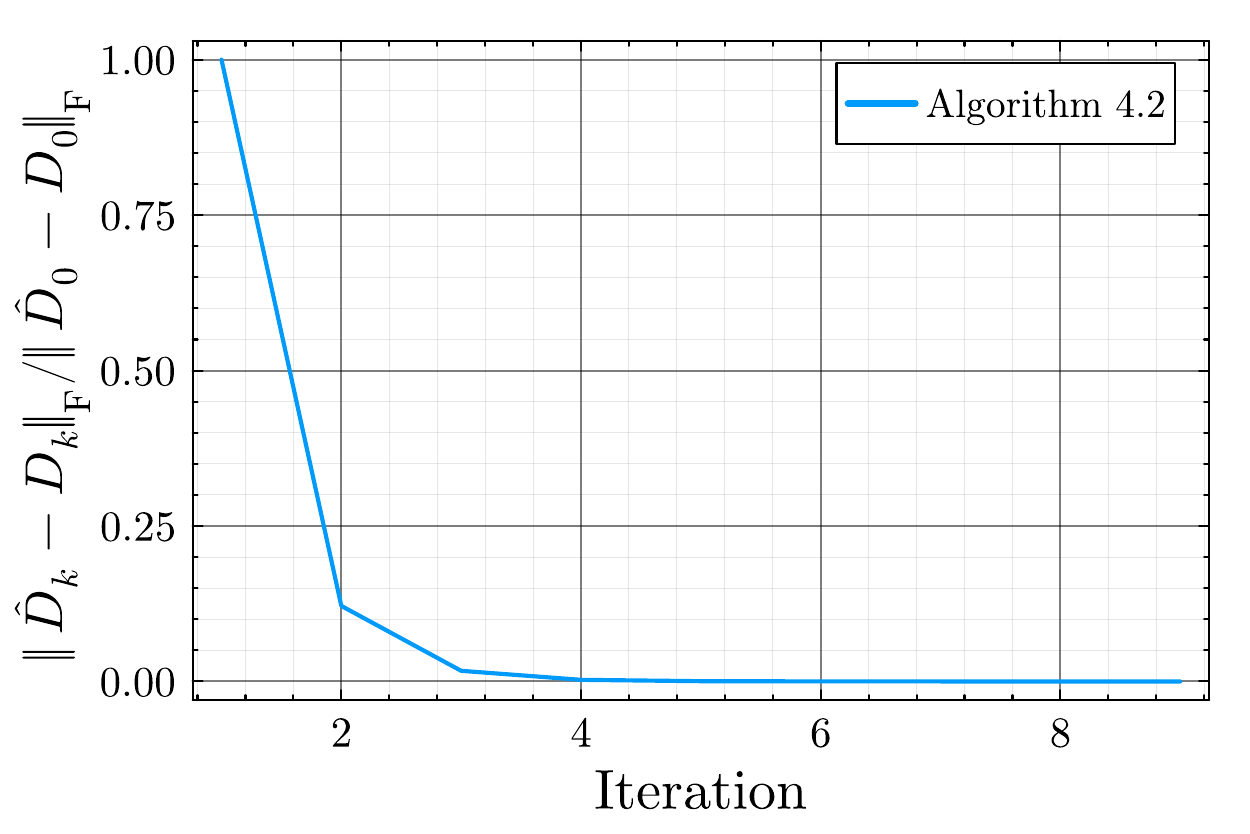}
     \includegraphics[width = 6cm]{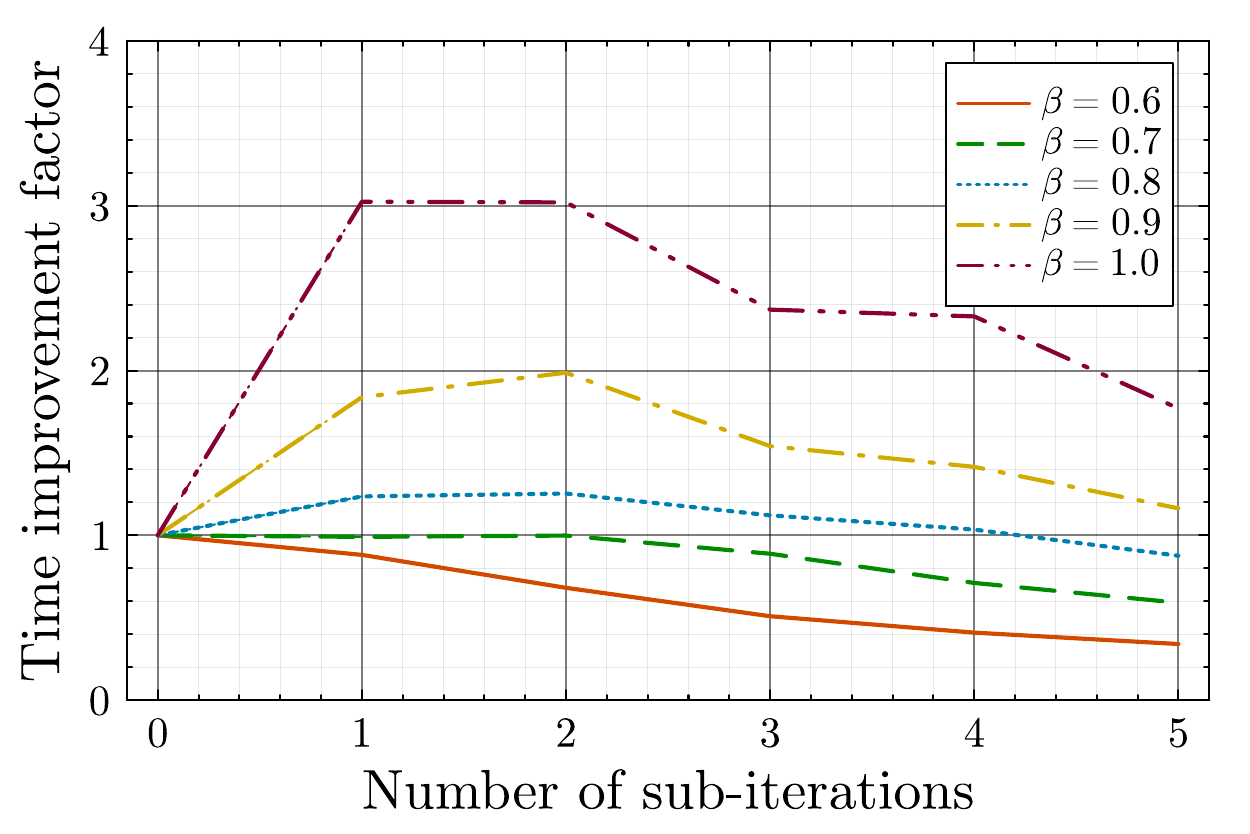}
     \vspace{-0.35cm}
     \caption{{\newrev On the left, the evolution of the residual $\|\m{D}_k-\widehat{\m{D}}_k\|_\mathrm{F}$ of~\cref{alg:subproblem} for a random matrix $V\in\mathrm{SO}(80)$ with $\|V-I_{80}\|_F=0.36\cdot2\sqrt{40}$. On the right, the \emph{time improvement factor} is the ratio between the running time of the fixed forward and the pseudo-backward method when $\beta$ and the number of sub-iterations vary. The stopping criterion is set to $(\|C_k\|_F+\|\widehat{A}_k-A_k\|_F)^*<10^{-12}$.  The experiment is performed on $\mathrm{St}(n=80,p=40)$ at distance $\|\m{U}-\widetilde{U}\|_\mathrm{F}=(0.36\pm0.01)\cdot2\sqrt{p}$}.}
     \label{fig:subproblemconvergence}\vspace{-0.3cm}
 \end{figure}
 \begin{figure}
     \centering
     \includegraphics[width =6.3cm]{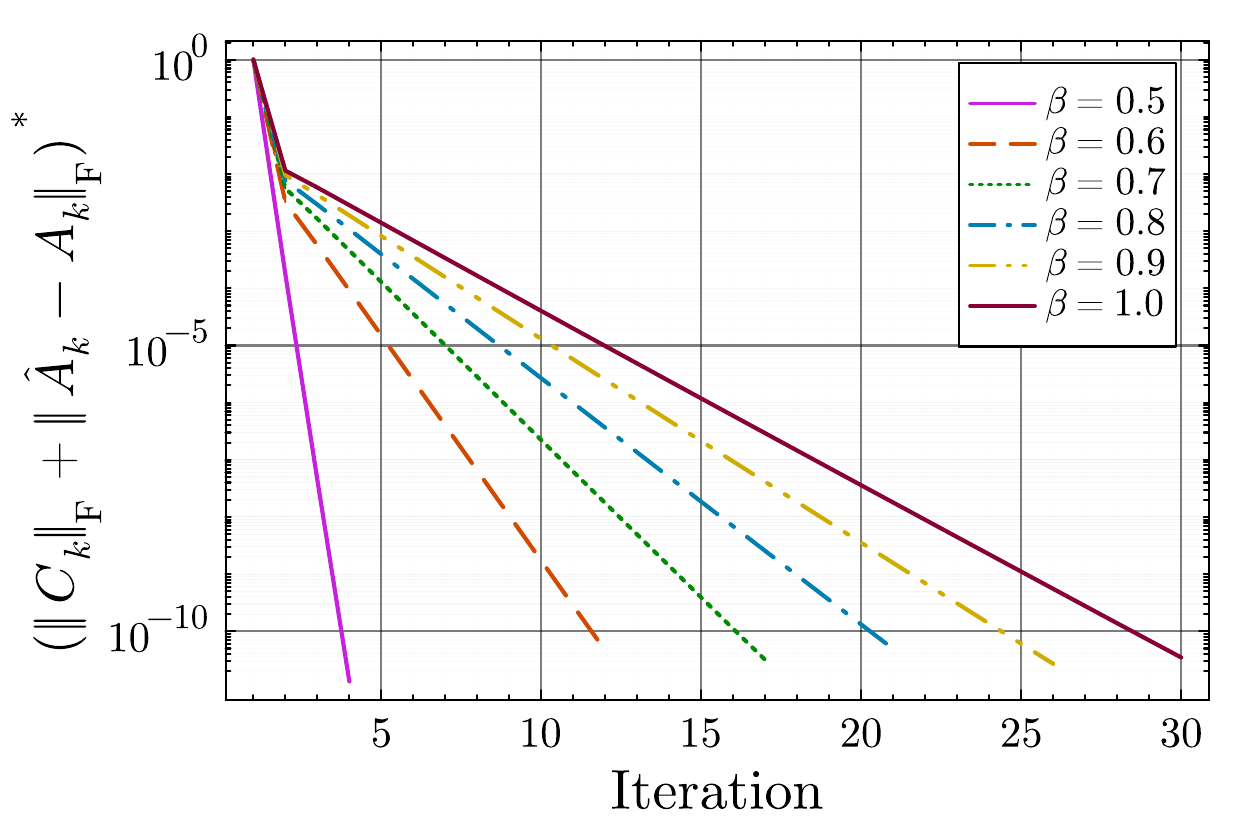}
     \includegraphics[width =6.3cm]{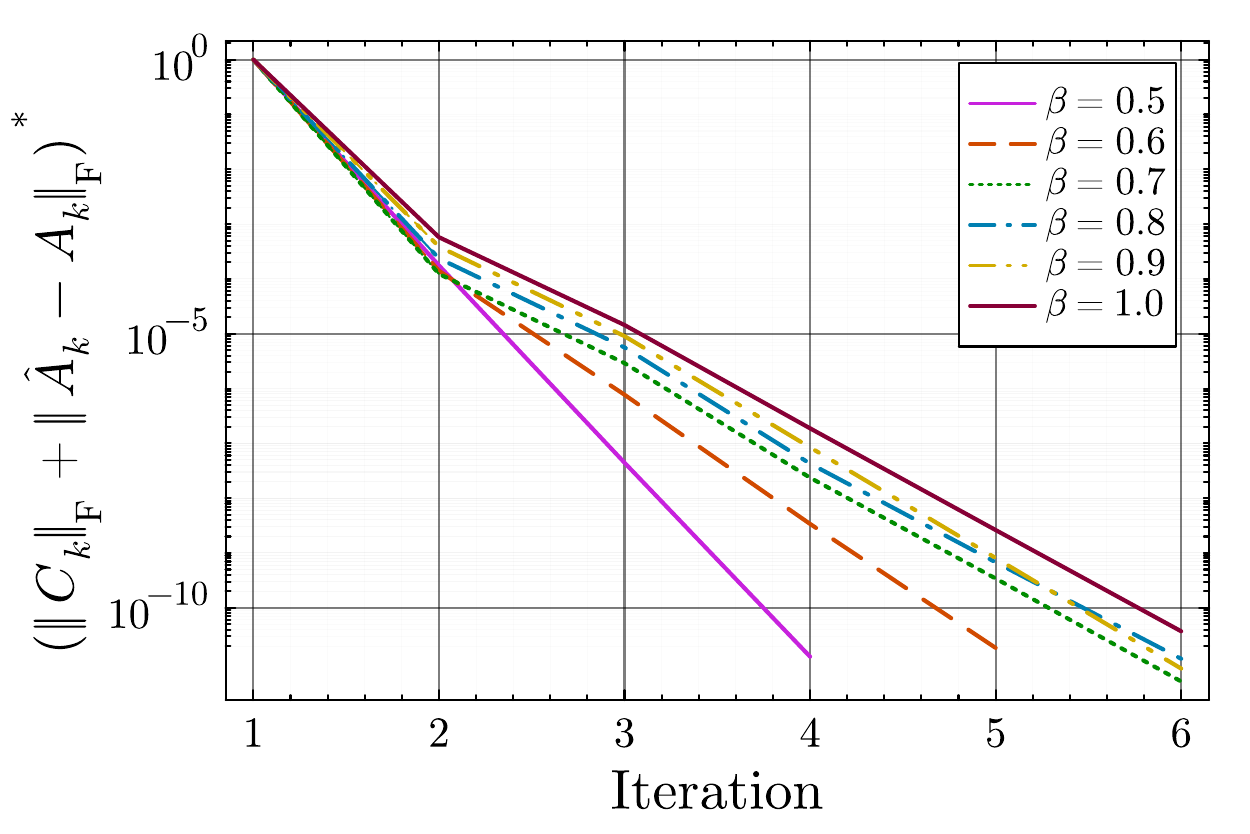}
     \includegraphics[width =6.3cm]{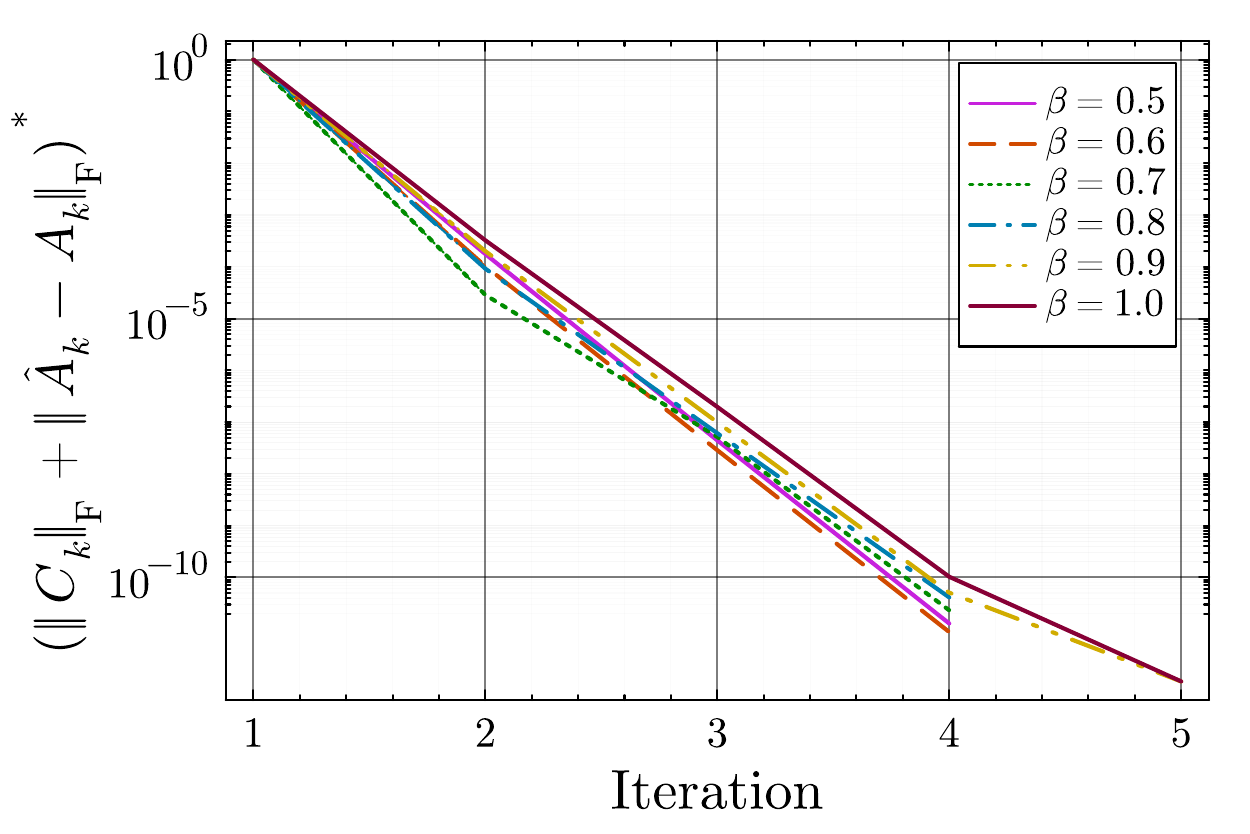}
     \vspace{-0.35cm}
     \caption{Evolution of the residuals $\|\m{C}_k\|_\mathrm{F}+\|\m{A}_k-\widehat{\m{A}}_k\|_\mathrm{F}$ for the fixed forward iteration (top left), pseudo-backward iteration with 1 sub-iterations (top right) and 2 sub-iterations (bottom) for a random experiment on $\mathrm{St}(n=80,p=30)$ with $\|\m{U}-\widetilde{U}\|_\mathrm{F} = 0.16\cdot2\sqrt{p} \approx 1.75$. The star `` $*$'' indicates that the residuals are normalized by the residual of the first iteration.
     For all plots, the curve for $\beta=0.5$ (solid purple line) is the same {\new since the all iterations reduce to the same algorithm for the canonical metric}.
     }
     \label{fig:subiterinfluence}
     \vspace{-0.5cm}
 \end{figure}
 
\section{Performance analysis}\label{sec:performance} We investigate the performance of~\cref{alg:generalizedStiefel} through two questions: how often and how fast does it converge? To answer the first one, we estimate a probabilistic convergence radius| that is, we find the distance $\|\m{U}-\widetilde{U}\|_\mathrm{F}$ such that~\cref{alg:generalizedStiefel} converges with probability close to 1, say 0.99. For the second question, we compare the running times of known methods, carefully implemented to extract the best performance for each of them. 

 \subsection{Probabilistic radius of convergence}\label{sec:logisticmodel}  From randomized numerical experiments, we fit a logistic model $m_\theta(x)\approx P(\text{Alg.~\ref{alg:generalizedStiefel} converged}\ | \ \|\widetilde{U}-\m{U}\|_\mathrm{F}=x)$ where $\theta$ is chosen as the maximum likelihood estimator. The details and motivation of this fitting are described in~\cref{app:logisticmodel}. The left plot in~\cref{fig:logisticregression} displays the fitting models on $\mathrm{St}(32,16)$. This fit provides a probabilistic radius of convergence $r$ by taking $r:=\max\{x\in\mathbb{R} \ |\ m_\theta(x)\geq0.99\}$, i.e., the probability of convergence is higher than $99\%$ if $\|\m{U}-\widetilde{U}\|_\mathrm{F}\leq r$. The right plot in~\cref{fig:logisticregression} shows that convergence is most challenging under the Euclidean metric ($\beta=1$). In this case, \cref{alg:generalizedStiefel} converges with high probability if the distance of the inputs is $\|U-\widetilde{U}\|_F<0.4\cdot2\sqrt{p}$.
\begin{figure}
    \centering
    \includegraphics[width=6.3cm]{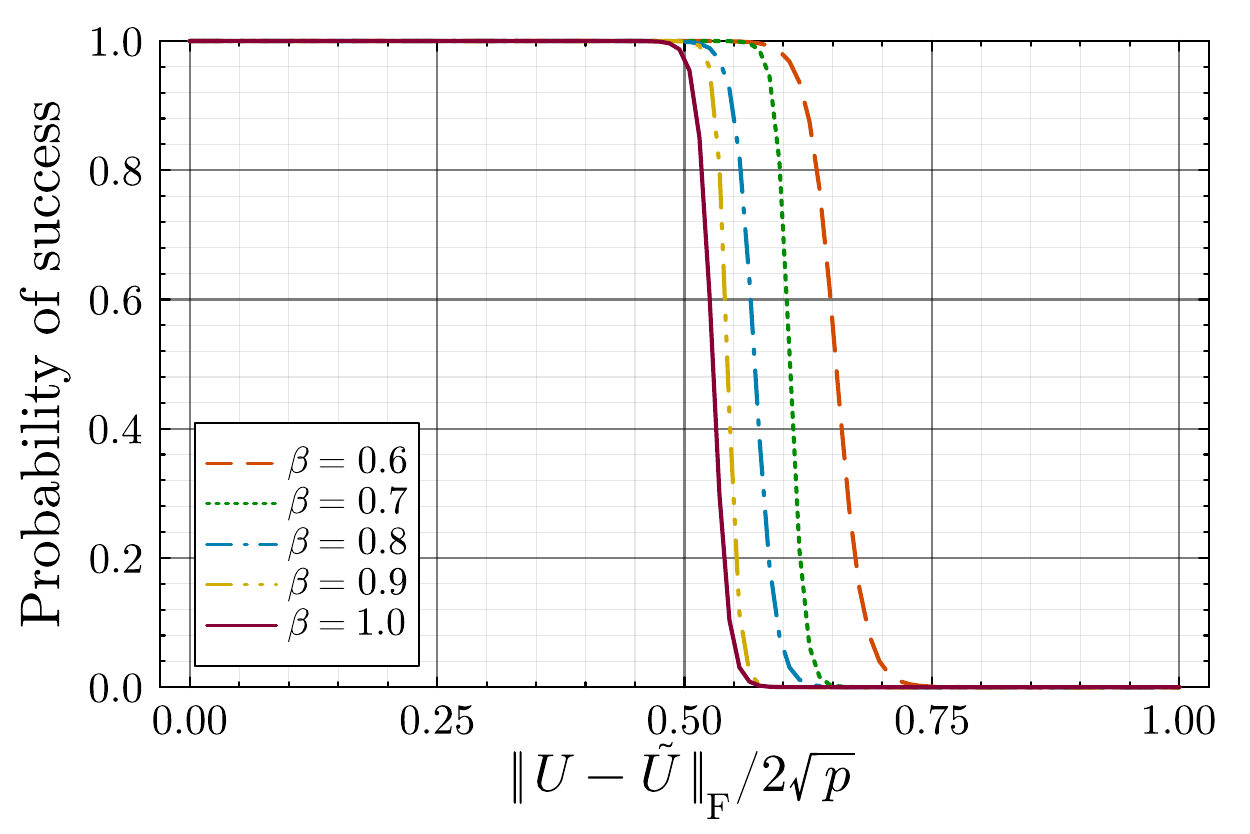}
    \includegraphics[width=6.3cm]{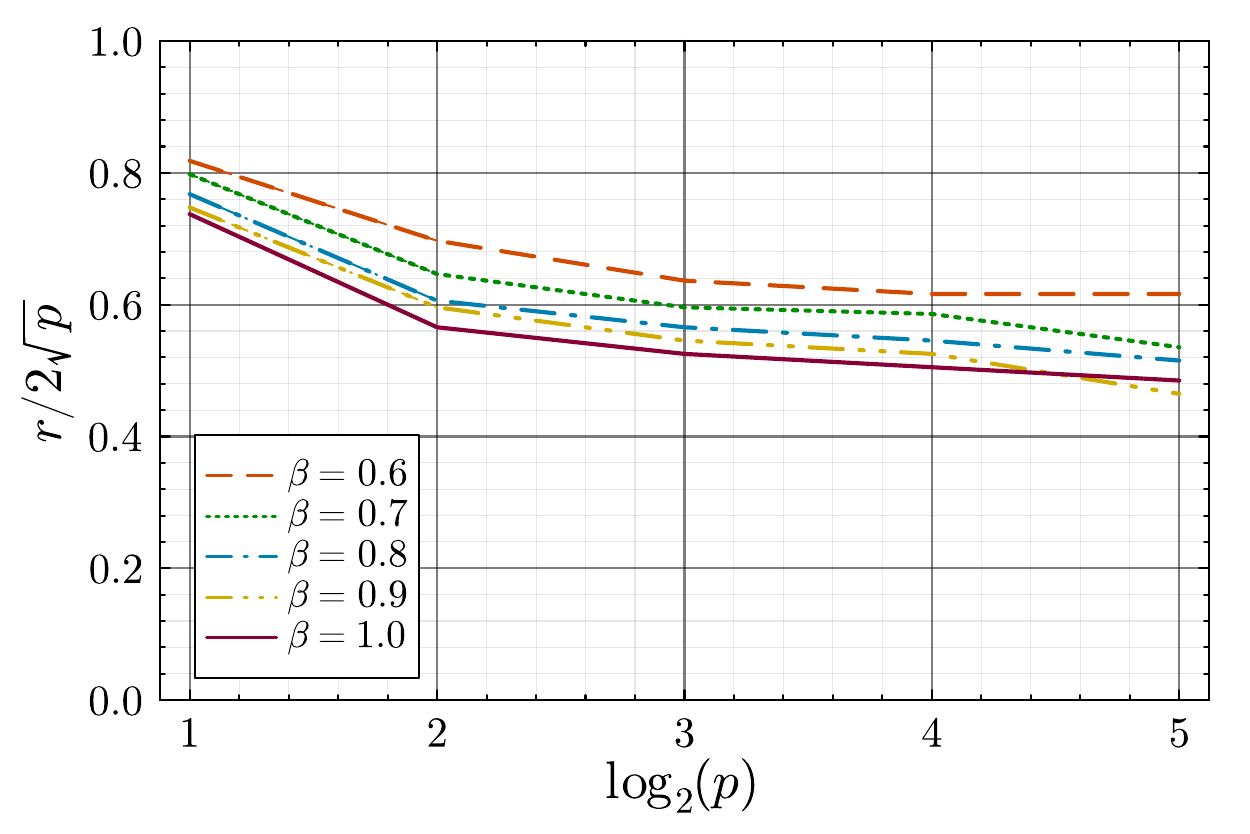}
    \vspace{-0.3cm}
    \caption{On the left,  the logistic regression models on  1000 random samples on $\mathrm{St}(32,16)$. The models are trained until $\|\nabla f(\theta)\|_F<10^{-8}$ (see~\cref{app:logisticmodel}). The logistic model estimates the probability of success of~\cref{alg:generalizedStiefel} (implemented with 2 pseudo-backward sub-iterations). Respectively for $\beta$ from $0.6$ to $1$, the R2 factors of the fitting are $\{0.85, 0.97,0.96,0.98,0.97\}$, confirming the goodness of fit. On the right, the evolution of the radius of convergence as $p$ varies on $\mathrm{St}(2p,p)$.}
    \label{fig:logisticregression}
    \vspace{-0.8cm}
\end{figure}
\begin{rem}
    When $\m{U},\widetilde{U}$ are not in the convergence radius of~\cref{alg:generalizedStiefel}, it is still possible to use it as a subroutine for a globally convergent method, e.g., the \emph{leapfrog method} of \cite{noakes_1998}. This method has already been considered on the Stiefel manifold \cite{sutti2023shooting,sutti2023leapfrog}. It was then combined with a shooting method. However, being a global method, the leapfrog method needs not to be used inside the convergence radius of~\cref{alg:generalizedStiefel}.
\end{rem} 
\subsection{Benchmark}\label{sec:benchmark} We benchmark the different versions of~\cref{alg:generalizedStiefel} with the $p$-shooting method \cite[Algorithm~2]{ZimmermannRalf22}. The benchmarking is performed on random matrices $U_i,\widetilde{U}_i\in \mathrm{St}(n,p)$ for $i=1,...,N$ generated at fixed Frobenius distance. In view of the homogeneity of $\mathrm{St}(n,p)$, the $U_i$'s can be chosen arbitrarily w.l.o.g, here by applying a Gram-Schmidt process on a random matrix. The $\widetilde{U}_i$'s are built within the convergence radius as follows. For $\m{A}_i\in \mathrm{Skew}(n)$ filled with i.i.d normally distributed entries $\sim\mathcal{N}(0,\delta)$, we build $\widetilde{U}_i = [U_i\ U_{i,\perp} ]\exp(\m{A}_i)\m{I}_{n\times p}$. Because of a ``Central Limit Theorem effect'', if $\delta$ is fixed and $n,p$ are large, the value of $\frac{\|U_i-\widetilde{U}_i\|_\mathrm{F}^2}{4p}\in[0,1]$ converges to an expected value. Our experiments compare
\begin{enumerate}
    \item \cref{alg:generalizedStiefel} using fixed forward iterations.
    \item \cref{alg:generalizedStiefel} using pseudo-backward iterations with 1 and 2 sub-iterations of~\cref{alg:subproblem}.
    \item \cref{alg:generalizedStiefel} using accelerated forward iterations.
    \item The $p$-shooting method \cite[Algorithm~2]{ZimmermannRalf22} with 3 and 5 discretization points, implemented as in \href{https://github.com/RalfZimmermannSDU/RiemannStiefelLog/tree/main/Stiefel_log_general_metric}{RiemannStiefelLog}.
\end{enumerate}
We consider two sizes of Stiefel manifolds, $\mathrm{St}(80,20)$  and $\mathrm{St}(100,50)$. We also consider two distances between the samples, expressed in percentage of the Frobenius diameter of $\mathrm{St}(n,p)$ ($=2\sqrt{p}$). We consider $15\%$ and $32\%$. Given a method $M$ that ran $T$ times with CPU times $t_{M,j}(U_i,\widetilde{U}_i)$ for $j=1,\ldots,T$, the running time $t_M$ of the method $M$ is computed as follows:\vspace{-0.3cm}
\begin{equation}\label{eq:timing}
    t_M = \frac{1}{N}\sum_{i=1}^N \min_{j=1,...,T}t_{M,j}(U_i,\widetilde{U}_i).\vspace{-0.3cm}
\end{equation}
The ``$\min$'' in~\eqref{eq:timing} is more standard than the average for benchmarking, as implemented for the \texttt{@btime} macro in \texttt{Julia} \cite{bezanson2017julia}.
For~\cref{tab:Table1}, we take $N=T=10$. The experiments from~\cref{tab:Table1} demonstrate the effectiveness of~\cref{alg:generalizedStiefel} compared to the shooting method for $\beta\in[0.3,1]$. The shooting method is only competitive with~\cref{alg:generalizedStiefel} when $\beta=1$, i.e., in the case of the Euclidean metric. In particular the accelerated forward iteration appears to be the best choice since it is very close in performance to the other types of iterations in case $\beta\leq\frac{1}{2}$ and outperforms them otherwise. 
    
\begin{table}
    \centering
    \footnotesize
    \begin{tabular}{||c||c|c|c|c|c|c|c|c||}
    \hline
    \multicolumn{9}{||c||}{$\mathrm{St}(80,20),\quad \|\m{U}-\widetilde{U}\|_\mathrm{F}=(0.15\pm0.01)\cdot2\sqrt{p}$}\\
        \hline
         $\beta$ &0.3&0.4&0.5&0.6&0.7&0.8&0.9&1  \\
         \hline
         Scale&\multicolumn{8}{c||}{$\cdot 10^{-2}$ sec.}\\
         \hline
         \hline
         Fixed Forward& 2.8 & 1.0 & \cellcolor{green!70!black!70}0.31 & 0.84 & 1.1 &  1.35 &  1.6 &  1.8\\
         \hline
         Pseudo-Backward 1 it.&1.5& 0.93&  0.31& \cellcolor{green!70!black!70}  0.76  &0.86 & 0.92 &  0.88 & 0.89\\
         \hline
         Pseudo-Backward 2 it.&1.35&  \cellcolor{green!70!black!70} 0.85 & 0.31 & 0.85&  0.84 & 0.85  & 0.86 & 0.93\\
         \hline
         Acc. Forward& \cellcolor{green!70!black!70}1.30 &  0.87  &0.31 & 0.79 &\cellcolor{green!70!black!70} 0.81  &\cellcolor{green!70!black!70}0.85   &\cellcolor{green!70!black!70}0.81 &\cellcolor{green!70!black!70} 0.80\\
         \hline
         p-shooting 3 pt.&2.3 & 2.1  &1.8  &1.8 & 1.5  &1.3 & 15(\xmark) & 0.92\\ 
         \hline
         p-shooting 5 pt.&2.9&  2.7 & 2.2 & 2.2 & 2.0 & 1.8   &1.4 & 1.2\\ 
         \hline
         \hline
         \multicolumn{9}{||c||}{$\mathrm{St}(80,20),\quad \|\m{U}-\widetilde{U}\|_\mathrm{F}=(0.32\pm0.01)\cdot2\sqrt{p}$}\\
        \hline
         $\beta$ &0.3&0.4&0.5&0.6&0.7&0.8&0.9&1  \\
         \hline
         Scale&\multicolumn{8}{c||}{$\cdot 10^{-2}$ sec.}\\
         \hline
         \hline
         Fixed Forward& 3.3& \cellcolor{green!70!black!70} 1.2&  0.44&  \cellcolor{green!70!black!70}0.95&  1.2 & 1.6  &1.9&  2.2\\ 
         \hline
         Pseudo-Backward 1 it.&2.7&  1.3&  0.45&  1.1&  \cellcolor{green!70!black!70}1.2&  1.3 & 1.5 & 1.6\\ 
         \hline
         Pseudo-Backward 2 it.&\cellcolor{green!70!black!70}2.6& 1.4 & \cellcolor{green!70!black!70}0.42&  1.4 & 1.4 & 1.3 & 1.5  &1.6\\ 
         \hline
         Acc. Forward& 2.7& 1.3 &0.45&  1.1&  1.3&\cellcolor{green!70!black!70}  1.3& \cellcolor{green!70!black!70}1.3& \cellcolor{green!70!black!70} 1.3\\ 
         \hline
         p-shooting 3 pt.&4.3&  3.8 & 2.8 & 2.8 & 2.4 & 17(\xmark) & 1.7 & 1.6\\ 
         \hline
         p-shooting 5 pt.&5.3&  4.6&  3.6 & 3.4 & 2.9 & 2.6 & 2.3 &1.8\\ 
         \hline
         \hline
         \multicolumn{9}{||c||}{$\mathrm{St}(100,50),\quad \|\m{U}-\widetilde{U}\|_\mathrm{F}=(0.32\pm0.01)\cdot2\sqrt{p}$}\\
        \hline
         $\beta$ &0.3&0.4&0.5&0.6&0.7&0.8&0.9&1  \\
         \hline
         Scale&\multicolumn{8}{c||}{$\cdot 10^{-1}$ sec.}\\
         \hline
         \hline
         Fixed Forward&5.2 \cellcolor{green!70!black!70}& 1.8 & \cellcolor{green!70!black!70} 0.71&  1.5&  2.1 &  3.0 & 4.3&   7.4\\ 
         \hline
         Pseudo-Backward 1 it.&5.6 & 1.6 \cellcolor{green!70!black!70} & 0.72 & 1.4 & 1.7&   1.9 & 2.0  & 2.1\\ 
         \hline
         Pseudo-Backward 2 it.&5.8 & 1.8  & 0.71 & 1.5 & 1.7&  1.7 & 1.9  & 1.9\\ 
         \hline
         Acc. Forward&8.1 & 1.9  & 0.72  &\cellcolor{green!70!black!70}1.4& \cellcolor{green!70!black!70} 1.7 &  \cellcolor{green!70!black!70}1.7& \cellcolor{green!70!black!70} 1.6 &  1.7\\ 
         \hline
         p-shooting 3 pt.&38(\xmark)& 5.1 & 12(\xmark) & 3.1 & 2.5 & 33(\xmark) & 1.6&  22(\xmark)\\ 
         \hline
         p-shooting 5 pt.&18(\xmark)&  5.5 &  3.5 & 3.3&  2.6 &  2.1 & 1.9 & \cellcolor{green!70!black!70} 1.6\\ 
         \hline
    \end{tabular}
    \caption{Benchmark of~\cref{alg:generalizedStiefel} in its different versions (fixed forward, pseudo-backward with 1 or 2 sub-iterations and accelerated forward). It is compared to the $p$-shooting method \cite[Algorithm~2]{ZimmermannRalf22} with $3$ and $5$ points (starting and ending point included). With only $3$ points, the appearance of failures (\xmark) indicates that more points must be considered for robustness. {\new For each value of $\beta$, the cell of the best performing method is highlighted.}}
    \label{tab:Table1}
    \vspace{-0.9cm}
\end{table}

\section{Conclusion}
In this paper, we have proposed an alternative to the shooting method to compute a geodesic between any two points on the Stiefel manifold. The method generalizes the approach of \cite{Zimmermann17} to the complete family of metrics introduced in \cite{Huper2021}. Our analysis included theoretical guarantees and numerical experiments. Future work may include the design of a more robust initialisation, increasing significantly the current restrictive radius of convergence. A more detailed understanding of the effectiveness of the accelerated forward iteration should also be carried out.
\section{Acknowledgments} The authors would like to thank Pierre-Antoine Absil for his invaluable guidance during the preparation of this manuscript and anonymous referees for their constructive comments and valuable suggestions, which have significantly improved the quality of this work. 

\newpage
\appendix
\section{Rank of logarithm}\label{app:rankdeficiency} Given $U,\widetilde{U}\in\mathrm{St}(n,p)$, we investigate the cases where $\Delta\in\mathrm{Log}_{\beta,\m{U}}(\widetilde{U})$ and
\begin{equation}\label{eq:rank}
        \mathrm{rank}((\m{I} - \m{U}\m{U}^T)\Delta)>\mathrm{rank}((\m{I} - \m{U}\m{U}^T)\widetilde{U}). 
    \end{equation}
An example where~\eqref{eq:rank} holds is for antipodal points ($\widetilde{U}=-U$) on the hypersphere $\mathrm{St}(n,1)$. We show that~\eqref{eq:rank} can only happen when $\widetilde{U}$ belongs to the cut locus of $U$, which is a zero-measure subset of $\mathrm{St}(n,p)$ \cite[Lemma~4.4]{sakai1996riemannian}.
\begin{proposition}
    Assume $U, \widetilde{U}\in\mathrm{St}(n,p)$ and~\eqref{eq:rank} holds. Then $\widetilde{U}$ belongs to the cut locus of $U$.
\end{proposition}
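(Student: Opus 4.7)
The plan is to exploit the isometry group of $(\mathrm{St}(n,p),\langle\cdot,\cdot\rangle_\beta)$. For every $R\in\mathrm{O}(n)$, the left action $U\mapsto RU$ is an isometry of the $\beta$-metric; this follows in one line from the definition $\langle\Delta,\tilde\Delta\rangle_\beta=\Tr\Delta^T(I-(1-\beta)UU^T)\tilde\Delta$ together with $(RU)(RU)^T=RUU^TR^T$ and the cyclicity of the trace. My strategy is to construct an $R$ that simultaneously fixes $U$ and $\widetilde{U}$ but moves $\Delta$. Since isometries commute with the Riemannian exponential, this would yield $\mathrm{Exp}_{\beta,U}(R\Delta)=R\,\mathrm{Exp}_{\beta,U}(\Delta)=R\widetilde{U}=\widetilde{U}$ and $\|R\Delta\|_\beta=\|\Delta\|_\beta=d_\beta(U,\widetilde{U})$. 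Thus $R\Delta\in\mathrm{Log}_{\beta,U}(\widetilde{U})$ would be a second minimal logarithm distinct from $\Delta$, placing $\widetilde{U}$ on the cut locus of $U$.

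To build $R$, set $r_0\coloneq\mathrm{rank}((I-UU^T)\widetilde{U})$ and choose $V\in\mathrm{St}(n,r_0)$ whose columns form an orthonormal basis for the column space of $(I-UU^T)\widetilde{U}$; by construction $V^T U=0$. Extend $[U\ V]$ to a complete orthonormal frame $[U\ V\ V']$ of $\mathbb{R}^n$ with $V'\in\mathrm{St}(n,n-p-r_0)$. The hypothesis~\eqref{eq:rank} combined with the trivial bound $\mathrm{rank}((I-UU^T)\Delta)\leq n-p$ forces $r_0\leq n-p-1$, so $V'$ has at least one column. For any $R_0\in\mathrm{O}(n-p-r_0)$ I then set
\begin{equation*}
  R\coloneq[U\ V\ V']\begin{bmatrix} I_p & 0 & 0 \\ 0 & I_{r_0} & 0 \\ 0 & 0 & R_0 \end{bmatrix}[U\ V\ V']^T\in\mathrm{O}(n).
\end{equation*}
Then $RU=U$, and since $\widetilde{U}=UU^T\widetilde{U}+(I-UU^T)\widetilde{U}\in\mathrm{span}[U\ V]$, one also has $R\widetilde{U}=\widetilde{U}$.

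It remains to choose $R_0$ so that $R\Delta\neq\Delta$. Decomposing $\Delta=UA+VB_1+V'B_2$ with $A\in\mathrm{Skew}(p)$, $B_1\in\mathbb{R}^{r_0\times p}$, and $B_2\in\mathbb{R}^{(n-p-r_0)\times p}$, a direct computation gives $R\Delta=UA+VB_1+V'R_0B_2$. Because $\mathrm{rank}((I-UU^T)\Delta)=\mathrm{rank}\bigl[\begin{smallmatrix}B_1\\B_2\end{smallmatrix}\bigr]>r_0\geq\mathrm{rank}(B_1)$, the block $B_2$ cannot vanish, so the choice $R_0\coloneq -I_{n-p-r_0}$ yields $R_0B_2=-B_2\neq B_2$ and hence $R\Delta\neq\Delta$, completing the proof. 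The only step that I expect to require genuine care is verifying the two isometry identities for the whole $\beta$-family (metric invariance under $\mathrm{O}(n)$, together with the commutation $R\circ\mathrm{Exp}_{\beta,U}=\mathrm{Exp}_{\beta,RU}\circ R$ that follows from the uniqueness of the geodesic initial-value problem). The remainder is bookkeeping, and the potentially degenerate case $n-p=r_0$ is excluded by the strict rank inequality itself.
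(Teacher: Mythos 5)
Your proof is correct, and it arrives at the same family of alternative minimal tangent vectors as the paper, but via a genuinely different verification mechanism. The paper works directly with the explicit formula for $\mathrm{Exp}_{\beta,U}$ from~\cref{thm:geodesics}: after decomposing $\Delta = UA + Q_1 B_1 + Q_2 B_2$ with $Q_2^T U = Q_2^T\widetilde{U}=0$, it shows by substituting $B_2 \mapsto R B_2$ (for any $R\in\mathrm{O}(q)$) into the exponential that the endpoint is unchanged, so $\widetilde\Delta = UA + Q_1 B_1 + Q_2 R B_2$ is another minimizer. You instead realize the very same transformation extrinsically as left multiplication by the $\mathrm{O}(n)$-element $R$ built from $R_0 = -I$ acting on $V'$; fixing $U$ and $\widetilde{U}$, the equivariance of the exponential under isometries then does all the work without ever invoking the explicit geodesic formula. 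Both proofs reduce to the same observation — that the component of $\Delta$ orthogonal to $\mathrm{col}([U\ \widetilde{U}])$, which exists precisely because of the rank hypothesis~\eqref{eq:rank}, enjoys a free rotational symmetry — and both close by citing that multiple minimizing geodesics place $\widetilde{U}$ on the cut locus. Your route is more conceptual and independent of the closed-form exponential; the paper's is more self-contained given that the formula is already on the table, and it exhibits the extra minimizers intrinsically as elements of $T_U\mathrm{St}(n,p)$ without detouring through the ambient isometry group. One bookkeeping point you handled correctly and that is worth keeping explicit: you need $R^T U = U$ (not just $RU=U$) to see that $R\Delta$ lies in $T_U\mathrm{St}(n,p)$ and that the pointwise metric at $U$ is preserved; this follows from $R\in\mathrm{O}(n)$, and is implicitly what makes the isometry fix the fiber over $U$.
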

\begin{proof}
Assume $\Delta\in \mathrm{Log}_{\beta,U}(\widetilde{U})$. By definition, we can always write $\Delta = U A +  \begin{bmatrix}
    Q_1&Q_2
\end{bmatrix}\begin{bmatrix}
    B_1\\
    B_2
\end{bmatrix}$ where $\mathrm{col}([U\  Q_1])=\mathrm{col}([U\  \widetilde{U}])$ and $Q_2^TU=Q_2^T\widetilde{U} = 0$. By the assumption~\eqref{eq:rank}, we have $Q_2,B_2\neq 0$. Let $Q_2\in\mathrm{St}(n,q)$ with $q\geq1$. Then, we have
\begin{align*}
    \widetilde{U}&=\mathrm{Exp}_{\beta,U}(\Delta)\\
    \iff\begin{bmatrix}
        U^T\widetilde{U}\\
        Q_1^T\widetilde{U}\\
        0
    \end{bmatrix} &= \exp\begin{bmatrix}
            2\beta A&\begin{matrix}
                -\m{B}_1^T&
                -\m{B}_2^T
            \end{matrix}\\
            \begin{matrix}
                \m{B}_1\\
                \m{B}_2
            \end{matrix}&\m{0}_{n-p}
        \end{bmatrix}I_{n\times p}\exp(\tau A)\\
        \iff \begin{bmatrix}
        U^T\widetilde{U}\\
        Q_1^T\widetilde{U}\\
        0
    \end{bmatrix} &= \exp\begin{bmatrix}
            2\beta A&\begin{matrix}
                -\m{B}_1^T&
                -(R\m{B}_2)^T
            \end{matrix}\\
            \begin{matrix}
                \m{B}_1\\
                R\m{B}_2
            \end{matrix}&\m{0}_{n-p}
        \end{bmatrix}I_{n\times p}\exp(\tau A)\text{ for all } R\in \mathrm{O}(q).
\end{align*}
Therefore, for all $R\in \mathrm{O}(q)$ and $\widetilde{\Delta} := U A +  \begin{bmatrix}
    Q_1&Q_2
\end{bmatrix}\begin{bmatrix}
    B_1\\
    R B_2
\end{bmatrix}$, $\mathrm{Exp}_{\beta,U}(\widetilde{\Delta})=\widetilde{U}$ and $\|\widetilde{\Delta}\|_\beta=\|\Delta\|_\beta$ yield $\widetilde{\Delta}\in\mathrm{Log}_{\beta, U}(\widetilde{U})$. In conclusion, there is more than one minimal geodesic from $U$ to $\widetilde{U}$ and $\widetilde{U}$ is on the cut locus of $U$ by \cite[Proposition~4.1]{sakai1996riemannian}.
\end{proof}
\section{Initialization of $V_0$}\label{app:initialization} Both~\cref{alg:Zimmermannsalgorithm} and~\cref{alg:generalizedStiefel} ask for an initialization $V_0=\left[\begin{smallmatrix}
    M&O_0\\
    N&P_0
\end{smallmatrix}\right]\in\mathrm{SO}(2p)$. $M=U^T\widetilde{U}$ is fixed by the problem. However, $N$, $O_0$, $P_0$ and $Q$ have degrees of freedom to obtain the best performance. Let us recall their meaning. From~\cref{thm:geodesics}, we know that for $n\leq2p$, the geodesic can be considered as a curve evolving in $\mathrm{St}(n, 2p)$, from which we only retain the $p$ first columns. In $\mathrm{St}(n, 2p)$, the geodesic starts at $[U\ Q]$ and ends at $[\widetilde{U}\ \widetilde{Q}]$. The underlying goal of~\cref{alg:Zimmermannsalgorithm} and~\cref{alg:generalizedStiefel} is to obtain $Q,\widetilde{Q}\in\mathrm{St}(n,p)$ minimizing the distance from $[U\ Q]$ to $[\widetilde{U}\ \widetilde{Q}]$. From this framework, $V_0$ can be interpreted as 
\begin{equation}
    V_0 := \begin{bmatrix}
        U^T\widetilde{U}& U^T\widetilde{Q}\\
        Q^T\widetilde{U}&Q^T\widetilde{Q}
    \end{bmatrix}\in\mathrm{SO}(2p).
\end{equation}
A heuristic initialization is finding $Q,\widetilde{Q}$ minimizing the geodesic distance $d(I, V_0)$ in $\mathrm{SO}(2p)$. It is known that $d(I, V_0)\propto \|\log(V_0)\|_F$ (see, e.g., \cite{EdelmanArias98}). Since $U, \widetilde{U}$ are fixed, an easier problem is to solve $\min_{Q,\widetilde{Q}} \|I-Q^T\widetilde{Q}\|_F$. Since the column spaces of $Q,\widetilde{Q}$ are known, this second problem is an Orthogonal Procrustes problem, solved by the SVD. It admits an infinite set of solutions, given by all orthogonal similarity transformations of $Q^T\widetilde{Q}$. Let us build these solutions. If we take $\widehat{Q}\widehat{N} = (I-U U^T)\widetilde{U}$ (with $\widehat{Q}\in\mathrm{St}(n,p)$ and $\widehat{Q}^TU=0$), we can start with any $\left[\begin{smallmatrix}
    M&\widehat{O}_0\\
    \widehat{N}&\widehat{P}_0
\end{smallmatrix}\right]\in\mathrm{SO}(2p)$. The idea from \cite{Zimmermann17} is to compute $\widehat{P}_0 = R\Sigma\widetilde{R}^T$, a singular value decomposition.
\begin{itemize}
    \item Method from \cite{Zimmermann17}: take $Q:=\widehat{Q}$, $N :=\widehat{N}$, $O_0 := \widehat{O}_0\widetilde{R} R^T$, $P_0 = R\Sigma R^T$.
    \item Our method: take $Q:=\widehat{Q}R$, $N = R^T\widehat{N}$, $O_0 = \widehat{O}_0\widetilde{R}$, $P_0 = \Sigma$.
\end{itemize}
Our method corresponds to a similarity transformation of the method from \cite{Zimmermann17} by $\left[\begin{smallmatrix}
    I&0\\
    0&R
\end{smallmatrix}\right]$. It offers a diagonal matrix $P_0$.
\section{Intermediate results for~\cref{sec:convergencebackward}}\label{app:appendixA} This appendix gathers intermediate results in the proof of~\cref{thm:convergence}. It allows to keep a clearer narration in~\cref{sec:convergencebackward}. 
First, \cref{lem:normofX} allows to relate the norm of two matrices appearing in~\cref{thm:convergence}.
\begin{lem}\label{lem:normofX}
    Let $\m{L}_k:=\begin{bmatrix}
        \m{A}_k&-\m{B}_k^T\\
        \m{B}_k&\m{C}_k
    \end{bmatrix}$ and $\m{X}_k:=\begin{bmatrix}
        2\beta\m{A}_k&-\m{B}_k^T\\
        \m{B}_k&\m{C}_k
    \end{bmatrix}$ 
    produced by~\cref{alg:generalizedStiefel}, then $\|\m{X}_k\|_2 \leq (1+ |2\beta-1|)\|\m{L}_k\|_2$ and $\|\m{L}_k\|_2 \leq  \left(1+\frac{|2\beta-1|}{2\beta}\right)\|\m{X}_k\|_2$.
\end{lem}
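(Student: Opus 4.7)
The plan is to observe that $\m{X}_k$ and $\m{L}_k$ differ only in their top-left block: specifically, $\m{X}_k - \m{L}_k = (2\beta - 1)\,\m{E}_k$, where $\m{E}_k := \begin{bmatrix}\m{A}_k & \m{0}\\ \m{0} & \m{0}\end{bmatrix}$. Since $\m{E}_k$ is block-diagonal with one nontrivial block equal to $\m{A}_k$ and the other the zero block, the spectral norm satisfies $\|\m{E}_k\|_2 = \|\m{A}_k\|_2$. The whole proof then reduces to applying the triangle inequality together with the fact that the spectral norm of any diagonal sub-block is bounded by the spectral norm of the full matrix.

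For the first inequality, I would write $\m{X}_k = \m{L}_k + (2\beta-1)\m{E}_k$ and apply the triangle inequality to obtain $\|\m{X}_k\|_2 \le \|\m{L}_k\|_2 + |2\beta-1|\,\|\m{A}_k\|_2$. Since $\m{A}_k$ is the top-left diagonal block of $\m{L}_k$, we have $\|\m{A}_k\|_2 \le \|\m{L}_k\|_2$, yielding the bound $\|\m{X}_k\|_2 \le (1+|2\beta-1|)\|\m{L}_k\|_2$.

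For the second inequality, I would proceed symmetrically by writing $\m{L}_k = \m{X}_k - (2\beta-1)\m{E}_k$, so $\|\m{L}_k\|_2 \le \|\m{X}_k\|_2 + |2\beta-1|\,\|\m{A}_k\|_2$. This time $2\beta\m{A}_k$ is the top-left diagonal block of $\m{X}_k$, so $\|2\beta\m{A}_k\|_2 \le \|\m{X}_k\|_2$, which (using $\beta > 0$) gives $\|\m{A}_k\|_2 \le \frac{1}{2\beta}\|\m{X}_k\|_2$. Substituting produces $\|\m{L}_k\|_2 \le \bigl(1 + \tfrac{|2\beta-1|}{2\beta}\bigr)\|\m{X}_k\|_2$.

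There is no serious obstacle here: the lemma is a purely algebraic decomposition combined with two elementary norm facts (triangle inequality and the sub-multiplicative behavior of the spectral norm with respect to extracting a diagonal block). The only subtlety worth flagging is that the second bound requires $2\beta > 0$ so that the division is valid, which holds by the standing assumption $\beta > 0$ for the $\beta$-metric.
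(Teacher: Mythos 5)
Your proof is correct and takes essentially the same approach as the paper: both write $\m{X}_k - \m{L}_k = (2\beta-1)\left[\begin{smallmatrix}\m{A}_k & 0\\ 0 & 0\end{smallmatrix}\right]$ (and the reverse), apply the triangle inequality, and then bound the diagonal-block term by the full matrix norm. You simply spell out the sub-block norm bound that the paper leaves implicit.
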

\begin{proof}
Simply notice that
    \begin{equation*}
        X_k = L_k +
        \begin{pmatrix}
        (2\beta-1) A_k & 0 \\
        0 & 0
        \end{pmatrix}\Rightarrow
        \|X_k\|_2 \leq \|L_k\|_2 + |2\beta-1|\|L_k\|_2,
    \end{equation*}
    and
    \begin{equation*}
        L_k = X_k + \begin{pmatrix}
        (1-2\beta) A_k & 0 \\
        0 & 0
        \end{pmatrix} \Rightarrow \|L_k\|_2\leq \|X_k\|_2+\frac{|2\beta-1|}{2\beta}\|X_k\|_2.
    \end{equation*}
    This concludes the proof.
\end{proof}
Then, \cref{lem:bound_hot_theta} allows to bound the higher order terms from \eqref{eq:expansion_of_theta} in terms of the norm of $\|A_k-A_{k+1}\|_2$.
\begin{lem}\label{lem:bound_hot_theta}
    In  \eqref{eq:expansion_of_theta}, we can bound the higher order terms by  $\|\mathrm{H.O.T}_\Theta(4)\|_2\leq|\tau|(|\tau|\delta)^2\log\left(\frac{1}{1-2|\tau|\delta}\right) \|A_k-A_{k+1}\|_2$.
\end{lem}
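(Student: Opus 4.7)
The plan is to exploit the identity $\log(\exp(\tau A_k)\exp(-\tau A_k)) = 0$ so as to extract a factor $\|A_k - A_{k+1}\|_2$ from every high-order BCH term by a mean-value argument. First I set $X := \tau A_k$, $Y := -\tau A_{k+1}$, $D := A_k - A_{k+1}$, $E := X + Y = \tau D$, and introduce the interpolation $Y_s := -X + sE = -\tau\bigl((1-s)A_k + sA_{k+1}\bigr)$ for $s\in[0,1]$. Since $Y_s$ is (up to the scalar $-\tau$) a convex combination of $A_k$ and $A_{k+1}$, the spectral norm satisfies $\|Y_s\|_2 \leq |\tau|\delta$ throughout the interpolation, a bound that will be used repeatedly in the subsequent estimates.

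Writing the BCH expansion as $\log(\exp X \exp Y) = \sum_{l \geq 1} z_l(X, Y)$, where $z_l$ is the homogeneous Lie polynomial of degree $l$, the identity $\log(\exp X \exp(-X)) = 0$ together with homogeneity forces $z_l(X, -X) = 0$ for every $l\geq 1$. The fundamental theorem of calculus therefore rewrites each high-order term as
\begin{equation*}
    z_l(X, Y) = z_l(X, -X + E) - z_l(X, -X) = \int_0^1 (D_2 z_l)(X, Y_s)[E]\, ds,
\end{equation*}
where $D_2$ denotes the derivative in the second argument. This immediately pulls out the desired factor $\|E\|_2 = |\tau|\,\|A_k - A_{k+1}\|_2$ from $\mathrm{H.O.T}_\Theta(4) = \sum_{l \geq 4} z_l(X, Y)$.

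To bound the integrand, I would expand each $z_l$ through its Goldberg representation $z_l(X, Y) = \sum_{|w|=l} c_w\, w(X, Y)$, where $w$ ranges over words of length $l$ in the alphabet $\{X, Y\}$ and $\sum_{|w|=l}|c_w| \leq 1$ by \cite[Lem.~A.1]{Zimmermann17}, a consequence of \cite{THOMPSON1989}. Differentiating each word in $Y$, using $\|X\|_2, \|Y_s\|_2 \leq |\tau|\delta$, and summing the $b_w$ differentiation contributions of a word containing $b_w$ letters~$Y$ produces an estimate of the form $\|(D_2 z_l)(X, Y_s)[E]\|_2 \leq \|E\|_2(|\tau|\delta)^{l-1} C_l$ with $C_l$ controlled by the Goldberg weights. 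Summing over $l\geq 4$ and recasting the result through the identity $-\log(1 - 2|\tau|\delta) = \sum_{m \geq 1}(2|\tau|\delta)^m/m$, which converges under the hypothesis $2|\tau|\delta < 1$ listed in~\cref{cond:delta}, would yield the claimed bound $|\tau|(|\tau|\delta)^2\log(1/(1-2|\tau|\delta))\|A_k - A_{k+1}\|_2$.

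The main obstacle will be the precise combinatorial bookkeeping needed to make the crude per-word estimate produce exactly the $(|\tau|\delta)^2\log(1/(1-2|\tau|\delta))$ factor rather than a looser series of the form $R^m/(1-R)^\alpha$. A cleaner route likely proceeds through an integral form of the BCH series (e.g., Dynkin's formula) applied to $\exp(X)\exp(Y_s)$, in which the logarithmic factor arises naturally from the scalar estimate $\|\log(I + K)\|_2 \leq -\log(1 - \|K\|_2)$, once the degree-$\leq 3$ contributions that are already written out explicitly in~\eqref{eq:expansion_of_theta} have been separated off.
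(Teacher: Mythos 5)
Your FTC/interpolation device is an elegant way to extract the factor $\|E\|_2 = |\tau|\,\|A_k-A_{k+1}\|_2$, but as you yourself suspect, it does not deliver the logarithmic constant, and I think the obstacle is fatal to the word-form route rather than mere bookkeeping. If you expand $z_l$ in the Goldberg \emph{word} form $z_l = \sum_{|w|=l}c_w\,w$ and apply the Leibniz rule in the second argument, each word contributes $b_w$ terms, where $b_w$ is the number of $Y$-letters in $w$, so the weight $C_l := \sum_{|w|=l}|c_w|\,b_w$ can be as large as $l$ (since $\sum_{|w|=l}|c_w|\leq 1$ gives only $C_l\leq l$). The resulting series $\sum_{l\geq 4}C_l\,(|\tau|\delta)^{l-1}$ then behaves like $R^3/(1-R)^2$, exactly the ``looser series'' you anticipate — not $-\log(1-R)$. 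The $1/l$ that produces the logarithm is not available in the word form of the BCH series.

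The paper's proof avoids the FTC entirely by working in the \emph{commutator} form of the Goldberg series from \cite{NewmanThompson87}, $z_l = \sum_{w_l}\frac{g_{w_l}}{l}[w_l(\tau A_k,\tau A_{k+1})]$, where the $1/l$ is intrinsic to the representation. Every nonvanishing nested commutator $[w_l]$ must at some level bracket $\tau A_k$ against $\tau A_{k+1}$, and the single identity $[A_k,A_{k+1}]=[A_k,A_{k+1}-A_k]$ extracts the factor $\|A_k-A_{k+1}\|_2$ \emph{exactly once}, while the remaining nesting levels each contribute $2|\tau|\delta$. Combining this with $\sum_{w_l}|g_{w_l}|/l\leq 1/l$ (\cite{THOMPSON1989}, \cite[Lem.~A.1]{Zimmermann17}) gives the series $\sum_l \frac{1}{l}(2|\tau|\delta)^{\,l}$ and hence the logarithm. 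The structural difference from your approach is precisely that the commutator recurrence pulls out the difference once and keeps the $1/l$, whereas your Leibniz differentiation pulls it out $b_w$ times, and the $b_w/l$ cancellation destroys the logarithm. Your closing suggestion via $\|\log(I+K)\|\leq -\log(1-\|K\|)$ could recover a log, but it acts on the full series rather than the degree-$\geq 4$ tail, so cleanly separating off the explicit low-order terms of \eqref{eq:expansion_of_theta} would be a genuine additional step you have not provided.
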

\begin{proof}
    We know from the commutator version of the Goldberg series \cite{NewmanThompson87} (or BCH formula) that
\begin{equation*}
    \|\mathrm{H.O.T}_\Theta(4)\|_2\leq\sum_{\  l=4}^\infty\sum_{w_l}\frac{|g_{w_l}|}{l}\| [w_l(\tau A_k,\tau A_{k+1})] \|_2,
\end{equation*}
where $w_l(\tau A_k,\tau A_{k+1})$ is a word of length $l$ in the alphabet $\{\tau A_k,\tau A_{k+1}\}$ and the notation $[w_l(\tau A_k, \tau A_{k+1})]$, $[w_l]$ for short, is the extended commutator defined on this word \cite{NewmanThompson87}. $g_{w_l}$~is the Goldberg coefficient associated to $w_l$ \cite{Goldberg1956}. Since $\|[A,B]\|_2\leq 2\|A\|_2\|B\|_2$ and $\|[A,B]\|_2\leq 2\|A\|_2\|B-A\|_2$, it follows by recurrence that $\|[w_l]\|_2\leq 2^{l-1}\delta^{l-1}\tau^l\|A_k-A_{k+1}\|_2$.  Moreover, for words of length $l$, we have $\sum_{w_l}\frac{|g_{w_l}|}{l}\leq\frac{2}{l}$ \cite{THOMPSON1989}. \cite[Lem.~A.1]{Zimmermann17} even decreased this bound to $\sum_{w_l}\frac{|g_{w_l}|}{l}\leq\frac{1}{l}$. It follows that 
\begin{align*}
    \|\mathrm{H.O.T}_\Theta(4)\|_2&\leq\sum_{l=4}^\infty \sum_{w_l}\frac{|g_{w_l}|}{l} 2^{l-1}\delta^{l-1}\tau^l \|A_k-A_{k+1}\|_2 \\
    &\leq |\tau|(|\tau|\delta)^2 \|A_k-A_{k+1}\|_2\sum_{l=1}^\infty (2|\tau|\delta)^l \left(\sum_{ w_{l+3}}\frac{|g_{w_{l+3}}|}{l+3}\right)\\
    \end{align*}
   \begin{align*}
     \|\mathrm{H.O.T}_\Theta(4)\|_2&\leq |\tau|(|\tau|\delta)^2 \|A_k-A_{k+1}\|_2\sum_{l=1}^\infty \frac{(2|\tau|\delta)^l}{l}\\
    &\leq  |\tau|(|\tau|\delta)^2\log\left(\frac{1}{1-2|\tau|\delta}\right) \|A_k-A_{k+1}\|_2,
\end{align*}
where $2|\tau|\delta<1$ stands by~\cref{cond:delta}.
\end{proof}
\cref{lem:partialproofthmconvergence} shows that it is indeed valid to go
from~\eqref{eq:thetafirstbound} to~\eqref{eq:equationfornormdeltaA} in the proof of~\cref{thm:convergence}.
\begin{lem}\label{lem:partialproofthmconvergence}
   Equation \eqref{eq:equationfornormdeltaA} follows from~\eqref{eq:thetafirstbound}.
\end{lem}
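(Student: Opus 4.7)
The plan is to expand the top-left $p\times p$ block of the fundamental equation~\eqref{eq:fundamentalequation} using the BCH formula~\eqref{eq:BCHformula} applied to $X_k$ and $Y_k$ from~\eqref{eq:XandY}, isolate the term $2\beta(A_{k+1}-A_k)-\Theta_k$, take the spectral norm, and then carefully match each resulting contribution to the five coefficients $1,\ 2\beta\delta,\ \tfrac{4\beta^2}{3}\delta^2,\ \tfrac{\delta^2}{6},\ \tfrac{\delta^3}{6-\delta^2}$ appearing next to $\eta\|A_k-A_{k+1}\|_2$, plus the coefficient $\tfrac{\delta^2}{6-\delta^2}$ next to $\|C_k\|_2$. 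The two bounds driving the whole computation are~\eqref{eq:thetafirstbound}, i.e.\ $\|\Theta_k\|_2\leq\eta\|A_k-A_{k+1}\|_2$, and the Sylvester bound~\eqref{eq:bound_gamma_by_C}, i.e.\ $\|\Gamma_k\|_2\leq \tfrac{6}{6-\delta^2}\|C_k\|_2$, together with $\|A_k\|_2,\|B_k\|_2,\|C_k\|_2<\delta$.

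First I would perform the block-multiplication needed to extract the top-left blocks of the iterated commutators. Since $Y_k$ is block-diagonal, the off-diagonal blocks of $X_k$ interact cleanly: one gets $[X_k,Y_k]_{11}=2\beta[A_k,\Theta_k]$, $[X_k,[X_k,Y_k]]_{11}=4\beta^2[A_k,[A_k,\Theta_k]]+2B_k^T\Gamma_k B_k-(B_k^TB_k\Theta_k+\Theta_k B_k^TB_k)$, $[Y_k,[X_k,Y_k]]_{11}=2\beta[\Theta_k,[A_k,\Theta_k]]$, and $[Y_k,[X_k,[X_k,Y_k]]]_{11}=[\Theta_k,\,4\beta^2[A_k,[A_k,\Theta_k]]+2B_k^T\Gamma_k B_k-(B_k^TB_k\Theta_k+\Theta_k B_k^TB_k)]$. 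Second, I would combine these according to the BCH formula up to 4th order to get
\[
2\beta A_{k+1}=2\beta A_k+\Theta_k+\beta[A_k,\Theta_k]+\tfrac{1}{12}\bigl(\text{3rd-order block}\bigr)-\tfrac{1}{24}\bigl(\text{4th-order block}\bigr)+\text{H.O.T}_A(5).
\]
Third, taking norms and applying $\|[X,Y]\|_2\leq 2\|X\|_2\|Y\|_2$ iteratively, the five targeted coefficients arise as follows: $\|\Theta_k\|_2$ gives the coefficient $1$; $\beta\|[A_k,\Theta_k]\|_2\leq 2\beta\delta\,\eta\|A_k-A_{k+1}\|_2$ gives $2\beta\delta$; $\tfrac{1}{12}\|4\beta^2[A_k,[A_k,\Theta_k]]\|_2\leq\tfrac{4\beta^2}{3}\delta^2\,\eta\|A_k-A_{k+1}\|_2$ gives $\tfrac{4\beta^2}{3}\delta^2$; $\tfrac{1}{12}\|B_k^TB_k\Theta_k+\Theta_k B_k^TB_k\|_2\leq\tfrac{\delta^2}{6}\eta\|A_k-A_{k+1}\|_2$ gives $\tfrac{\delta^2}{6}$; and $\tfrac{1}{12}\|2B_k^T\Gamma_kB_k\|_2\leq\tfrac{\delta^2}{6-\delta^2}\|C_k\|_2$ produces the free $\|C_k\|_2$ term.

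The delicate point—and the main obstacle—is producing the $\tfrac{\delta^3}{6-\delta^2}\eta\|A_k-A_{k+1}\|_2$ contribution, which does not come from 3rd-order BCH. It must be extracted from the 4th-order BCH term $-\tfrac{1}{24}[Y_k,[X_k,[X_k,Y_k]]]$, specifically from the cross piece $[\Theta_k,\,2B_k^T\Gamma_k B_k]$, by estimating
\[
\tfrac{1}{24}\bigl\|[\Theta_k,2B_k^T\Gamma_k B_k]\bigr\|_2\leq \tfrac{1}{24}\cdot 4\|\Theta_k\|_2\|B_k\|_2^2\|\Gamma_k\|_2\leq \tfrac{1}{24}\cdot 4\eta\|A_k-A_{k+1}\|_2\cdot\delta^2\cdot\tfrac{6\delta}{6-\delta^2}=\tfrac{\delta^3\eta}{6-\delta^2}\|A_k-A_{k+1}\|_2,
\]
where the key trick is to bound $\|\Gamma_k\|_2\leq\tfrac{6}{6-\delta^2}\|C_k\|_2\leq\tfrac{6\delta}{6-\delta^2}$ using the hypothesis $\|C_k\|_2\leq\delta$, so that this genuinely cross term is recast as a linear term in $\|A_k-A_{k+1}\|_2$ rather than being left as $\mathcal{O}(\|A_k-A_{k+1}\|_2\|C_k\|_2)$. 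The remaining parts of the 4th-order block, namely $[\Theta_k,4\beta^2[A_k,[A_k,\Theta_k]]]$ and $[\Theta_k,B_k^TB_k\Theta_k+\Theta_k B_k^TB_k]$, as well as the 3rd-order piece $\tfrac{1}{12}\cdot 2\beta[\Theta_k,[A_k,\Theta_k]]$, all carry at least two $\Theta_k$ factors and are bounded by $\mathcal{O}(\|A_k-A_{k+1}\|_2^2)$ via~\eqref{eq:thetafirstbound}, while the 5-letter and higher BCH commutators are absorbed into $\|\text{H.O.T}_A(5)\|_2$. Collecting all these bounds yields exactly~\eqref{eq:equationfornormdeltaA}.
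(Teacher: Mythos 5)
Your proposal is correct and follows essentially the same route as the paper's proof: expand the top-left $p\times p$ block of the fundamental equation via BCH through fourth order, bound each commutator term using $\|\Theta_k\|_2\leq\eta\|A_k-A_{k+1}\|_2$ and $\|\Gamma_k\|_2\leq\frac{6}{6-\delta^2}\|C_k\|_2\leq\frac{6\delta}{6-\delta^2}$, and absorb the quadratic-in-$\Theta_k$ pieces into $\mathcal{O}(\|A_k-A_{k+1}\|_2^2)$. In particular, you correctly identify both the block structure of the iterated commutators and the key trick of recasting the fourth-order cross term $-\frac{1}{24}\cdot2[\Theta_k,B_k^T\Gamma_kB_k]$ as a $\frac{\delta^3}{6-\delta^2}\eta\|A_k-A_{k+1}\|_2$ contribution via $\|C_k\|_2<\delta$, which is exactly the bound the paper uses.
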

\begin{proof} 
The top-left  $p\times p$ block of the BCH series expansion of $\m{A}_{k+1}$ based on the fundamental equation~\eqref{eq:fundamentalequation} yields

\begin{align}
    \label{eq:equationfordeltaA}
    2\beta(\m{A}_{k+1}-\m{A}_k)&=\m{\Theta}_k+\beta [\m{A}_k,\m{\Theta}_k]\\
    \nonumber
    &+\frac{1}{12}\Big(4\beta^2[\m{A}_k, [\m{A}_k,\m{\Theta}_k]]-\m{B}_k^T\m{B}_k\m{\Theta}_k-\m{\Theta}_k\m{B}_k^T\m{B}_k+2\m{B}_k^T\m{\Gamma}_k\m{B}_k\\
    \nonumber
    &-2\beta[\m{\Theta}_k,[\m{A}_k,\m{\Theta}_k]]\Big)-\frac{1}{24}\Big(2[\m{\Theta}_k,\m{B}_k^T\m{\Gamma}_k\m{B}_k]\Big){\new +\mathcal{O}(\|\Theta_k\|_2^2)}\\
    \nonumber
    &+\text{H.O.T}_{\m{A}}(5).
\end{align} 
Equation \eqref{eq:equationfordeltaA} features many terms that we tackle one by one. All the terms can be bounded by leveraging~\eqref{eq:thetafirstbound} and $\|\Gamma_k\|_2\leq \frac{6}{6-\delta^2}\|C_k\|_2\leq\frac{6\delta}{6-\delta^2}$ \cite{ZimmermannRalf22}:
\begin{itemize}
        \item[\ding{226}] $\|[\m{A}_k,\m{\Theta}_k]\|_2\leq 2 \|\m{A}_k \|_2 \|\m{\Theta}_k\|_2\leq 2\delta\eta\|\m{A}_k-\m{A}_{k+1}\|_2.$
        \item[\ding{226}] $\|[\m{A}_k, [\m{A}_k,\m{\Theta}_k]]\|_2\leq 2\|\m{A}_k \|_2 \|[\m{A}_k,\m{\Theta}_k]\|_2\leq 4\delta^2\eta \|\m{A}_k-\m{A}_{k+1}\|_2.$
        \item[\ding{226}] $\|\m{B}_k^T\m{B}_k\m{\Theta}_k\|_2 \leq \delta^2 \eta\|\m{A}_k-\m{A}_{k+1}\|_2.$
       \item[\ding{226}] $\|\m{B}_k^T\m{\Gamma}_k\m{B}_k\|_2\leq \frac{6\delta^2}{6-\delta^2}\|\m{C}_k\|_2.$
       \item[\ding{226}] $\|[\m{\Theta}_k,[\m{A}_k,\m{\Theta}_k]]\| \in \mathcal{O}(\|\m{A}_k-\m{A}_{k+1}\|_2^2).$
       \item[\ding{226}] $\|[\m{\Theta}_k,\m{B}_k^T\m{\Gamma}_k\m{B}_k]\|_2\leq 2 \delta^2 \|\m{\Gamma}_k\|_2\|\m{\Theta}_k\|_2\leq \frac{12\delta^3}{6-\delta^2}\eta\|\m{A}_k-\m{A}_{k+1}\|_2.$
       \item[\ding{226}]{\new  $\mathcal{O}(\|\Theta_k\|_2^2)\in\mathcal{O}(\|\m{A}_k-\m{A}_{k+1}\|_2^2)$}
    \end{itemize}
    Inserting all these terms in~\eqref{eq:equationfordeltaA} yields~\eqref{eq:equationfornormdeltaA}.
\end{proof}

\section{An alternative algorithm to solve~\cref{prob:subproblem}}\label{app:shootingsubproblem}
\cref{alg:subproblemshooting} proposes a shooting method on $\mathrm{SO}(2p)$ to solve~\cref{prob:subproblem}, inspired from \cite[Algorithm~1]{BrynerDarshan17}. In~\cref{prob:subproblem}, notice that the initial shooting direction is $\m{M}:=\Dot{\gamma}(0)=\left[\begin{smallmatrix}
    \m{D}&-\m{B}^T\\
    \m{B}&\m{C}
\end{smallmatrix}\right]$. Starting from an initial guess for $\m{M}_0=\Dot{\gamma}_0(0)$, $\m{M}_k$ is updated  using an approximate parallel transport of the error vector $\D_k:=\m{V}-\gamma_{k}(1)$ along the curve $\gamma_{k}$. We follow the method of \cite[Algorithm~1]{BrynerDarshan17} to approximate this parallel transport of $\D_k$ to the tangent space of $\gamma_{k}(0)=\m{I}_{2p}$,  written $T_{{\m{I}_{2p}}}\mathrm{SO}(2p)$: $\D_k$ is sequentially projected on $T_{\gamma_{k}(t)}\mathrm{SO}(2p)$ for $t\in[t_m,t_{m-1},...,t_1]$ with $t_m$=1 and $t_1=0$. The pseudo-code of the method is provided in~\cref{alg:subproblemshooting}.
\begin{algorithm}
    \caption{The subproblem's shooting algorithm}
    \label{alg:subproblemshooting}
    \begin{algorithmic}[1]
        \STATE \textbf{INPUT:} Given $\m{V}\in\mathrm{SO}(2p)$, $\beta>0$, $\varepsilon>0$ and $[t_1,t_2,...,t_m]$ with $ t_1=0$ and $t_m=1$.
        \STATE Initialize $\m{M}_0:=\begin{bmatrix}
                    \m{D}_0&-\m{B}_0^T\\
                    \m{B}_0&\m{C}_0
                \end{bmatrix}$ and $k=0$.
        \WHILE{$\nu>\epsilon$}
            \FOR{$j=m,m-1,...,1$}
                \STATE   $\m{V}^s\leftarrow\exp\left(t_j\begin{bmatrix}
                    2\beta \m{D}_k&-\m{B}_k^T\\
                    \m{B}_k&\m{C}_k
                \end{bmatrix}\right)\exp\left(t_j\begin{bmatrix}
                    (1-2\beta) \m{D}_k&\m{0}\\
                    \m{0}&\m{0}
                \end{bmatrix}\right)$
                \IF{$j==m$}
                    \STATE $\m{W}\leftarrow\m{V}-\m{V}^s$
                    \STATE $\nu \leftarrow \|\m{\Delta}\|_\mathrm{F}$
                \ENDIF
                \STATE $\m{M}^s\leftarrow \mathrm{skew}((\m{V}^s)^T\m{W})$\hspace{4cm}\text{\#Project $\m{W}$ on $T_{\m{V}^s}\mathrm{SO}(2p)$.}
                \STATE $\m{W}\leftarrow \m{V}^s\m{M}^s\cdot\frac{\nu}{\|\m{M}^s\|_\mathrm{F}}$
            \ENDFOR
            \STATE $\m{M}_k\leftarrow\m{M}_k + \m{M}^s$, $k= k+1$.
        \ENDWHILE
        \RETURN $\m{D}_k$
    \end{algorithmic}
\end{algorithm}

 \section{Logistic model fitting}\label{app:logisticmodel}
 We obtain a probabilistic radius of convergence of~\cref{alg:generalizedStiefel} from numerical experiments. First, define a function $\mathcal{X}_\beta:\mathrm{St}(n,p)\times\mathrm{St}(n,p)\mapsto\{0,1\}$ where
\begin{equation*}
    \mathcal{X}_\beta(\m{U},\widetilde{U})=\begin{cases}
        1 \text{ if~\cref{alg:generalizedStiefel} converged with $(\m{U},\widetilde{U}, \beta)$ as input.}\\
        0 \text{ otherwise.}
    \end{cases}
\end{equation*}
Given $\beta>0$ and $N$ samples $\{U_i,\widetilde{U}_i\}_{i\in\{1...,N\}}$ drawn from a continuous distribution on $\mathrm{St}(n,p)$, we build a data set of pairs 
\begin{equation*}
    \{x_i,y_i\}_{i\in\{1...,N\}}:=\{\|U_i-\widetilde{U}_i\|_\mathrm{F},\ \mathcal{X}_\beta(U_i,\widetilde{U}_i)\}_{i\in\{1...,N\}}.
\end{equation*} 
We expect~\cref{alg:generalizedStiefel} to converge when $\|U_i-\widetilde{U}_i\|_\mathrm{F}$ is small and failures to appear  when $\|U_i-\widetilde{U}_i\|_\mathrm{F}$ gets larger. This framework is natural to fit a logistic regression model $m_\theta:\mathbb{R}\mapsto(0,1)$, where $\theta:=[\theta_0,\ \theta_1]\in \mathbb{R}^2$ is the fitting parameter. The logistic model predicts the probability of convergence of~\cref{alg:generalizedStiefel}. It is given by
\begin{equation*}
    m_\theta(x)=\frac{1}{1+\exp(\theta_0+\theta_1x)}.
\end{equation*}
The estimator $\theta$ is chosen to maximize the likelihood of the data set, i.e.,
\begin{equation*}
    \theta:=\mathrm{arg}\max_{\theta\in\mathbb{R}^2} \prod_{i=1}^N m_\theta(x_i)^{y_i}(1-m_\theta(x_i))^{(1-{y_i})}.
\end{equation*}
{\new For reference, see, e.g., \cite{Maalouf11}.} It is well-known that it is easier and equivalent to obtain the log-likelihood estimator
\begin{equation}\label{eq:loglikelihood}
    \theta:=\mathrm{arg}\max_{\theta\in\mathbb{R}^2} f(\theta):=\mathrm{arg}\max_{\theta\in\mathbb{R}^2}\sum_{i=1}^N \log(m_\theta(x_i))y_i+\log(1-m_\theta(x_i))(1-{y_i}).
\end{equation}
Equation \eqref{eq:loglikelihood} is a Lipschitz-smooth convex optimization problem and we solve it using the accelerated gradient method \cite{Nesterov1983AMF} with stopping criterion $\|\nabla f(\theta)\|_\mathrm{F}<10^{-8}$. We considered $N=1000$ in our experiments. The goodness of fit is confirmed by the high coefficients of determination $R^2$ provided in the caption of~\cref{fig:logisticregression}.

\end{document}